\documentclass[11pt]{article}

\usepackage{amsmath,amssymb,amsthm,amscd,eucal}
\usepackage{latexsym}
\usepackage{color}
\usepackage{enumerate}

\newtheorem{thm}[equation]{Theorem}
\newtheorem{cor}[equation]{Corollary}
\newtheorem{lemma}[equation]{Lemma}
\newtheorem{prop}[equation]{Proposition}

\newtheorem*{conj*}{Conjecture}
\theoremstyle{definition}
\newtheorem{defn}[equation]{Definition}

\newtheorem{remark}[equation]{Remark}
\newtheorem{exam}[equation]{Example}
\newtheorem{exams}[equation]{Examples}
\newtheorem{asspts}[equation]{Assumptions}
 
\numberwithin{equation}{section} 

\newcommand{\tor}{{\rm tor}}
\newcommand{\ann}{\mathsf{Ann}}

\newcommand{\ot}{\otimes}

\newcommand{\FF}{\mathbb{F}}  
\newcommand{\EE}{\mathsf{E}}  
\newcommand{\ZZ}{\mathbb{Z}}

\newcommand{\A}{\mathsf{A}} 
\newcommand{\BB}{\mathsf{B}}

\newcommand{\DD}{\mathsf{R}}

\newcommand{\LL}{\mathsf{L}} 
 
\newcommand{\M}{\mathsf{M}} 
\newcommand{\Lmg}{\LL(\m,\bar g)}
\newcommand{\Lmgb}{\LL(\m,z_\beta)}
\newcommand{\m}{\mathfrak{m}}
\newcommand{\N}{\mathsf{N}} 
\newcommand{\RR}{\mathsf{R}} 
\newcommand{\Sf}{\mathsf{S}}

\newcommand{\PR}{\mathsf{Q}}

\newcommand{\UU}{\mathsf{U}}  
\newcommand{\V}{\mathsf{V}}

\newcommand{\W}{\mathsf{W}}

\def \Max {\mathfrak{max}}  

\newcommand{\spann}{\mathsf{span}}

\newcommand\End{\mathsf {End}}

\newcommand\chara{\mathsf {char}}
\def\inder{\mathsf {Inder_\FF}}
\def\der{\mathsf {Der_\FF}}

\def\hoch{\mathsf{HH^1}}

\def\aut{\mathsf {Aut_\FF}}

\def\m{{\mathfrak m}}

\def\degg{\mathsf{deg\,}}
\def\cent1{\mathsf{Z}(\A_1)}
\def\centh{\mathsf{Z}(\A_h)}

\newcommand{\modd}{\mathsf{\, mod\,}}  

\usepackage[letterpaper,margin=1.48in]{geometry}

\begin{document}
\title{A Parametric Family of Subalgebras of the Weyl Algebra \\ 
II. Irreducible Modules}
\author{Georgia Benkart, Samuel A.\ Lopes\thanks{Research funded by the European Regional Development Fund through the programme COMPETE and by the Portuguese Government through the FCT -- Funda\c c\~ao para a Ci\^encia e a Tecnologia under the project PEst-C/MAT/UI0144/2011.}, and Matthew Ondrus}
\date{}
\maketitle

\vspace{-.25 truein}  
\begin{abstract}{An Ore extension over a polynomial algebra $\FF[x]$ is either a quantum plane, a quantum Weyl algebra, or an infinite-dimensional unital associative algebra $\A_h$ generated by elements $x,y$,  which satisfy $yx-xy = h$, where $h\in \FF[x]$.  When $h \neq 0$, the algebras $\A_h$  are subalgebras of the Weyl algebra $\A_1$ and can be viewed as differential operators with polynomial coefficients.   In previous work,  we studied the structure of $\A_h$ and determined its automorphism group $\aut(\A_h)$  and the subalgebra of invariants  under $\aut(\A_h)$.      Here we determine the irreducible $\A_h$-modules.   In a sequel to this paper, we completely describe the derivations of $\A_h$ over any field.} \end{abstract}

\begin{section}{Introduction}\end{section}  
In  \cite{BLO1},  we investigated a family of infinite-dimensional unital associative algebras $\A_h$ parametrized by a polynomial
$h$ in one variable, whose definition is given as follows:
\medskip

\begin{defn}
Let $\FF$ be a field, and let $h \in \FF[x]$.  The algebra  $\A_h$ is  the unital associative algebra over $\FF$ with generators $x$, $y$ and defining relation $yx = xy + h$ (equivalently,  $[y,x] = h$  where $[y,x] = yx-xy$). 
\end{defn}

These algebras arose naturally in the context of Ore extensions over a polynomial algebra $\FF[x]$. Recall that an Ore extension $\A = \DD[y,\sigma, \delta]$  is built from a unital associative (not necessarily commutative) algebra $\DD$ over a field $\FF$, an $\FF$-algebra endomorphism $\sigma$ of $\DD$, and a $\sigma$-derivation of $\DD$, where by a  $\sigma$-derivation $\delta$,  we mean that  $\delta$ is $\FF$-linear and $\delta (rs) = \delta(r)s + \sigma (r) \delta (s)$ holds for all $r,s \in \DD$.  Then  $\A = \DD[y, \sigma, \delta]$ is the algebra generated by $y$ over $\DD$ subject to the relation 
$$yr= \sigma(r)y + \delta(r) \qquad \hbox{\rm for all} \ r \in \DD.$$   Many algebras can be realized as iterated Ore extensions,  and for that reason, Ore extensions  have become a mainstay in associative theory.   Ore extensions inherit 
properties from the underlying algebra $\DD$.   
 For instance,  when $\sigma$ is an automorphism,   then  $\A$ is a free left and right $\DD$-module with basis $\{ y^n  \mid n \ge 0 \}$;  if $\DD$ is left (resp.~right) Noetherian, then $\A$ is left (resp.~right) Noetherian; and 
if $\DD$ is a domain, then $\A$ is a domain.  

The Ore extensions with  $\DD= \FF[x]$ and $\sigma$ an automorphism  have the following description (compare  \cite{AVV87} and \cite{AD97} for a somewhat different division into cases). 
 \begin{lemma}\label{lem:poly}  Assume $\A = \DD[y,\sigma,\delta]$ is an Ore extension with $\DD = \FF[x]$,
a polynomial algebra over a field $\FF$ of arbitrary characteristic, and $\sigma$ an automorphism of $\DD$.   Then $\A$ is isomorphic
to one of the following:
\begin{itemize}
\item[{\rm (a)}] a  quantum plane
\item[{\rm (b)}] a quantum Weyl algebra
\item[{\rm (c)}] a unital associative algebra $\A_h$  with generators $x,y$ and defining relation
$yx = xy + h$ for some polynomial $h \in \FF[x]$.  
\end{itemize}
\end{lemma}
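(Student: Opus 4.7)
The plan is to determine the Ore data $(\sigma,\delta)$ explicitly and then bring the defining relation into one of the three stated normal forms by linear substitutions in the generators. Since $\sigma$ is an $\FF$-algebra automorphism of $\FF[x]$, the image $\sigma(x)$ must be a polynomial whose $\FF$-algebra image generates $\FF[x]$, which forces $\sigma(x) = \alpha x + \beta$ with $\alpha \in \FF^{\times}$ and $\beta \in \FF$. The twisted Leibniz rule then determines $\delta$ uniquely from its value $\delta(x) = p(x) \in \FF[x]$.

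Next I would split by the value of $\alpha$. If $\alpha \neq 1$, the affine change of variable $x \mapsto x - \beta/(1-\alpha)$ converts $\sigma$ into the scaling $x \mapsto \alpha x$. A further substitution $y \mapsto y + g(x)$ then modifies the relation by adding $(1-\alpha)x\,g(x)$ to $p(x)$, and since $\alpha \neq 1$ we can choose $g$ to cancel every non-constant term of $p$, leaving $\delta(x) = c \in \FF$. If $c = 0$, the relation $yx = \alpha xy$ is that of a quantum plane; if $c \neq 0$, rescaling $y$ by $c^{-1}$ gives $yx = \alpha xy + 1$, a quantum Weyl algebra.

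When $\alpha = 1$ and $\beta = 0$, $\sigma$ is the identity, $\delta$ is an ordinary derivation of $\FF[x]$, and the relation $yx = xy + p(x)$ displays $\A$ as $\A_h$ with $h = p$. The more delicate case is $\alpha = 1$ with $\beta \neq 0$: here the substitution $y \mapsto y - p(x)/\beta$ (legal since $\beta \neq 0$) kills $\delta$ and produces $yx = (x+\beta)y$, i.e.\ $[y,x] = \beta y$. This does not look like $\A_h$ in the given variables, but exchanging the roles of the generators via $X := y$ and $Y := x$ rewrites the relation as $[Y,X] = -\beta X$, so $\A \cong \A_h$ with $h(X) = -\beta X$.

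I expect the main obstacle to be spotting this last reduction, which requires swapping the roles of the two generators so that the commutator becomes a polynomial in a single variable; the remaining cases are routine change-of-variable verifications that the substitutions yield $\FF$-algebra isomorphisms of Ore extensions. No assumption on $\chara \FF$ is required, because the only denominators that appear are $(1-\alpha)$ and $\beta$, which are non-zero precisely in the cases where they are used.
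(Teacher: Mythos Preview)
The paper does not actually prove this lemma; it is stated with a parenthetical reference to \cite{AVV87} and \cite{AD97} and then used. Your argument is the standard reduction and is essentially correct.

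One small slip: in the case $\alpha=1$, $\beta\neq 0$, the substitution that kills $\delta$ has the opposite sign. With $y'=y+g(x)$ one computes
\[
y'x=(x+\beta)y' + p(x)-\beta g(x),
\]
so the choice $g(x)=p(x)/\beta$ (hence $y'=y+p(x)/\beta$) gives $y'x=(x+\beta)y'$; your choice $y'=y-p(x)/\beta$ yields $y'x=(x+\beta)y'+2p(x)$ instead. After this correction, your swap $X:=y'$, $Y:=x$ indeed gives $[Y,X]=-\beta X$, and since both $\{x^m(y')^n\}$ and $\{(y')^nx^m\}$ are bases of the Ore extension, the identification with $\A_{-\beta X}$ is a genuine algebra isomorphism.
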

The algebra $\A_h$ is the Ore extension $\DD[y,\mathsf{id}_{\DD}, \delta]$ obtained from taking
 $\DD = \FF[x]$, $h \in \DD$, $\sigma =\mathsf{id}_\DD$, and $\delta: \DD \rightarrow \DD$ to be  the $\FF$-linear derivation with $\delta(r) = r'h$ for all $r \in \DD$,  where $r'$ denotes  the usual derivative of $r$ with respect to $x$.    In particular,  $[y,r] = \delta(r) = r' h$ for all $r \in \DD$.  The algebra  $\A_h$ is a Noetherian domain and a  free left and right $\DD$-module with basis $\{ y^n  \mid n \ge 0 \}$.  Both $\{x^m y^n \mid m,n \in \mathbb Z_{\geq 0}\}$ and  
 $\{y^n x^m  \mid m,n \in \mathbb Z_{\geq 0}\}$ are bases for $\A_h$, and $\A_h$ has Gelfand-Kirillov dimension 2. 
 
 Several well-known algebras have the form $\A_h$ for some $h \in \FF[x]$.  For example, $\A_0$ is the polynomial algebra $\FF[x,y]$; $\A_1$ is the Weyl algebra;  and the algebra $\A_x$ is the universal enveloping algebra of the two-dimensional non-abelian Lie algebra (there is only one such Lie algebra up to isomorphism).   The algebra $\A_{x^2}$ is often referred to as the Jordan plane.  It appears  in noncommutative algebraic geometry (see for example, \cite{SZ94} and \cite{AS95}) and exhibits many interesting features such as being Artin-Schelter regular of dimension 2.   In a series of articles \cite{shirikov05}--\cite{shirikov07-2},  Shirikov has undertaken an extensive study of the automorphisms, derivations,  prime ideals, and modules of the algebra $\A_{x^2}$.  Recent work of Iyudu \cite{I12} has further developed the
 representation theory of $\A_{x^2}$.   Cibils, Lauve, and Witherspoon \cite{CibLauWit09}  have constructed new examples of finite-dimensional Hopf algebras in prime characteristic which are Nichols algebras
using quotients of the algebra $\A_{x^2}$ and cyclic subgroups of their automorphisms.

Quantum planes and quantum Weyl algebras are examples of generalized Weyl algebras, and as such,  have been studied extensively.  There are striking similarities in the behavior of the algebras $\A_h$ as $h$ ranges over the polynomials in $\FF[x]$.  For that reason, we believe that studying them as one family provides much insight into their structure, automorphisms, derivations, and modules.   In \cite{BLO1}, we determined the center, normal elements, prime ideals,  and automorphisms of $\A_h$ and their invariants in $\A_h$.     
In \cite{BLO2},  we determine the derivations of an arbitrary algebra $\A_h$ over any field,  derive expressions for the Lie bracket in the quotient  $\hoch (\A_h) := \der (\A_h) / \inder (\A_h)$
of $\der(\A_h)$ modulo the ideal $\inder(\A_h)$ of inner derivations,  and use  these formulas to understand the structure of the Lie algebra $\hoch (\A_h)$.  In particular, when $\chara(\FF) = 0$, we construct a maximal nilpotent ideal of $\hoch (\A_h)$ and explicitly describe the structure of the corresponding quotient in terms of the Witt algebra (centreless Virasoro algebra) of vector fields on the unit circle.

Our aim in this paper is to give a detailed investigation of the modules for the algebras  $\A_h$ over
arbitrary fields.  In \cite{Bl81}, Block undertook a comprehensive study of the irreducible modules  for the Weyl algebra $\A_1$ and for the universal enveloping algebras of 
$\mathfrak{sl}_2$ and of the two-dimensional solvable Lie algebra  (which is the algebra $\A_x$)
over a field of characteristic zero. (Compare also \cite{AP} for the $\mathfrak{sl}_2$ case.)   Block also considered Ore extensions $\DD[y, \mathsf{id}, \delta]$ over a Dedekind domain $\DD$ of characteristic zero, with the main effort in  \cite{Bl81} directed
towards  investigating irreducible $\DD$-torsion-free modules.   Block's results were extended by Bavula in \cite{Bv99} to more general Ore extensions over Dedekind domains,  and by Bavula and vanOystaeyen in  \cite{BvO97}  to develop a representation theory for generalized 
Weyl algebras over Dedekind domains.  

The  generalized weight $\A_h$-modules over  fields of arbitrary characteristic will form the main focus of the present paper.  Included also will be results on indecomposable
$\A_h$-modules, on primitive ideals of $\A_h$ (that is, the annihilators of irreducible
$\A_h$-modules), and on some combinatorial connections as well.  \medskip

{\it Since the representation theory of polynomial algebras is well developed, we will
assume that $h \neq 0$ throughout the paper.}
\medskip

It is an easy consequence of the relation  $[y,r] = \delta(r)$ for $r\in \DD$  and induction that the following identity  holds in any Ore extension $\DD[y,\mathsf{id}_\DD,\delta]$ for all $n \geq 0$:  
\begin{equation}\label{eq:xynid}  ry^n  = \sum_{j = 0}^n   (-1)^j {n \choose j}  y^{n-j} \delta^j(r). \end{equation}
Using that identity, we obtained  the following description of the center of $\A_h$: 
\begin{thm}\label{T:center}{\rm \cite[Thm.~5.5]{BLO1}}   Let $\centh$ denote the center of $\A_h$.  
\begin{enumerate}
\item[{\rm (1)}]  If $\chara(\FF) = 0$, then $\centh = \FF 1$.
\item[{\rm (2)}]  If $\chara(\FF) = p > 0$, then  $\centh$ is isomorphic to the polynomial algebra $\FF[x^p, z_p]$, where  
\begin{equation}\label{eq:centgen} z_p :=   y (y+h')(y+2h') \cdots (y + (p-1)h') =  y^p -y \frac{\delta^p(x)}{h(x)}, \end{equation} and $'$ denotes the usual derivative.   Moreover $\frac{\delta^p(x)}{h(x)}  \in \centh \cap \FF[x] = \FF[x^p]$.
\end{enumerate}
\end{thm}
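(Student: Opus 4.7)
My plan is to exploit the free left $\FF[x]$-module basis $\{y^n \mid n \ge 0\}$ of $\A_h$ and analyze a central element $z = \sum_{n \ge 0} f_n(x)\, y^n$ separately under the constraints $[y, z] = 0$ and $[x, z] = 0$. Since $[y, r] = \delta(r) = r'h$ for $r \in \FF[x]$, the first constraint unfolds to $\sum_n f_n'(x)\, h\, y^n = 0$; because $\A_h$ is a domain with $h \ne 0$, this forces $f_n' = 0$ for every $n$. Thus $f_n \in \FF$ when $\chara(\FF) = 0$, and $f_n \in \FF[x^p]$ when $\chara(\FF) = p > 0$.

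For the second constraint, identity \eqref{eq:xynid} with $r = x$ gives $[x, y^n] = \sum_{j=1}^{n} (-1)^j \binom{n}{j}\, y^{n-j}\, \delta^j(x)$, and since $f_n \in \FF[x]$ commutes with $x$, $[x, z] = \sum_n f_n [x, y^n]$. If $N$ is the largest index with $f_N \ne 0$, the coefficient of $y^{N-1}$ in $[x, z]$ receives a contribution only from the $j = 1$ term at $n = N$, namely $-N f_N h$. Setting this to zero forces $N = 0$ when $\chara(\FF) = 0$ (completing part~(1)) or $p \mid N$ when $\chara(\FF) = p$.

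For part~(2), I first verify that $x^p$ and $z_p$ lie in $\centh$. Centrality of $x^p$ is immediate from $(x^p)' = 0$ in characteristic $p$. For $z_p$, an induction on $k$ shows $h \mid \delta^k(x)$ in $\FF[x]$ for every $k \ge 1$, so $g := \delta^p(x)/h \in \FF[x]$ is defined. To see $g \in \FF[x^p]$, I use that $\delta^p$ is a derivation of $\FF[x]$ in characteristic $p$ (a classical consequence of $\binom{p}{j} \equiv 0 \pmod p$ for $0 < j < p$), whence $\delta^p = gh \cdot \tfrac{d}{dx}$ as an operator on $\FF[x]$; comparing $\delta \circ \delta^p = \delta^p \circ \delta$ gives $h(gh)' = gh \cdot h'$, which simplifies to $h^2 g' = 0$ and forces $g' = 0$. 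Identity \eqref{eq:xynid} at $n = p$ with the vanishing of $\binom{p}{j}$ then yields $[y^p, x] = (-1)^{p-1}\delta^p(x)$, and short computations show both $[x, z_p] = 0$ and $[y, z_p] = -y g' h = 0$.

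To conclude $\centh = \FF[x^p, z_p]$, I argue by induction on the $y$-degree $N$ of $z \in \centh$. The previous paragraphs give $f_N \in \FF[x^p]$ and $p \mid N$. Using $y^p = z_p + yg$ with $g \in \FF[x^p]$ central, one verifies $y^{pk} \equiv z_p^k$ modulo terms of strictly smaller $y$-degree, so subtracting $f_N z_p^{N/p}$ from $z$ yields a central element of strictly smaller $y$-degree; iterating reduces to $N = 0$, at which point $z \in \FF[x^p]$. Algebraic independence of $x^p$ and $z_p$ follows from the PBW basis $\{x^m y^n\}$. Finally, the product formula $z_p = y(y + h')(y + 2h') \cdots (y + (p-1)h')$ is verified by a separate induction on the number of factors; the main obstacle here is that the factors $y + i h'$ fail to commute when $h'' \ne 0$, forcing careful tracking of the noncommutative expansion and repeated use of $[y, h'] = h'' h$.
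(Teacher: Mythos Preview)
The paper does not contain its own proof of this theorem: it is quoted verbatim from \cite[Thm.~5.5]{BLO1}, and only the brief Remark following it alludes to what that external proof establishes. So there is no in-paper argument to compare against.

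That said, your sketch is sound in its architecture. The commutator analysis of $[y,z]$ and $[x,z]$ via the free $\FF[x]$-basis $\{y^n\}$ correctly forces $f_n'=0$ and $p\mid N$, and the inductive reduction using $z_p^{k}=y^{pk}+(\text{lower order in }y)$ (valid because $y^p$ and $yg$ commute, $g$ being central) cleanly yields $\centh=\FF[x^p,z_p]$. Your argument that $g'=0$ via $\delta\circ\delta^p=\delta^p\circ\delta$ is efficient and correct.

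The one genuinely underdeveloped step is the product identity $y(y+h')\cdots(y+(p-1)h')=y^p-yg$. You correctly flag that the factors do not commute when $h''\neq 0$, but ``a separate induction on the number of factors'' is not yet a proof: the partial products $\prod_{i<k}(y+ih')$ have no evident closed form, and naive induction does not obviously close up. One workable route is to observe that for each $i$ the conjugation $x^{-i}(\cdot)x^{i}$ (formally, or after localizing when $h=x$) is unavailable here, so instead one can argue as follows: show that the product $P:=\prod_{i=0}^{p-1}(y+ih')$ commutes with $x$ by verifying $[x,P]=0$ via a telescoping computation using $[x,y+ih']= -h$ and $(y+ih')h = h(y+(i+1)h')$ (since $[y,h]=h'h$), which cycles the factors; once $P$ is known to commute with $x$ and has leading term $y^p$, your established description of $\centh\cap(\FF[x]\text{-span of }y^0,\dots,y^p)$ forces $P=y^p+cy$ for some $c\in\FF[x^p]$, and comparing the $y^1$-coefficient (or evaluating on a suitable module) pins down $c=-g$. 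Without something along these lines, the identity \eqref{eq:centgen} remains asserted rather than proved.
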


\begin{remark} The proof of  Theorem 5.5 in \cite{BLO1} shows that $y$ commutes with $\frac{\delta^p(x)}{h(x)}$, but
since  $\frac{\delta^p(x)}{h(x)}$  is a polynomial in $x$, it commutes with $x$ as well, hence is central in $\A_h$.   \end{remark} 

When  $\chara(\FF) = p > 0$,  it follows from Theorem~\ref{T:center} that  $\A_{h}$ is free of rank $p^{2}$ as a module over its center (see \cite[Prop.~5.9]{BLO1}). This implies that $\A_{h}$ is a polynomial identity ring (e.g.~\cite[Cor.\ 13.1.13\,(iii)]{McR01}).  Applying ~\cite[Thm.\ 13.10.3\,(i)]{McR01},  we can conclude the following:  

\begin{prop}\label{P:findimp}  Assume $\chara(\FF) = p > 0$.   Then all irreducible $\A_{h}$-modules are finite dimensional.  \end{prop}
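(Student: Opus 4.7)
The plan is to exploit the explicit description of the center in Theorem~\ref{T:center}(2) together with the cited fact that $\A_h$ is free of rank $p^2$ over $\Z(\A_h) \cong \FF[x^p, z_p]$, bypassing any direct appeal to Kaplansky's or Posner's theorem from general PI theory. The strategy is to show that for any irreducible $\A_h$-module $M$, the annihilator of $M$ in $\Z(\A_h)$ is a maximal ideal, and then to apply Zariski's lemma to conclude that the corresponding residue field is finite over $\FF$.

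First, I would let $M$ be irreducible, fix a nonzero $m \in M$, and observe that $M = \A_h m$; since $\{x^i y^j \mid 0 \le i, j < p\}$ is a basis of $\A_h$ over $\Z(\A_h)$, the module $M$ is generated over $\Z(\A_h)$ by the $p^2$ vectors $x^i y^j m$. Setting $J := \ann_{\Z(\A_h)}(M)$ and $R := \Z(\A_h)/J$, I would note that for each $z \in \Z(\A_h)$ left multiplication by $z$ is an $\A_h$-module endomorphism of $M$, so its kernel and image are $\A_h$-submodules; by irreducibility, $z$ either annihilates $M$ or acts bijectively. In particular, $J$ is prime and $R$ is an integral domain. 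For any maximal ideal $\m$ of $R$, Nakayama's lemma applied to the finitely generated module $M$ over $R$ yields $\m M \neq M$, while $\m M$ is an $\A_h$-submodule of $M$ and therefore equals $0$. This forces $\m \subseteq \ann_R(M) = 0$, so that $R$ is a field.

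To conclude, I would observe that $R$ is an affine $\FF$-algebra (as a quotient of $\Z(\A_h) \cong \FF[x^p, z_p]$) that happens to be a field, hence Zariski's lemma (the weak Nullstellensatz) gives $[R:\FF] < \infty$. Since $M$ is a finitely generated module over the field $R$, it has $R$-dimension at most $p^2$, and $\dim_\FF M \le p^2 \cdot [R:\FF] < \infty$ follows. I do not expect any genuine obstacle; the only mildly delicate step is the combination of Nakayama's lemma with the simplicity of $M$ that collapses the central annihilator to a maximal ideal, and this is exactly where the positive characteristic assumption enters, since in characteristic zero Theorem~\ref{T:center}(1) gives $\Z(\A_h) = \FF\cdot 1$ and $\A_h$ is no longer finite over its center.
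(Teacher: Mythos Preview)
Your proof is correct and takes a genuinely different route from the paper's. The paper does not argue directly: it observes that $\A_h$ is free of rank $p^2$ over its center, invokes \cite[Cor.~13.1.13(iii)]{McR01} to conclude that $\A_h$ is a polynomial identity ring, and then appeals to \cite[Thm.~13.10.3(i)]{McR01} (essentially Kaplansky's theorem) to deduce that every irreducible module is finite dimensional. You start from the same input---module-finiteness over the center---but replace the PI machinery with an elementary commutative-algebra argument: Schur-type reasoning shows each central element acts as zero or bijectively; the general (Cayley--Hamilton) form of Nakayama together with faithfulness of $R = \centh/J$ on $\M$ forces every maximal ideal of $R$ to be zero, so $R$ is a field; and Zariski's lemma then gives $[R:\FF] < \infty$. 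The paper's approach is shorter if one is willing to cite the PI theorems as black boxes; yours is more self-contained and makes transparent exactly where finiteness comes from. One small point of presentation: the version of Nakayama you need is the one valid for an arbitrary ideal (if $\m M = M$ with $M$ finitely generated, then $(1+r)M = 0$ for some $r \in \m$), not the local version, since a priori $R$ is only a domain---but your conclusion is unaffected, as faithfulness of $R$ on $M$ immediately rules out $\m M = M$.
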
 

\medskip

In Section 2, we review basic facts about modules for Ore extensions over Dedekind domains.   Our approach here follows \cite{Bl81} (see also \cite{Bv99} for results for more general Ore extensions).  For such Ore extensions, the irreducible modules are either generalized weight modules relative to $\DD$ (equivalently, have $\DD$-torsion), or are $\DD$-torsion-free.   We show in Section 3 that for any field $\FF$,  when $h \not \in \FF^*$, the algebra $\A_h$ has  a family of  indecomposable modules of arbitrarily large dimension.  Section 4 is devoted to generalized weight modules for $\A_h$.   In particular, we consider induced generalized weight modules  for $\A_h$, which play a role analogous to Verma modules in the representation theory of semisimple Lie algebras, and also finite-dimensional irreducible modules for $\A_h$.  In Section 5, we determine  the primitive ideals of $\A_h$.  Corollary \ref{C:annchar0} gives an $\A_h$-version  of Duflo's well-known result \cite[Cor.~1]{Du77} on the primitive ideals of enveloping algebras of complex semisimple Lie algebras.

Section 6 is dedicated to the $\chara(\FF) = 0$ case.  Corollary \ref{cor:genwt0} of that section  shows  that the  irreducible generalized weight modules for $\A_h$  are either induced modules or  finite-dimensional quotients of them (compare \cite[Prop.~4.1]{Bl81}).   The classification of the irreducible  generalized weight modules for $\A_h$  when $\FF$ is algebraically closed of characteristic zero  is given in  Corollary  \ref{cor:one-dim}. Part (i) of that corollary may be regarded as the analogue of Lie's theorem for the algebras $\A_h$, and  in fact,  it is Lie's theorem for $\A_x$.  In Section 6.3, we investigate irreducible $\DD$-torsion-free $\A_h$-modules when $\chara(\FF) = 0$ and determine a criterion for when an irreducible $\DD$-torsion-free module for the Weyl algebra $\A_1$ restricts to one for $\A_h$.  When $\chara(\FF) = p > 0$, all irreducible modules are finite dimensional, so $\DD$-torsion-free irreducible modules only exist when $\chara(\FF) = 0$.   When $\FF$ is an algebraically closed field of characteristic $p > 0$, we show in Section 7 that the irreducible $\A_h$-modules have dimension 1 or $p$ and give an explicit description of them in Corollary \ref{C:algclo}.   The expressions for the $\A_h$-action on irreducible modules often entail terms of the form $\delta^k(x)$.   Section 8 presents some interesting combinatorics for these terms phrased  in the language of partitions.

\begin{section}{Modules for Ore Extensions}\end{section}

Assume $\A  = \DD[y,\sigma,\delta]$ is an Ore extension with  $\DD$ a Dedekind domain.  
Let $\EE$ denote  the field of fractions of $\DD$.    Thus $\EE = \Sf^{-1} \DD$ where $\Sf = \DD \setminus \{0\}$.   The localization
$\BB = \Sf^{-1}\A$ is the Ore extension $\BB = \EE[y,\sigma,\delta]$,  where $\sigma$ and $\delta$ have natural extensions to $\EE$.  

Given an $\A$-module $\M$, 
$\tor_{\RR} (\M):=\{ v\in\M \mid r v=0 \ \hbox{\rm for some}\ 0\neq r\in\DD\}$
is an $\A$-submodule called the $\DD$-{\it torsion submodule} of $\M$.  We say that $\M$ is an $\DD$-{\it torsion} (resp.\ $\DD$-{\it torsion-free}) module  if $\tor_{\RR} (\M)=\M$ (resp.\ $\tor_{\RR} (\M)=0$).
If $\M$ is irreducible, then $\Sf^{-1}\M = \BB \otimes_\A \M$ is either 0 or a nonzero
irreducible $\BB$-module.    In the former case, $\M$ has  $\DD$-torsion, and in the latter, $\M$ is  $\DD$-torsion-free. 
Thus, the set $\widehat \A$ of  isomorphism classes  of irreducible $\A$-modules decomposes into two
disjoint subsets,

$$\widehat \A =  \widehat \A (\DD\hbox{-}\mathsf{torsion}) \cup \widehat \A(\DD\hbox{-}\mathsf{torsion\hbox{-}free}).$$

Assume $\M$ is an $\A$-module.   For any ideal $\mathfrak n$  of $\DD$, let 
\begin{equation}\label{eq:wtspaces}  \M_{\mathfrak n} =  \{ v \in \M \mid  \mathfrak n v = 0\}  \quad 
\hbox{\rm and} \quad \M^{\mathfrak n} = \{ v \in \M \mid  {\mathfrak n}^kv = 0 \ \hbox{\rm for some}\  k = k(v)\}.
\end{equation} 

Let $\Max(\DD)$ denote the set of maximal ideals of $\DD$.   An $\A$-module $\M$  is said to be an {\it $\mathsf{R}$-weight module}   (resp. {\it $\mathsf{R}$-generalized weight module})    if  
$\M = \bigoplus_{\mathfrak n \in \Max(\DD)} \M_{\mathfrak n}$ (resp. if $\M = \bigoplus_{\mathfrak n \in \Max(\DD)} \M^{\mathfrak n}$).   When $\DD$ is a Dedekind domain,  the irreducible $\DD$-torsion modules are precisely the irreducible $\DD$-generalized weight modules. 
We present  a proof of this fact next  (compare the arguments in  \cite[Proof of Prop.~4.1]{Bl81} and also \cite [Sec.~4]{Bv99}). 
\medskip

\begin{prop}\label{prop:genwt}
Suppose that $\A = \DD[y,\mathsf{id}_\DD,\delta]$ is an Ore extension with  $\DD$ a Dedekind domain.
\begin{itemize} 
\item[{\rm (i)}]   If $\V$ is an $\A$-module such that $\V = \A u$ for  $u \in \V^{\m}$ and some ideal $\m$ of $\DD$,  then $\V = \V^{\m}$. 
Moreover, if   $\m$ is  $\delta$-invariant and  $u \in \V_\m$, then $\V = \V_\m$. 
\item[{\rm (ii)}] $\widehat \A (\DD\hbox{-}\mathsf{torsion}) =  \widehat \A (\DD\hbox{-}\mathsf{generalized \ weight}).$
\item[{\rm (iii)}]  If $\V$ is an irreducible $\DD$-torsion $\A$-module,  then  $\V = \V^\m$ for some
$\m \in\Max(\DD)$, and when $\m$ is $\delta$-invariant, $\V = \V_\m$.  
\end{itemize}
\end{prop}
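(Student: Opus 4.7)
My plan for part (i) starts from the observation that, because $\sigma = \mathsf{id}_\DD$, identity \eqref{eq:xynid} reads $r y^n = \sum_{j=0}^n (-1)^j \binom{n}{j} y^{n-j} \delta^j(r)$ for all $r \in \DD$. Since $\delta$ is a derivation, the Leibniz rule yields $\delta(\m^N) \subseteq \m^{N-1}$, and by iteration $\delta^j(\m^N) \subseteq \m^{N-j}$ whenever $j \le N$. Assuming $\m^k u = 0$, I will prove that $\m^{k+n} y^n u = 0$ for every $n \ge 0$: for $s \in \m^{k+n}$,
\[
s y^n u = \sum_{j=0}^n (-1)^j \binom{n}{j} y^{n-j} \delta^j(s) u,
\]
and each $\delta^j(s) \in \m^{k+n-j} \subseteq \m^k$ annihilates $u$, so the sum vanishes. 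Since $\V = \A u$ is the $\DD$-span of $\{ y^n u : n \ge 0 \}$ (using that $\A$ is free over $\DD$ with basis $\{ y^n \}$) and $\V^{\m}$ is visibly a $\DD$-submodule, this forces $\V = \V^{\m}$.

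For the moreover clause of (i), assuming $\m$ is $\delta$-invariant and $u \in \V_\m$, I apply \eqref{eq:xynid} with $sr$ in place of $r$, for arbitrary $s \in \m$ and $r \in \DD$. Then $sr \in \m$, and $\delta$-invariance gives $\delta^j(sr) \in \m$ for every $j$, so $\delta^j(sr) u = 0$ and
\[
s(r y^n u) = (sr) y^n u = \sum_{j=0}^n (-1)^j \binom{n}{j} y^{n-j} \delta^j(sr) u = 0.
\]
Since every element of $\V = \A u$ is a $\DD$-linear combination of the $y^n u$, this yields $\m \cdot \V = 0$, whence $\V = \V_{\m}$.

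For (iii), I pick $0 \ne u \in \V$. Because $\V$ is $\DD$-torsion, $\ann_\DD(u)$ is a nonzero ideal of the Dedekind domain $\DD$, so I may factor $\ann_\DD(u) = \m_1^{e_1} \cdots \m_t^{e_t}$ with distinct $\m_i \in \Max(\DD)$. The Chinese Remainder Theorem then produces a decomposition $u = u_1 + \cdots + u_t$ with $\m_i^{e_i} u_i = 0$, i.e., each $u_i \in \V^{\m_i}$. Picking any $u_i \ne 0$, irreducibility gives $\V = \A u_i$, and part (i) yields $\V = \V^{\m_i}$. When $\m := \m_i$ is $\delta$-invariant, let $\ell \ge 1$ be minimal with $\m^\ell u_i = 0$ and pick $r \in \m^{\ell-1}$ with $r u_i \ne 0$; then $0 \ne r u_i \in \V_\m$, so irreducibility together with the moreover clause of (i) gives $\V = \A(r u_i) = \V_\m$. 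Assertion (ii) is then immediate: (iii) shows every irreducible $\DD$-torsion module is a (trivial) $\DD$-generalized weight module, while every $\DD$-generalized weight module is manifestly $\DD$-torsion.

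I do not anticipate any serious obstacle. The crux is the observation $\delta^j(\m^N) \subseteq \m^{N-j}$, which together with \eqref{eq:xynid} reduces the $\m$-power bookkeeping in (i) to a one-line sum. The Dedekind hypothesis is invoked only in (iii), through the prime factorization of ideals and the Chinese Remainder Theorem.
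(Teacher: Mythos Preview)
Your proof is correct and, for part (i), essentially identical to the paper's: both use the Leibniz estimate $\delta^j(\m^N)\subseteq\m^{N-j}$ together with identity \eqref{eq:xynid} to show $\m^{k+n}$ kills $y^n\DD u$.

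For part (iii) you take a slightly different route. The paper argues that, since $\DD$ is Noetherian, the set $\{\ann_\DD(v):0\neq v\in\V\}$ has a maximal element $\mathfrak p=\ann_\DD(u)$, which is then prime (hence maximal in a Dedekind domain); this immediately yields $u\in\V_{\mathfrak p}$, so the $\delta$-invariant case follows directly from the ``moreover'' clause of (i) with no further work. Your approach---factor $\ann_\DD(u)$ and use the Chinese Remainder Theorem to extract a nonzero $u_i\in\V^{\m_i}$---is equally valid but lands you only in $\V^{\m}$, so you need the extra step of passing to $ru_i\in\V_\m$ to handle the $\delta$-invariant case. Both arguments are standard; the paper's is marginally slicker here, while yours makes the role of the Dedekind hypothesis (unique factorization of ideals) more transparent.
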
 

\begin{proof}   Let $\m$ be an ideal of $\DD$ and suppose $\ell \geq 1$. Then by Leibniz's rule,   $\delta(\m^\ell) \subseteq \m^{\ell-1}$  (where $\m^0 = \DD)$.  In case $\m$ is $\delta$-invariant, then $\delta(\m^\ell) \subseteq \m^{\ell}$. 

(i) Assume $\V=\A u$ and $\m^k u = 0$ for some $k \geq 1$. Then by \eqref{eq:xynid}, we have 
$$
\m^{k+n} y^{n}\DD u \subseteq  \sum_{j = 0}^n    y^{n-j} \delta^j(\m^{k+n}) \DD u 
\subseteq  \sum_{j = 0}^n    y^{n-j}  \m^{k+n-j}  u =0.
$$
Thus,  $y^{n}\DD u \subseteq \V^{\m}$ for all $n\geq 0$, which proves that $\V=\A u=\V^{\m}$ (and hence that $\V$ is an $\mathsf{R}$-generalized weight  module if $\m \in \Max(\DD)$).   If $\m$ is $\delta$-invariant, then $\m^{k} y^{n}\DD u \subseteq \sum_{j = 0}^n    y^{n-j}  \m^{k}  u =0$.  Therefore, if $u\in\V_{\m}$,  we can take $k=1$ and obtain $\V=\V_{\m}$, (so that $\V$ is an $\mathsf{R}$-weight module if $\m \in \Max(\DD)$). 

It remains to prove (ii), and then (iii) will be a consequence of that and (i).  The inclusion $$\widehat \A (\DD\hbox{-}\mathsf{generalized \ weight})\subseteq \widehat \A (\DD\hbox{-}\mathsf{torsion})$$ is clear, so we show that if $\V$ is an irreducible $\DD$-torsion $\A$-module,  then $\V$ is an ${\mathsf R}$-generalized weight module. Since $\DD$ is Noetherian, the set
$\left\{ \ann_{\DD}(v) \mid 0\neq v\in\V \right\}$
has a maximal element $\mathfrak p=\ann_{\DD}(u)$, which is nonzero, as $\V$ has $\DD$-torsion. The maximality condition implies that $\mathfrak p$ is a prime ideal of $\DD$. Indeed, if $ab\in\mathfrak p$ and $b\notin\mathfrak p$, then $\mathfrak p=\ann_{\DD}(u)\subseteq\ann_{\DD}(bu)$, so $a \in \ann_{\DD}(bu) =\mathfrak p$.  As $\mathfrak p\neq 0$, $\mathfrak p$ is a maximal ideal of the Dedekind domain $\DD$. Thus, $u\in\V_{\mathfrak p}$ and $\V=\A u=\V^{\mathfrak p}$, by irreducibility and the first part of the proof.  \end{proof} 

\begin{remark} In the remainder of the paper, we will simply say weight module and generalized weight module
with the understanding that always they are with respect to $\DD$. \end{remark}

\begin{lemma} \label{lem: construct1}   Assume   $\A = \DD[y,\mathsf{id}_\DD,\delta]$ is an Ore extension with  $\DD$ a Dedekind domain.    Let  $\m$ be any $\delta$-invariant  ideal of $\DD$,  and let $q$ be a fixed element of $\DD$.   Then the following hold.   
\begin{itemize} 
\item[{\rm (i)}]   The space $\N(\m,q):= \DD/\m$ with the action 
$$s.(r+ \m) = sr+ \m,  \qquad \ \    y.(r+\m) =( qr +  \delta(r)) +\m,$$
for $r,s \in \DD$,   is an $\A$-module.  The $\A$-submodules of $\N(\m,q)$ are the submodules of the form $\mathfrak p/\m$ where $\mathfrak p$ is a $\delta$-invariant ideal of $\DD$ containing $\m$.
\item[{\rm (ii)}]  If $\m$ is a maximal ideal of $\DD$,   then $\N(\m,q) = \N(\m,q)_\m$ is an irreducible weight module. 
\item[{\rm (iii)}] If $\m$ is a maximal ideal of $\DD$ and $n\geq 1$, then $\N(\m^n, q) = \N(\m^n,q)^\m$ is a generalized weight  $\A$-module and  it is uniserial (its submodules are linearly ordered by inclusion), hence it is indecomposable.
\item[{\rm (iv)}]  Assume $\chara(\FF) = 0$.  If $\m$ is a maximal ideal of $\DD$, then
$$\bigcap_{n \geq 1}  \ann_\A\big(\N(\m^n,q)\big) = (0).$$
In particular, if $\DD$ is a finitely generated $\FF$-algebra (e.g. if $\DD = \FF[x]$),   then $\A$ is residually 
finite dimensional (that is to say,  there is a family of ideals of $\A$ of finite co-dimension having trivial intersection).  
\end{itemize}  
\end{lemma}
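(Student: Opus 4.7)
The plan is to verify parts (i)--(iii) by direct computation combined with Dedekind-domain structure, and then reduce part (iv) to showing that $\DD$, equipped with a natural $T$-action of $y$, is a faithful $\A$-module.

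For (i), the action is well-defined on $\DD/\m$ because $\m$ is $\delta$-invariant, so both left multiplication by $s \in \DD$ and the operator $T := q + \delta$ preserve $\m$; the only nontrivial Ore relation, $y \cdot (s \cdot v) = s \cdot (y \cdot v) + \delta(s) \cdot v$, is a one-line consequence of the Leibniz rule. Any $\DD$-submodule of $\DD/\m$ has the form $\mathfrak{p}/\m$ for an ideal $\mathfrak{p} \supseteq \m$, and since $q\mathfrak{p} \subseteq \mathfrak{p}$ automatically, $y$-invariance reduces to $\delta(\mathfrak{p}) \subseteq \mathfrak{p}$. Parts (ii) and (iii) now follow: for (ii), the only ideals containing the maximal $\m$ are $\m$ and $\DD$, both $\delta$-invariant; for (iii), the ideals of the Dedekind domain $\DD$ between $\m^n$ and $\DD$ are exactly $\m^k$ for $0 \leq k \leq n$, each $\delta$-invariant by iterated Leibniz from $\delta(\m) \subseteq \m$, giving a length-$n$ chain that is simultaneously the unique composition series, so $\N(\m^n,q)$ is uniserial and hence indecomposable.

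For (iv), I would first observe that $\DD$ itself is an $\A$-module under multiplication by $\DD$ and the $y$-action $T$, and the natural surjections $\DD \twoheadrightarrow \DD/\m^n = \N(\m^n,q)$ are $\A$-linear. If $a$ annihilates every $\N(\m^n,q)$, then $a \cdot \DD \subseteq \bigcap_n \m^n = 0$ by the Krull intersection theorem for the Dedekind domain $\DD$, so $a$ annihilates $\DD$, reducing (iv) to the faithfulness of the $\A$-module $\DD$. Writing $a = \sum_{i=0}^N r_i y^i$ with $r_N \neq 0$ and supposing $\sum r_i T^i \equiv 0$ on $\DD$, I would pass to the $\m$-adic completion $\widehat{\DD}_\m$; in characteristic zero Cohen's structure theorem identifies it with $K[[t]]$, where $K = \DD/\m$ and $t$ is a uniformizer, while $\delta$-invariance gives $\delta(t) \in (t)$. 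A leading-$\m$-adic-order analysis of the identity on test elements $1, t, t^2, \ldots$ should then force $r_N = 0$, a contradiction; the characteristic-zero hypothesis enters to prevent the binomial-type combinatorial factors arising from expanding $T^i$ from vanishing. For the residual-finite-dimensionality addendum, finite generation of $\DD$ combined with the Nullstellensatz gives $\dim_\FF \DD/\m < \infty$, hence $\dim_\FF \N(\m^n,q) < \infty$ for each $n$ via the filtration $\m^k/\m^{k+1}$, so each $\ann_\A(\N(\m^n,q))$ has finite codimension in $\A$ and their intersection is zero by the preceding faithfulness.

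The main obstacle is the leading-order calculation in $\widehat{\DD}_\m$: both $q$ and $\delta(t)$ may vanish to high order along $\m$, potentially degrading the leading term, so the characteristic-zero hypothesis must be deployed carefully to ensure that the coefficient of the top $t$-power in $\sum r_i T^i(t^k)$ genuinely isolates $r_N$ rather than being killed by combinatorial cancellations.
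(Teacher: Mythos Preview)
Your treatment of (i)--(iii) is essentially the paper's argument: well-definedness from $\delta$-invariance, submodules classified by $\delta$-invariant ideals above $\m$, and the Dedekind chain $\m^0 \supsetneq \m \supsetneq \cdots \supsetneq \m^n$ giving uniseriality.

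For (iv), your reduction to faithfulness of $\DD$ as an $\A$-module (via the $\A$-linear surjections $\DD \twoheadrightarrow \N(\m^n,q)$ and Krull intersection) is correct and is a clean reformulation. The divergence is in how you then establish faithfulness. Your proposed route through the $\m$-adic completion and a leading-order analysis in $K[[t]]$ is substantially more delicate than necessary, and as you yourself flag, the obstacle is real: $q$ and $\delta(t)$ can vanish to arbitrary order along $\m$, the Cohen section need not interact well with $\delta$, and the induced derivation on $K$ may itself be nonzero, so isolating $r_N$ from test elements $t^k$ is not straightforward.

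The paper avoids all of this with a one-line commutator argument carried out in $\A$ rather than on $\DD$: given a nonzero two-sided ideal $\mathsf J$, pick $a = \sum_{i=0}^k y^i s_i \in \mathsf J$ of minimal $y$-degree; for any $r$ with $\delta(r) \neq 0$ one has $[r,a] = -k\, y^{k-1}\delta(r)s_k + (\text{lower}) \in \mathsf J$, and since $\chara(\FF)=0$ this forces $k=0$, i.e.\ $a \in \mathsf J \cap \DD$. Applying this to $\mathsf J = \bigcap_n \ann_\A(\N(\m^n,q))$ gives a nonzero element of $\mathsf J \cap \DD \subseteq \bigcap_n \m^n = 0$, a contradiction. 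The same argument, applied to $\mathsf J = \ann_\A(\DD)$, instantly yields the faithfulness you were aiming for (any nonzero element of $\ann_\A(\DD)\cap\DD$ would kill $1\in\DD$), so this commutator trick would also close your version of the proof without any completion. The characteristic-zero hypothesis enters exactly once, to ensure $k \neq 0$ in $\FF$, which is the precise role you anticipated for it.
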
    
\begin{proof}  We leave the verification that $\N(\m,q)$ is an $\A$-module as an exercise for  the reader. It is clear for any $\delta$-invariant ideal $\mathfrak p$ of $\DD$ containing $\m$ that $\mathfrak p/\m$ is an $\A$-submodule of $\N(\m,q)$. Conversely, any 
$\A$-submodule of $\N(\m,q)$ is necessarily an $\DD$-submodule of $\DD/\m$, and thus has the form $\mathfrak p/\m$ for some ideal $\mathfrak p\supseteq \m$ of $\DD$. Given $r\in\mathfrak p$,  we have $y.(r+\m) =(qr +  \delta(r)) +\m$, so $qr +  \delta(r)\in\mathfrak p$. As $qr\in\mathfrak p$ also, it follows that $\delta(r)\in\mathfrak p$, which proves that $\mathfrak p$ is $\delta$-invariant.  Part (ii) follows immediately. 

For part (iii), observe first that whenever $\m$ is $\delta$-invariant, then $\delta(\m^k) \subseteq \m^k$ for all $k \geq 1$,
so that $\m^k$ is $\delta$-invariant.   Thus,  $\N(\m^n,q)$ is  an $\A$-module by (i). Moreover, $\N(\m^n,q)$ is generated by $1+\m^n\in\N(\m^n,q)^{\m}$, so $\N(\m^n,q) = \N(\m^n,q)^\m$ by Proposition~\ref{prop:genwt}.  As $\DD$ is Dedekind, the ideals of 
$\DD$ which contain $\m^n$ are the ideals of the form $\m^{k}$, with $0\leq k\leq n$, and these are all $\delta$-invariant. Thus by (i), the $\A$-submodules of $\N(\m^n,q)$ are $\m^{k}/\m^n$ for $k=0,1,\dots, n$, where $\m^0 = \DD$,  which are obviously linearly ordered by inclusion. This shows that $\N(\m^n,q)$ is uniserial; in particular, it is indecomposable.

For (iv), note first that $\ann_{\DD}\big(\N(\m^n, q)\big)=\m^n$, so 
$$\bigcap_{n\geq 1}\ann_{\DD}\big(\N(\m^n, q)\big) =\bigcap_{n\geq 1}\m^n=(0),$$
because $\DD$ is Dedekind.    Now observe for any nonzero ideal $\mathsf{J}$ of $\A$ that  $\mathsf{J}\cap \DD\neq (0)$. To see this, assume $a=\sum_{i=0}^{k}y^{i}s_{i}$  ($s_{i}\in\DD$ for all $i$) is a nonzero element  of 
minimal $y$-degree in $\mathsf{J}$.   Since $h \neq 0$,  we may take  $r\in\DD$  so that $\delta(r)\neq 0$.  Then by \eqref{eq:xynid},
$$
\mathsf{J} \ni [r, a] =\sum_{i=0}^{k}[r, y^{i}] s_{i}=-ky^{k-1}\delta(r)s_k  + \text{lower order terms in $y$}.
$$
Since $\chara(\FF) = 0$,  the minimality of $k$ forces $k=0$ to hold,  and  $a\in \mathsf{J} \cap \DD$.

If $\bigcap_{n\geq 1}\ann_{\A}(\N(\m^n, q)) \neq (0)$,   then it contains a nonzero $ r \in \DD$.  
But then $r\in\bigcap_{n\geq 1}\ann_{\DD}(\N(\m^n, q))=(0)$.   Hence the ideal  $\bigcap_{n\geq 1}\ann_{\A}(\N(\m^n, q))$ of $\A$ must be trivial, as claimed.

Suppose $\DD$ is a finitely generated $\FF$-algebra. Then the Nullstellensatz implies that $\DD/\m$ is finite dimensional over $\FF$. Since $\N(\m^n, q)$ has finite length, with composition factors isomorphic to $\DD/\m$ as $\DD$-modules, it follows that $\N(\m^n, q)$ is finite dimensional over $\FF$, and so is $\A/\ann_{\A}(\N(\m^n, q))$ for $n \geq 1$.  Since, $\bigcap_{n\geq 1}\ann_{\A}(\N(\m^n, q))=(0)$ we have that $\A$ is residually finite dimensional.
\end{proof} 

\begin{remark} The {\it Jacobson radical} $\mathcal J(\A_h)$ is the intersection of all the primitive ideals
of $\A_h$. If  $a \in {\mathcal J}(\A_h)$, then $1-a$ is invertible. But the invertible elements of $\A_h$ belong to $\FF$ according to  \cite[Thm.~2.1]{BLO1}, so it follows that $a \in \FF$. Since
$\mathcal J(\A_h) \neq \A_h$, it must be that  $a = 0$ and ${\mathcal J}(\A_h) = (0).$ 
Now if $\chara(\FF) = p>0$, then all irreducible modules are finite dimensional by Proposition \ref{P:findimp}, so the ideal $(0)$ is the intersection of ideals of $\A_h$ having finite co-dimension, and 
$\A_h$ is residually finite dimensional. 
\end{remark}

The above results show that special behavior occurs when an ideal of $\DD$ is  invariant under the
derivation $\delta$.    Such ideals are related  to normal elements of $\A$ as the next result shows.    Recall that an element $b \in \A$  is {\it normal} if $\A b = b\A$.  

\begin{lemma}\label{lem:norm-delta}  Assume  $\A = \DD[y,\mathsf{id}_\DD,\delta]$ is any Ore extension,  and let $\m$ be an ideal of $\DD$.  Then $\m$ is $\delta$-invariant if and only if $\m \A = \A \m$.     If $\m = \DD f$ and $\DD$
is commutative,  then  $\m$ is $\delta$-invariant if and only if $f$ is a normal element of $\A$.  
\end{lemma}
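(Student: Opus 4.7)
The plan is to exploit that $\A$ is free as a right $\DD$-module on $\{y^n : n \geq 0\}$, combined with two straightening identities that let us move elements of $\DD$ past powers of $y$: equation \eqref{eq:xynid}, namely $r y^n = \sum_{j=0}^n (-1)^j {n \choose j} y^{n-j} \delta^j(r)$, and its companion $y^n r = \sum_{j=0}^n {n \choose j} \delta^j(r) y^{n-j}$, both proved by induction from $yr = ry + \delta(r)$. In both identities every term on the right involves only the iterated derivatives $\delta^j(r)$ of the original coefficient, so $\delta$-invariance of $\m$ propagates cleanly through multiplication by $y^n$.

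For the forward direction of the first assertion, assume $\delta(\m) \subseteq \m$; then inductively $\delta^j(\m) \subseteq \m$ for every $j \geq 0$. A typical generator of $\m \A$ has the form $m y^n r$ with $m \in \m$ and $r \in \DD$; by \eqref{eq:xynid} this equals $\sum_j (-1)^j {n \choose j} y^{n-j} \delta^j(m) r$, and each $\delta^j(m) r$ lies in $\m \DD = \m$, so the whole expression lies in $\A \m$. The companion identity, applied to $y^n m$, gives the reverse inclusion $\A \m \subseteq \m \A$. For the converse, the free-module decomposition $\A = \bigoplus_{n \geq 0} y^n \DD$ together with a direct computation shows $\A \m = \bigoplus_{n \geq 0} y^n \m$, whence $\A \m \cap \DD = \m$. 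Now supposing $\m \A = \A \m$, for any $m \in \m$ we have $ym \in \A \m$ and $my \in \m \A = \A \m$, so $\delta(m) = ym - my$ lies in $\A \m \cap \DD = \m$, giving $\delta$-invariance.

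For the second statement, commutativity of $\DD$ yields $\m = \DD f = f \DD$, and associativity in $\A$ then gives $\m \A = f \DD \A = f \A$ and $\A \m = \A \DD f = \A f$. Hence the equality $\m \A = \A \m$ is literally the normality condition $f \A = \A f$, and combined with the first part one obtains the equivalence between $\delta$-invariance of $\m$ and normality of $f$. The only delicate step in the whole argument, and the likely main obstacle, is extracting the clean identity $\A \m \cap \DD = \m$ from the free right $\DD$-module structure of $\A$; this is what makes the reverse direction of the first claim go through, and everything else reduces to bookkeeping with the two straightening identities.
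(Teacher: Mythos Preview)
Your proof is correct and follows essentially the same approach as the paper's: both directions hinge on the free (left/right) $\DD$-module structure of $\A$ on $\{y^n\}$ and the basic commutation $yr = ry + \delta(r)$. The only cosmetic differences are that the paper handles the forward direction more tersely (noting $y\m \subseteq \m y + \delta(\m) \subseteq \m\A$ and invoking that $y$ and $\DD$ generate $\A$, rather than the full straightening identities) and uses the left decomposition $\m\A = \bigoplus_i \m y^i$ in place of your right decomposition $\A\m = \bigoplus_i y^i \m$ for the converse.
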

\begin{proof}  Suppose that $\m$ is a $\delta$-invariant ideal of $\DD$.  Since $y \m \subseteq \m y + \delta (\m) \subseteq \m \A$ and $\A$ is generated by $\DD$ and $y$, it follows that $\A \m \subseteq \m \A$.  A similar argument shows $\m \A \subseteq \A \m$, so indeed $\m \A = \A \m$.   If $\m$ is an ideal of $\DD$ with $\m \A = \A \m$, then $y \m \subseteq \m \A$.  Thus for any $r\in \m$, $ry + \delta (r) = yr \in \A \m =  \m \A$, and so $\delta (r) \in \m \A - ry \subseteq \m \A$.  Since $\m \A = \bigoplus_{i \ge 0} \m y^i$ and $\delta (r) \in \m \A \cap \DD$, it follows that $\delta (r) \in \m$,  and thus $\m$ is $\delta$-invariant.   Now if $\m = \DD f$ and $\DD$ is commutative,   then $\m$ is $\delta$-invariant if and only if $\A \DD f = f \DD \A$  if and only if $f \A = \A f$ (i.e. $f$ is normal in $\A$). 
\end{proof}

Now assume as before that  $\A = \DD[y,\mathsf{id}_\DD,\delta]$  with  $\DD$ a Dedekind domain, and fix $\m$
an ideal of $\DD$. We can induce the $\DD$-module $\DD/\m$ to an $\A$-module 
\begin{equation}\label{eq:ind}  \UU(\m) :=\A \ot_{\DD} \DD/\m.\end{equation}
 Set  $u_\m: = 1 \ot (1 + \m)\in\UU(\m)$. Since $\UU(\m)=\A u_\m$ and $\m u_\m=0$, Proposition~\ref{prop:genwt}\,(i) implies that $\UU(\m) =\UU(\m)^{\m}$ (and hence that $\UU(\m)$  is a generalized weight $\A$-module if
$\m$ is maximal). Furthermore, if $\m$ is $\delta$-invariant then $\UU(\m) =\UU(\m)_{\m}$ (which is a weight module
when $\m \in \Max(\DD)$).

As $\A$ is a free right $\DD$-module with basis $\{y^k \mid k \in \ZZ_{\geq 0}\}$, it follows (with a slight abuse of notation) that any element of $\UU(\m)$ can be written uniquely as a finite sum $\sum_{k\geq 0} y^k\bar{r}_{k} u_\m$, with $\bar{r}_{k}\in\DD/\m$.  

By the tensor product construction, the $\A$-module $\UU(\m)$ has the following universal property:

\begin{prop}\label{P:universal} 
Let $\V$ be an $\A$-module for $\A = \DD[y,\mathsf{id}_\DD,\delta]$, where  $\DD$ is a Dedekind domain, and 
suppose for some ideal  $\m$ of $\DD$  that $v\in \V_\m$. Then there  is a unique $\A$-module homomorphism $\UU(\m) \to \V$ with $u_\m  \mapsto v$, where $u_\m=1 \otimes (1 + \m)$. If $\V = \A v$, then $\V$ is a homomorphic image of $\UU(\m)$.\end{prop}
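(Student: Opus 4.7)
The plan is to invoke the standard universal property of the tensor product, adapted to the induction setting $\UU(\m) = \A \otimes_\DD \DD/\m$. First I would define a $\DD$-module homomorphism $\varphi : \DD/\m \to \V$ by $\varphi(r + \m) = rv$. This is well-defined precisely because $v \in \V_{\m}$, which means $\m v = 0$, so the coset representative does not matter; and it is $\DD$-linear by construction (viewing $\V$ as a $\DD$-module by restriction of scalars from $\A$).

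Next, by the universal property of the tensor product $\A \otimes_\DD (-)$ (equivalently, by the induction-restriction adjunction), the $\DD$-linear map $\varphi$ extends uniquely to an $\A$-module homomorphism
\[
\Phi : \UU(\m) = \A \otimes_\DD \DD/\m \longrightarrow \V, \qquad \Phi\bigl(a \otimes (r+\m)\bigr) = a \cdot \varphi(r+\m) = (ar) v.
\]
In particular, $\Phi(u_\m) = \Phi(1 \otimes (1+\m)) = v$, as required.

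For uniqueness, suppose $\Psi : \UU(\m) \to \V$ is any $\A$-module homomorphism with $\Psi(u_\m) = v$. Since $\UU(\m) = \A u_\m$ (as noted in the paragraph preceding the proposition), every element of $\UU(\m)$ has the form $a u_\m$ for some $a \in \A$, and $\A$-linearity forces $\Psi(a u_\m) = a \Psi(u_\m) = a v = \Phi(a u_\m)$. Hence $\Psi = \Phi$.

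Finally, if $\V = \A v$, then $\im(\Phi) \supseteq \A \cdot \Phi(u_\m) = \A v = \V$, so $\Phi$ is surjective and $\V \cong \UU(\m)/\ker(\Phi)$ is a homomorphic image of $\UU(\m)$. There is no real obstacle here: the only point requiring care is the well-definedness of $\varphi$, which is exactly the hypothesis $v \in \V_\m$; everything else is formal from the definition of $\UU(\m)$ as an induced module.
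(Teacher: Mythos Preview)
Your proof is correct and essentially identical to the paper's: both construct the map via the universal property of the tensor product (the paper phrases it as a $\DD$-balanced map $\A \times \DD/\m \to \V$, you use the equivalent induction--restriction adjunction), and both deduce uniqueness from $\UU(\m) = \A u_\m$.
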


\begin{proof}  The map  $\zeta: \A \times \DD/\m \to \V$ given  by $\zeta(a, (r+\m))=arv$  is well defined  because $\m v=0$,  and it is clearly $\DD$-balanced (see \cite[Chap.~9]{Pass}), so it induces an abelian group homomorphism $\A \ot_{\DD} \DD/\m \to \V$, satisfying $a\otimes(r+\m)\mapsto arv$. This is an $\A$-module homomorphism and $u_\m=1 \otimes (1 + \m)  \mapsto v$. The uniqueness is trivial as $\UU(\m)=\A u_\m$, and the remaining statements follow.
\end{proof} 

 \begin{prop}\label{P:Nmq}  Assume $\A = \DD[y,\mathsf{id}_\DD,\delta]$ is an Ore extension  with  $\DD$ a Dedekind domain, and let $\m$ be a $\delta$-invariant ideal of $\DD$.    Assume $\N(\m,q) = \DD/\m$ is as in Lemma \ref{lem: construct1} for some fixed element $q\in \DD$. Then  
 $$\N(\m,q) \cong  \UU(\m)/ \A(y-q)u_\m \cong \A/\big(\A(y-q)+\m\big).$$
\end{prop}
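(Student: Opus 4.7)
The plan is to establish the two isomorphisms in turn, first realizing $\N(\m,q)$ as a cyclic quotient of $\UU(\m)$ via the universal property, and then re-expressing $\UU(\m)$ itself through the standard identification with $\A/\A\m$.

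For the first isomorphism, since $\m$ is $\delta$-invariant, Lemma \ref{lem: construct1}(ii) gives $1 + \m \in \N(\m,q)_\m$, so Proposition \ref{P:universal} yields a surjective $\A$-module homomorphism $\varphi \colon \UU(\m) \to \N(\m,q)$ with $\varphi(u_\m) = 1 + \m$. Using $\delta(1) = 0$, a direct computation gives $\varphi((y-q)u_\m) = y.(1+\m) - q.(1+\m) = (q+\m) - (q+\m) = 0$, so $\A(y-q) u_\m \subseteq \ker\varphi$, and $\varphi$ descends to a surjection $\bar\varphi \colon \UU(\m)/\A(y-q)u_\m \to \N(\m,q)$. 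To invert $\bar\varphi$, I would show by induction on $k$ that $y^k u_\m \in \DD u_\m + \A(y-q)u_\m$, the inductive step following from $y r u_\m = r(y-q)u_\m + (rq + \delta(r))u_\m$ for $r \in \DD$. This yields $\UU(\m) = \DD u_\m + \A(y-q)u_\m$, so the assignment $r + \m \mapsto r u_\m + \A(y-q) u_\m$, well defined because $\m u_\m = 0$, provides the required two-sided inverse to $\bar\varphi$.

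For the second isomorphism, the map $\A \to \UU(\m)$, $a \mapsto a u_\m$, is a surjection of left $\A$-modules; using that $\A$ is free over $\DD$ on the basis $\{y^n\}_{n \geq 0}$, its kernel is $\A\m$. This gives $\UU(\m) \cong \A/\A\m$ and correspondingly $\UU(\m)/\A(y-q) u_\m \cong \A/(\A(y-q) + \A\m)$. The remaining point, and the main obstacle, is to identify $\A(y-q) + \A\m$ with $\A(y-q) + \m$; the inclusion $\supseteq$ is trivial, while $\A\m \subseteq \A(y-q) + \m$ I plan to prove by induction on $n$, showing $y^n s \in \A(y-q) + \m$ for each $s \in \m$. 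The case $n=1$ rests on $y s = s(y-q) + sq + \delta(s)$, with $sq \in \m$ by the ideal property and $\delta(s) \in \m$ by $\delta$-invariance. The inductive step writes $y^{n+1} s = y a (y-q) + y r$ after applying the hypothesis $y^n s = a(y-q) + r$ with $r \in \m$, and then reduces $y r$ to the $n = 1$ case. Assembling these pieces yields both claimed isomorphisms.
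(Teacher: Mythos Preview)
Your argument is correct; both isomorphisms go through as you outline. The route differs from the paper's mainly in bookkeeping. For the first isomorphism, the paper observes that $\{(y-q)^j \mid j \ge 0\}$ is a left $\DD$-basis of $\A$, so every element of $\UU(\m)$ is uniquely $\sum_j r_j(y-q)^j u_\m$; since $\zeta$ kills every term with $j \ge 1$ and $r_0 u_\m = 0$ precisely when $r_0 \in \m$, the kernel is read off directly as $\A(y-q)u_\m$ without constructing an inverse. For the second, the paper works with $\mathsf{J} = \A(y-q)+\m$ from the start, checking it is a left ideal via the single closed-form identity $(y-q)^k r = \sum_{j=0}^k \binom{k}{j}\delta^j(r)(y-q)^{k-j}$ (each $\delta^j(r) \in \m$ by $\delta$-invariance, and every summand with $k-j \ge 1$ lies in $\A(y-q)$), then maps $\UU(\m)$ onto $\A/\mathsf{J}$ using the universal property again. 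Your approach instead passes through $\UU(\m) \cong \A/\A\m$ and proves $\A\m \subseteq \A(y-q)+\m$ by induction on the $y$-degree; this is equally valid but a bit more laborious, and the paper's choice of the $(y-q)$-basis is what lets it avoid the inductions entirely. One minor quibble: your citation of Lemma~\ref{lem: construct1}(ii) for $1+\m \in \N(\m,q)_\m$ is unnecessary (and that part of the lemma concerns maximal $\m$); the fact is immediate from the definition of the $\DD$-action on $\DD/\m$.
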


\begin{proof}   By Proposition \ref{P:universal},   there is an $\A$-module map $\zeta: \UU(\m) \to \N(\m,q)$ such that
$\zeta(au_\m) = a(1 + \m)$ for all $a \in \A$.  We claim that  kernel of $\zeta$ is the space  $\mathsf{K} = \A(y-q)u_\m$.   It is easy to check that $\mathsf{K} \subseteq \mathsf{ker}(\zeta)$.   Note that $\{ (y-q)^j \mid  j \in \mathbb Z_{\geq 0}\}$ is a basis for $\A$ viewed as a left $\DD$-module, and $\sum_{j \geq 0} r_j (y-q)^j u_\m \in \mathsf{ker}(\zeta)$ (where $r_j \in \DD$ for all $j$)  if and only if $r_0 u_\m \in \mathsf{ker}(\zeta)$ if and only if $r_0 + \m = \bar 0$.   
But since $r_0 u_\m = 0$ when $r_0 \in \m$, we have  $\mathsf{ker}(\zeta) = \mathsf{K}$, and  $\N(\m,q) \cong \UU(\m)/\ker(\zeta) =\UU(\m)/ \A(y-q)u_\m$, as asserted.

Suppose $\mathsf{J}: = \A(y-q) + \m$.   (This sum is actually a  vector space direct sum, which can be seen 
from the fact that $\{(y-q)^j, j \in \mathbb Z_{\geq 0}\}$ is an $\DD$-basis of $\A$.)    Since $\m$ is $\delta$-invariant, $(y-q)^k r = \sum_{j=0}^k {k \choose j} \delta^j(r)(y-q)^{k-j} \in \mathsf{J}$ for all  $r \in \m$.  Hence, $\mathsf{J}$ is a left ideal of $\A$ and $\A/\mathsf{J} = \A v$ is an $\A$-module generated
by $v = 1+\mathsf{J}$.  Since $\m v = 0$,   there is a homomorphism  $\vartheta: \UU(\m) \to \A/{\mathsf J}$  with
$\vartheta(au_\m) = a+\mathsf{J}$ for all $a \in \A$.    Clearly,  $\A(y-q)u_\m$ is in the kernel,  and $r u_\m$ is in the
kernel for $r \in \DD$  if and only if $r \in \m$.   Thus,  $\UU(\m)/\A(y-q)u_\m \cong  \A/\mathsf{J}$. \end{proof}

 \begin{remark}  Part (i) of Proposition \ref{prop:genwt} and parts (i) and (ii) of Lemma \ref{lem: construct1}  are valid  when $\DD$ is an arbitrary ring.  Thus,  the same induced module $\UU(\m)$ can be constructed,  and the  results in  Proposition \ref{P:universal} and Proposition \ref{P:Nmq}  hold in the more general setting of an Ore extension  $\A = \DD[y, \mathsf{id}_\DD, \delta]$ over any ring $\DD$. \end{remark}

\begin{section}{Indecomposable $\A_h$-modules} \end{section}

{\it For the remainder of this paper, we specialize to the case that the Ore extension is the algebra $\A_h = \DD[y, \mathsf{id}_\DD, \delta]$, where $\DD = \FF[x]$ and $\delta(r) = r'h$ for all $r \in \DD$.}
\medskip

In this section, we use the modules $\N(\m^{n+1},q)$ for $n \geq 0$ from Section 2  to show that for any field $\FF$, if   $h \not \in \FF^*$,  then $\A_h$  can have an indecomposable module of dimension $n+1$ for any $n \geq 0$.  To provide an explicit description of the action of $\A_h$,  we will use a modified version of the usual $k$th derivative $f^{(k)}$ of $f \in \FF[x]$ when $\chara(\FF) = p > 0$, which we introduce next.

For any $k \in \mathbb Z_{\geq 0}$,  we write its $p$-adic expansion as  $k=\sum_{i  \geq 0}k_i  p^i$, where $0 \leq k_i < p$ for all $i$.   It is well known that in characteristic $p>0$, if $k,\ell \in \ZZ_{\geq 0}$, then 
$${\ell  \choose k}   =   \prod_{i \geq 0} {\ell_i \choose k_i}.$$    
Set 
\begin{equation}\label{eq:xl[k]} (x^\ell)^{[k]}  =  \Bigg(\prod_{i\geq 0}  \ell_i(\ell_i-1) \cdots (\ell_i-k_i+1) \Bigg)\,x^{\ell-k}. \end{equation} 
When $k_i= 0$, we interpret the product $  \ell_i(\ell_i-1) \cdots (\ell_i-k_i+1)$
as being 1.   This ``$p$-adic'' derivative can be extended linearly  to arbitrary polynomials $f \in \FF[x]$.  We write $f^{[k]}$ for the result and note that $f^{[0]} = f$.  
 
\medskip 

\begin{prop} \label{prop: construct}   Assume $h \not \in \FF^*$ and   $\m = \DD(x-\lambda)$, where $h(\lambda) = 0$.  Let  $q$ be a fixed element of $\DD$.   Then for all $n \geq 0$,   the module $\N(\m^{n+1}, q)$   is an indecomposable $\A_h$-module of dimension $n+1$  with basis $\{v_j  \mid j = 0,1,\dots, n\}$ such that  for each $j$, 
\begin{itemize}
\item[{\rm (i)}]   $x. v_j   =  \lambda v_j  + v_{j-1}$,   where $v_{-1} = 0$;
\vspace{-.15 truein} 
\item[{\rm (ii)}]  $y. v_j   = q.v_j +(n-j)h.v_{j+1} = q.v_j + (n-j)\displaystyle{\sum_{\ell = 0}^j  \eta_{j+1-\ell} v_\ell}$,   
where
\begin{equation}\label{eq:etal} \eta_k = \begin{cases}  \displaystyle{ \frac{h^{(k)}(x) \mid_{x = \lambda}}{k !}} & \qquad \hbox{\rm if} \ \ \chara(\FF) = 0, \\
 \displaystyle{ \frac{h^{[k]}(x) \mid_{x = \lambda}}{\prod_{i \geq 0} k_i !}} & \qquad \hbox{\rm if} \ \ \chara(\FF) = p > 0,  \end{cases} \end{equation} 
for all $k  \geq 0$,    and $q.v_j$ is computed
using \eqref{eq:fvk} below.  
\end{itemize}
 \end{prop}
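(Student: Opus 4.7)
The first move is to verify that $\m = \DD(x-\lambda)$ really is a $\delta$-invariant maximal ideal so that Lemma~\ref{lem: construct1} applies. Maximality is clear since $\DD/\m \cong \FF$, and the $\delta$-invariance is the whole point of the hypothesis $h(\lambda) = 0$: because $h \in \m$, every $r \in \DD$ satisfies $\delta(r) = r'h \in \DD h \subseteq \m$. Consequently $\m^{n+1}$ is also $\delta$-invariant, so by Lemma~\ref{lem: construct1}(iii) the module $\N(\m^{n+1},q) = \DD/\m^{n+1}$ is uniserial, hence indecomposable, and its $\FF$-dimension equals $n+1$ because $\DD/\m \cong \FF$ and $\N(\m^{n+1}, q)$ has a length-$(n+1)$ composition series with these simple factors.

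For the explicit description I would take the basis
\[
v_j := (x-\lambda)^{n-j} + \m^{n+1}, \qquad 0 \leq j \leq n,
\]
which is an $\FF$-basis of $\DD/\m^{n+1}$ by polynomial division. Part (i) is then immediate: $x(x-\lambda)^{n-j} = \lambda(x-\lambda)^{n-j} + (x-\lambda)^{n-j+1}$, and $(x-\lambda)^{n+1} \in \m^{n+1}$ forces the boundary condition $v_{-1} = 0$.

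For (ii), the action rule from Lemma~\ref{lem: construct1}(i) gives
\[
y.v_j = q.v_j + \delta\bigl((x-\lambda)^{n-j}\bigr) + \m^{n+1} = q.v_j + (n-j)(x-\lambda)^{n-j-1}h + \m^{n+1} = q.v_j + (n-j)\,h.v_{j+1},
\]
which is the first equality of (ii). For the explicit expansion I would expand $h$ as a Taylor series centred at $\lambda$: in characteristic zero this is $h(x) = \sum_{k\geq 0}\eta_k(x-\lambda)^k$ with $\eta_k = h^{(k)}(\lambda)/k!$, and in characteristic $p$ the same expansion holds provided $\eta_k$ is interpreted as a divided-power (Hasse) derivative, which is exactly the content of \eqref{eq:xl[k]}: by Lucas's theorem $(x^\ell)^{[k]}/\prod_i k_i! = \binom{\ell}{k}x^{\ell-k}$, so the $\eta_k$ in \eqref{eq:etal} are precisely the Taylor coefficients of $h$ at $\lambda$ in either characteristic. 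Multiplying $h = \sum_k \eta_k(x-\lambda)^k$ by $(x-\lambda)^{n-j-1}$ and reducing modulo $\m^{n+1}$ kills every term with $k \geq j+2$, while the $k = 0$ term already vanishes because $\eta_0 = h(\lambda) = 0$; the reindexing $\ell = j+1-k$ then yields
\[
h.v_{j+1} = \sum_{k=1}^{j+1}\eta_k (x-\lambda)^{n-j-1+k} + \m^{n+1} = \sum_{\ell=0}^{j}\eta_{j+1-\ell}\,v_\ell,
\]
as required.

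The only point that is not a routine check is the positive-characteristic Taylor identity $h(x) = \sum_k \eta_k(x-\lambda)^k$, which I would isolate as a separate lemma proved by verifying the formula on monomials $x^\ell$ via Lucas's theorem and extending by linearity. Everything else is bookkeeping, and the indecomposability is handed to us gratis by Lemma~\ref{lem: construct1}(iii).
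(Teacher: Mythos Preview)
Your proof is correct and follows essentially the same route as the paper: the same basis $v_j = (x-\lambda)^{n-j} + \m^{n+1}$, the same appeal to Lemma~\ref{lem: construct1}(iii) for indecomposability, and the same computation of the $y$-action via $\delta((x-\lambda)^{n-j}) = (n-j)(x-\lambda)^{n-j-1}h$ followed by the Taylor expansion of $h$ at $\lambda$. The only cosmetic difference is that the paper first derives the general action formula \eqref{eq:fvk} for arbitrary $f\in\DD$ (via the binomial expansion of $x^\ell.v_j$) and then specializes to $f=h$, whereas you phrase the same content as the Taylor identity $h = \sum_k \eta_k(x-\lambda)^k$; since the statement itself refers to \eqref{eq:fvk} for computing $q.v_j$, the paper's ordering is slightly more economical.
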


\begin{proof}  Since $\m$ is a maximal, $\delta$-invariant ideal of $\DD = \FF[x]$, 
we know  by Lemma \ref{lem: construct1}\,(iii)  that $\N(\m^{n+1},q)$ is an indecomposable generalized weight $\A_h$-module for all $n \geq 0$.

To simplify the notation in the remainder of the proof,  set $\mathfrak{p} = \m^{n+1}$.        Let
$v_j: = (x - \lambda)^{n-j} + \mathfrak{p}$  for $j = 0,1,\dots, n$,  and set $v_{j} = 0$ if $j < 0$.      Then
$$(x-\lambda). v_j  =  (x - \lambda)^{n-(j-1)} + \mathfrak{p} = v_{j-1},$$
so that $x. v_j = \lambda v_j + v_{j-1}$ holds for all $j$ as in (i).
Arguing by  induction,  we have
$$x^\ell.v_j = \sum_{k = 0}^\ell  {\ell \choose k} \lambda^{\ell-k} v_{j-k}$$
for all $\ell  \geq 0$.   Hence,  it follows that for any polynomial $f = f(x) \in \DD$,  
\begin{equation}\label{eq:fvk} f.v_j = \begin{cases}  \displaystyle{  \sum_{k \geq 0} \frac {f^{(k)}(x) \mid_{x = \lambda}}{k !}\,v_{j-k}}  & \qquad \hbox{\rm if} \ \ \chara(\FF) = 0, \\
 \displaystyle{  \sum_{k \geq 0} \frac {f^{[k]}(x) \mid_{x = \lambda}}{\prod_{i \geq 0} k_i !}\,v_{j-k}} & \qquad \hbox{\rm if} \ \ \chara(\FF) = p > 0, \end{cases}
  \end{equation} 
where  $f^{(0)} = f = f^{[0]}$.    In particular, 
\begin{eqnarray*}  h.v_j    &=& \sum_{k= 1}^j  \eta_k  v_{j-k} = \sum_{k = 0}^{j-1} \eta_{j-k} v_{k}, \end{eqnarray*} 
where $\eta_k$ is as in \eqref{eq:etal},   and $\eta_0 = 0$ since $h(\lambda) = 0$. 

Now 
\begin{eqnarray*} y.v_j  &=&  y. \bigg(  (x-\lambda)^{n-j} + \mathfrak{p}\bigg) =  q(x-\lambda)^{n-j}   + \delta\big((x-\lambda)^{n-j}\big) + \mathfrak{p} \\
&=& q.v_j  +  (n-j)h.v_{j+1}  \\
&=& q.v_j +  (n-j)\sum_{k = 0}^{j} \eta_{j+1-k} v_{k}, \end{eqnarray*} 
where $q.v_j$ can be computed using \eqref{eq:fvk},  to give (ii).    \end{proof}
 
 \begin{remark} In the preceding result,  the space  $\N_j: = \spann_\FF\{v_0,v_1, \dots, v_j\}$
 is an $\A_h$-submodule of $\N(\m^{n+1},q)$ for each $j = 0,1,\dots, n$.   Set $\N_{-1} = \m^{n+1}$.   If $\overline v_j = v_j + \N_{j-1}$, then for $j = 0,1,\dots, n$, 
 we have $x.\overline v_j= \lambda \overline v_j$ and $y. \overline v_j= \mu_j \overline v_j$, where
 $\mu_j = q(\lambda) + (n-j) \eta_1$.   Therefore,  $\N_j/\N_{j-1} = \FF\overline v_j \cong \V_{\lambda,\mu_j}$ in the notation used in Theorem \ref{thm:fd} and  Corollary \ref{C:algclo} below. 
\end{remark}

\begin{section}{Generalized Weight Modules for $\A_h$}\end{section}

For the algebra $\A_h = \DD[y, \mathsf{id}_\DD, \delta]$, a maximal ideal  $\m = \DD f$ of  $\DD = \FF[x]$   is $\delta$-invariant if and only if  $f$ divides $\delta(f) = f' h$.   Since $f$ is a prime polynomial, the only way that can happen when $\chara(\FF) = 0$ is if $f$ is a prime factor of $h$. Therefore,  the $\delta$-invariant maximal ideals are exactly the ideals generated by the prime factors of $h$ when $\chara(\FF) = 0$.   When $\chara(\FF) = p > 0$, then $f$ divides $\delta(f) = f' h$ exactly when $f$ is a prime factor of $h$ or when $f' = 0$.   In the latter case, $f \in \FF[x^p]$. 
(This could also be deduced using Lemma \ref{lem:norm-delta} above and Theorem 7.3 of \cite{BLO1},  which gives a complete description of all the normal elements of $\A_h$.)
We record these facts for later use.   \medskip
 
\begin{lemma}\label{lem:delta-inv}  Assume  $\m = \DD f$ is a $\delta$-invariant maximal ideal of  $\DD = \FF[x]$,
where $\delta(r) = r'h$ for all $r \in \DD$.  
If $\chara(\FF) = 0$, then $f$ is a prime factor of $h$;  if $\chara(\FF) = p > 0$, then either $f$ is a prime factor of $h$
or $f \in \FF[x^p]$.  \end{lemma}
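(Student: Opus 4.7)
The plan is to reduce $\delta$-invariance of $\m = \DD f$ to a simple divisibility condition and then exploit that $\FF[x]$ is a PID. First I would observe that $\m$ is $\delta$-invariant if and only if $\delta(f) \in \m$: one direction is immediate, and for the other, note that an arbitrary element $fg$ of $\m$ satisfies $\delta(fg) = (fg)'h = f'gh + fg'h$, so $\delta(fg) \in \m$ provided $\delta(f) = f'h \in \m$. Thus the hypothesis is equivalent to $f \mid f'h$ in $\DD$.

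Next I would use that $\m$ maximal in $\DD = \FF[x]$ forces $f$ to be irreducible, hence prime (as $\FF[x]$ is a UFD). From $f \mid f'h$ and primality, either $f \mid h$ or $f \mid f'$. In the first case, $f$ is a prime factor of $h$, giving one of the desired conclusions.

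In the second case $f \mid f'$, the inequality $\deg(f') < \deg(f)$ forces $f' = 0$. If $\chara(\FF) = 0$, then $f' = 0$ makes $f$ a constant, contradicting the irreducibility of $f$ (constants are units in $\FF[x]$); so this case cannot occur, and we must have $f \mid h$, as asserted. If $\chara(\FF) = p > 0$, then $f' = 0$ is equivalent to saying that every monomial $x^k$ appearing in $f$ has $p \mid k$, that is, $f \in \FF[x^p]$, which is exactly the alternative claimed.

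The argument is essentially a one-line divisibility computation followed by a case split, so there is no real obstacle. The only point requiring a small amount of care is the equivalence between $\delta$-invariance of the ideal $\m$ and the single condition $\delta(f) \in \m$, which uses the specific form $\delta(r) = r'h$ of the derivation; once this reduction is in hand, primality of $f$ in the UFD $\FF[x]$ and the degree comparison do all the remaining work.
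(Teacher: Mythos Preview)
Your proposal is correct and follows essentially the same route as the paper: the paper also reduces $\delta$-invariance of $\m = \DD f$ to the condition $f \mid \delta(f) = f'h$, invokes primality of $f$ to split into $f \mid h$ or $f \mid f'$, and then uses the degree comparison to force $f' = 0$ in the second case, yielding the characteristic-dependent conclusion. Your write-up is slightly more explicit about the equivalence $\delta(\m)\subseteq \m \iff \delta(f)\in\m$, but the underlying argument is identical.
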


\begin{subsection} {Induced $\A_h$-modules}  \end{subsection}   

Assume $\m$ is an ideal of $\DD$, not necessarily $\delta$-invariant.  The induced $\A_h$-module, 
$\UU(\m) := \A_h \ot_\DD  \DD/\m = \A_h u_m$,  where  $u_\m: = 1 \ot (1 + \m)$,    
has a basis 
$$\{y^k x^\ell u_\m  =  y^k \ot (x^\ell + \m) \mid 0 \leq \ell < \mathsf{dim}(\DD/\m), \  k \in \ZZ_{\geq 0}\}$$
with $\A_h$-action given by 
\begin{eqnarray}\label{eq:Ahact} 
 x . y^n x^\ell u_\m  &= &  \sum_{j=0}^{n} (-1)^j {n \choose j}  y^{n-j}\delta^j(x)x^\ell u_\m,    \\
y. y^n x^\ell u_\m  &= &  y^{n+1} x^\ell u_\m. \nonumber \end{eqnarray}   
Then for $r \in \FF[x]$, 
\begin{equation}\label{eq:zero}  y^k r u_\m = 0 \ \ \hbox{\rm if and only if} \ \   r \in \m. \end{equation}      
 Since $\UU(\m) = \A_h u_\m$ and $\m u_\m = 0$,  by Proposition \ref{prop:genwt}\,(i)  we have  $\UU(\m) = \UU(\m)^\m$,  
and when $\m$ is $\delta$-invariant,  $\UU(\m) = \UU(\m)_\m$.
\smallskip

We assume now that $\m \in \Max(\DD)$ so that $\m = \DD f$ for some prime polynomial $f \in \DD$,
and consider first  the following case:
\bigskip

\noindent \textbf{$\boldsymbol f$ is a factor of $\boldsymbol h$:}   Since $\m$ is $\delta$-invariant when $f$ is a factor of $h$,  $\UU(\m) = \UU(\m)_\m$.    Lemmas 6.1 and 7.1 of \cite{BLO1} show that $[\A_h,\A_h] \subseteq h \A_h = \A_h h$.
Thus, for any $a,b \in \A_h$ and $w \in \UU(\m)$, we have

\begin{equation}\label{eq:xyw} baw = abw +[b,a]w=abw, \end{equation}   
and  $a  \UU(\m)$ is an $\A_h$-submodule
of $\UU(\m)$ for any $a \in \A_h$.    

 If $a = \sum_{i \geq 0} y^i  r_i$ and  $b = \sum_{i \geq 0} y^i s_i$, where $r_i, s_i \in \DD$,  then $a \UU(\m) = b \UU(\m)$ if and only if
$r_i - s_i \in \m$ for all $i \geq 0$.   In particular,  $a \UU(\m) = 0$ if and only if  $r_i \in \m$ for all $i$. 
Hence $\m[y]$ annihilates $\UU(\m)$,  and the action of $\A_h$  on  $\UU(\m)$ is the same as the action of the  commutative polynomial algebra  $\PR_\m = (\DD/\m)[y]  \cong  \DD[y]/\m[y]$.     

Let $\W$ be a submodule of $\UU(\m)$,  and set 
\begin{equation}\label{eq:JW} \mathsf{J_W} = \bigg \{ \bar a = \sum_{i \geq 0} y^i \bar r_i  \in \PR_\m \,\bigg |\,  \bar a \UU(\m) 
\subseteq \W \bigg\}\end{equation}   Then $\mathsf{J_W}$ is an ideal of  the PID $\PR_\m$, 
and we may assume  $\mathsf{J_\W} = \PR_\m  \bar g$ for some monic polynomial  $\bar g =  \sum_{i \geq 0} y^i \bar g_i  \in \PR_\m$.   The
 map $\PR_\m \rightarrow \UU(\m)/\W$ given by $\bar a  \mapsto  \bar a(u_\m + \W)$ is onto and has kernel $\PR_\m  \bar g$.
 Thus,   $ \UU(\m)/\W \cong \PR_\m /\PR_\m \bar g$, which has dimension $\degg(f)\degg(\bar g)$ over $\FF$, and 
 $\UU(\m)/\W$ is irreducible when $\bar g$ is a prime polynomial in $\PR_\m$.  
   
Conversely, if  $\bar g \in \PR_\m$, then $\bar g \UU(\m)$ is a submodule of $\UU(\m)$ and
$\UU(\m)/\bar g \UU(\m)  \cong  \PR_\m/\PR_\m \bar g$.   
When $\bar g$ is a monic prime polynomial in $\PR_\m$,  the quotient 
\begin{equation}\label{eq:lmgdef} \Lmg: = \UU(\m)/ \bar g \UU(\m)  \cong \PR_\m/ \PR_\m {\bar g}\end{equation}  is irreducible, and by the preceding paragraph,  every irreducible quotient of $\UU(\m)$ has
this form.   Any irreducible generalized weight $\A_h$-module $\V = \V^\m$ must be a weight module,  $\V = \V_\m$ by Proposition \ref{prop:genwt}\,(iii),  since $\m$ is $\delta$-invariant.  Moreover, since  $\V$ is a homomorphic image of $\UU(\m)$, it is isomorphic to some irreducible quotient of $\UU(\m)$.  Hence,  $\V \cong \Lmg$ for some monic prime polynomial $\bar g$ of $\PR_\m$.  
\bigskip

\noindent \textbf {$\boldsymbol f$ is not a  factor of $\boldsymbol h$:}  \ \  Assume now   that $\underline {\chara(\FF) = 0}$ and $f$ is not a factor of $h$.   Let  $\W$ be a nonzero submodule of $\UU(\m)$.   Let  $0 \neq w = \sum_{k=0}^n y^k r_k u_\m$ be an element 
of minimal degree in $y$ lying in $\W$, where $r_k \in \DD$ for all $k$ and $\degg r_k < \degg f.$ Then  $f$ does
not divide $r_n$ by the minimality assumption. 
Applying $f$ we have

$$fw = \sum_{k=0}^n \sum_{j=0}^k(-1)^j {k \choose j}  y^{k-j} \delta^j(f) r_k u_\m \in  \W.$$
Since $\delta^0(f) = f$, and $fr_ku_\m = 0$ for all $k = 0,1,\dots, n$,   the element $fw$ has smaller degree in $y$, and so must be 0.   Now if $n \geq 1$, this implies that $ny^{n-1}\delta(f)r_nu_\m = 0$.     Since $\delta(f)r_n$
is not divisible by $f$ and $\chara(\FF) = 0$,   we have arrived at a contradiction.  Hence, any nonzero element of minimal
$y$-degree in $\W$ must have the form $w = r_0 u_\m$.    But since $\DD/\m$ is a field, there
exists an $s\in \DD$ so that  $s r_0 \equiv 1 \modd \m$.   Thus,  
$u_\m =  sr_0 u_\m = sw \in \W$.   Consequently, $\UU(\m) = \A_h u_\m \subseteq \W$, and
$\UU(\m)$ is an irreducible generalized weight module for $\A_h$.      We summarize what we have just shown.
\bigskip

\begin{thm}\label{T:induce}  Let  $\m =  \DD f$ be the  maximal ideal 
of $\DD = \FF[x]$ generated by the prime polynomial $f$, and let $\UU(\m): = \A_h \ot_\DD \DD/\m$ be the $\A_h$-module induced from
the irreducible $\DD$-module $\DD/\m$.    Then the following hold:
 \begin{itemize}  
\item[{\rm (i)}]  $\UU(\m) = \UU(\m)^\m$ is a generalized weight
module for $\A_h$.   If $\m$ is $\delta$-invariant,  then $\UU(\m) = \UU(\m)_\m$ is a weight module for
$\A_h$. 
\item[{\rm (ii)}]     If $f$ is a factor of $h$, then $\UU(\m) = \UU(\m)_\m$.
For any monic prime polynomial $\bar g \in \PR_\m =(\DD/\m)[y]$,  the quotient 
$\Lmg = \UU(\m)/\bar g \UU(\m)$  is an irreducible weight 
 $\A_h$-module of dimension $\deg(f)\deg(\bar g)$ over $\FF$, and any irreducible generalized
 weight module $\V = \V^\m$ for $\A_h$ is isomorphic to 
$\Lmg$ for some monic prime polynomial $\bar g \in \PR_\m$.    
 \item[{\rm (iii)}] If $\chara(\FF) = 0$, and $f$ is not a factor of $h$,  then $\UU(\m) = \UU(\m)^\m$
 is an irreducible generalized weight module for $\A_h$.     \end{itemize}
\end{thm}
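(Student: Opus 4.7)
The plan is to assemble the observations made in the paragraphs immediately preceding the theorem into a single proof, using the general framework of Section 2.

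For part (i), the element $u_\m = 1 \otimes (1+\m)$ satisfies $\m u_\m = 0$ and generates $\UU(\m) = \A_h u_\m$, so $u_\m \in \UU(\m)_\m$ and Proposition \ref{prop:genwt}(i) immediately yields $\UU(\m) = \UU(\m)^\m$, with the strengthening $\UU(\m) = \UU(\m)_\m$ whenever $\m$ is $\delta$-invariant.

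For part (ii), Lemma \ref{lem:delta-inv} makes $\m$ $\delta$-invariant as soon as $f$ divides $h$, so (i) already gives $\UU(\m) = \UU(\m)_\m$. The crucial simplification is that $[\A_h,\A_h]\subseteq h\A_h$, combined with $h\in\m$ and $\m\UU(\m)=0$, forces every commutator to annihilate $\UU(\m)$; thus $\A_h$ acts on $\UU(\m)$ through the commutative quotient $\PR_\m=(\DD/\m)[y]$, which is a principal ideal domain. Since $\UU(\m)$ is a free cyclic $\PR_\m$-module via $u_\m$, its $\A_h$-submodules are exactly the $\bar g\UU(\m)$ with $\bar g\in\PR_\m$, and its irreducible quotients correspond bijectively to maximal ideals $\PR_\m\bar g$ generated by monic prime polynomials. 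This gives $\Lmg=\UU(\m)/\bar g\UU(\m)$, of dimension $\deg(f)\deg(\bar g)$ over $\FF$ (since $\PR_\m/\PR_\m \bar g$ has $\FF$-dimension $\dim_\FF(\DD/\m)\cdot\deg(\bar g)$). Conversely, if $\V=\V^\m$ is irreducible then Proposition \ref{prop:genwt}(iii) upgrades it to $\V=\V_\m$, so every nonzero $v\in\V$ is killed by $\m$; Proposition \ref{P:universal} then realizes $\V$ as a nonzero (hence irreducible) quotient of $\UU(\m)$, necessarily of the form $\Lmg$.

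For part (iii), the plan is to take a nonzero submodule $\W$ and choose $w=\sum_{k=0}^{n}y^k r_k u_\m\in\W$ of minimal $y$-degree $n$, with $\deg r_k<\deg f$ and $r_n\notin\m$. Expanding $fw$ via \eqref{eq:xynid} kills the $j=0$ contributions because $fu_\m=0$ and $\DD$ is commutative, leaving an expression of strictly smaller $y$-degree. Minimality of $n$ forces $fw=0$, and in particular the coefficient $-n\,f'h\,r_n$ of $y^{n-1}$ must lie in $\m$. Since $\chara(\FF)=0$, none of $n$, $f'$, $h$, $r_n$ is divisible by the prime polynomial $f$, contradicting primality of $\m=\DD f$ unless $n=0$. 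Then $w=r_0 u_\m$ with $r_0$ a unit modulo $\m$, whence $u_\m \in \W$ and $\W=\UU(\m)$.

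The main obstacle is the step in (iii) ruling out $n\geq 1$: the expansion of $fw$ must be tracked carefully enough to identify $-n f'h r_n$ as the leading $y$-coefficient, and then primality of $\m$ must be invoked four times. Both the characteristic zero hypothesis (to ensure $n\neq 0$ in $\FF$ and $f'\neq 0$) and the hypothesis $f\nmid h$ are essential here; dropping either one opens the door to further irreducible quotients of $\UU(\m)$, as illustrated by (ii).
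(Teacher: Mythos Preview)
Your proposal is correct and follows essentially the same route as the paper: part (i) via Proposition~\ref{prop:genwt}(i), part (ii) by observing that $[\A_h,\A_h]\subseteq h\A_h$ annihilates $\UU(\m)$ so that the module theory reduces to that of the PID $\PR_\m=(\DD/\m)[y]$, and part (iii) by the minimal $y$-degree argument applied to $fw$ to force $n=0$. The only cosmetic difference is that the paper phrases the leading coefficient as $\delta(f)r_n$ rather than $f'hr_n$, but these are the same thing.
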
 

\begin{subsection}{Finite-dimensional $\A_h$-modules} \end{subsection}

Let $\V$ be an irreducible weight module for  $\A_h$ such that
$\V = \V_\m$ for some $\delta$-invariant maximal ideal $\m = \DD f$ of $\DD$.  
 Recall that the ideal $\m$ is $\delta$-invariant  if and only if  $f$ divides $
\delta(f) = f'h$, which says that either $f$ is a prime factor of $h$ or else
$\chara(\FF) = p > 0$ and $f \in \FF[x^p]$ (as in Lemma \ref{lem:delta-inv}).   
Since $\V$ is a homomorphic
image of $\UU(\m)$ by Proposition \ref{P:universal},   Theorem \ref{T:induce}(ii)  shows that $\V$ is finite dimensional whenever  $f$ is a prime factor of $h$.    Since by Proposition \ref{P:findimp},   {\it any}
irreducible module is finite dimensional when $\chara(\FF) = p > 0$,
an  irreducible $\A_h$-module $\V$ such that $\V= \V_\m$ and $\m$ is
$\delta$-invariant is always finite dimensional.    Next we explore the converse. 
    \medskip

\begin{lemma}\label{lem:ann} 
Assume  $\M$ is  any  finite-dimensional irreducible $\A_h$-module. Then there exists a monic prime polynomial $f \in \DD$ so that $\M = \M^\m$ for $\m = \DD f$.  Either $\m$ is $\delta$-invariant and $\M = \M_\m$,  or $\chara(\FF) = p > 0$ and $\ann_\DD(\M) = \m^p  = \DD f^p$,   where $f \not \in \FF[x^p]$.   \end{lemma}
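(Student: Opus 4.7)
The plan is to proceed in three moves: locate the $\DD$-support of $\M$, show that $\ann_\DD(\M)$ is automatically $\delta$-invariant, and finally, when characteristic considerations prevent $\m$ from being $\delta$-invariant, use centrality of $f^p$ to pin down the annihilator exactly.

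First I would observe that $\M$ is $\DD$-torsion: for any $v \in \M$ the cyclic module $\DD v$ is finite dimensional while $\DD$ is not, so $\ann_\DD(v) \neq 0$. Proposition~\ref{prop:genwt}\,(iii) then produces a maximal ideal $\m = \DD f$ of $\DD$ with $\M = \M^\m$, and since every annihilator of a vector in $\M$ is a power of $\m$, the finite dimensionality forces $\ann_\DD(\M) = \m^n = \DD f^n$ for some $n \geq 1$.

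The key input is that $\ann_\DD(\M)$ is $\delta$-invariant. Indeed, for any $r \in \ann_\DD(\M)$ and $v \in \M$,
$$0 = r(yv) = (ry)v = (yr - \delta(r))v = -\delta(r)v,$$
so $\delta(r) \in \ann_\DD(\M)$. Taking $r = f^n$ and using $\delta(f^n) = nf^{n-1}f'h$ gives $f \mid nf'h$. If $n$ is a unit in $\FF$ (in particular if $\chara(\FF) = 0$), then $f \mid f'h$; since $f$ is a monic prime with $\deg f' < \deg f$, either $f \mid h$ or $f' = 0$. By Lemma~\ref{lem:delta-inv} this means $\m$ is $\delta$-invariant, and Proposition~\ref{prop:genwt}\,(iii) upgrades $\M = \M^\m$ to $\M = \M_\m$.

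The substantive case is when $\m$ is \emph{not} $\delta$-invariant. The previous paragraph then forces $\chara(\FF) = p > 0$ and $n = pk$ for some $k \geq 1$, with $f \notin \FF[x^p]$ (Lemma~\ref{lem:delta-inv}). The hard part is showing $k = 1$; the trick is that in characteristic $p$,
$$[y, f^p] = \delta(f^p) = p\, f^{p-1} f' h = 0,$$
so $f^p$ commutes with both $x$ and $y$, making $f^p \M$ an $\A_h$-submodule of $\M$. Irreducibility leaves only $f^p \M = 0$ or $f^p \M = \M$. The second alternative would make $f^p$ act invertibly on the finite-dimensional space $\M$, hence $f^n = (f^p)^k$ invertibly too, contradicting $f^n \M = 0$ with $\M \neq 0$. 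Therefore $f^p \M = 0$, which gives $n \leq p$; together with $n = pk$ this forces $n = p$, as required. The real obstacle is this last step: everything up to $n \equiv 0 \pmod p$ follows routinely from $\delta$-invariance of $\ann_\DD(\M)$, but cutting $n$ down to exactly $p$ relies essentially on the centrality of $f^p$ in characteristic $p$.
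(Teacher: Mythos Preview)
Your proof is correct and follows essentially the same approach as the paper: locate $\m$ via Proposition~\ref{prop:genwt}, observe that $\delta(f^n)\in\m^n$ forces $f\mid n f'h$, and in the residual characteristic-$p$ case use centrality of $f^p$ to conclude $\m^p\M=0$. The only cosmetic differences are that the paper cites Theorem~\ref{T:center} for $f^p\in\centh$ rather than computing $\delta(f^p)=0$ directly, and phrases the final contradiction by iterating $\m^p\M=\M$ to $\m^{np}\M=\M$ instead of invoking invertibility on a finite-dimensional space.
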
  

\begin{proof}  Since $\M$ has $\DD$-torsion and is irreducible,  $\M = \M^\m$ for some maximal ideal $\m = \DD f$ generated by a monic prime polynomial $f \in \DD$ by  Proposition  \ref{prop:genwt}.   As  $\M$ is finite dimensional, there is a least integer $k \geq 1$ so that $\m^k  \M = 0$.   Hence $\m^k = \ann_\DD(\M)$.    Since for any $v \in \M$,  we have $0 = y f^kv - f^k y v  = \delta(f^k) v$,  it must be that  $\delta(f^k) \in \m^k = \DD f^k$.   But this says, $f^k$ divides $k f^{k-1} f' h$, and hence that $f$ divides $k f' h = k \delta(f)$.     If  $f$ divides $\delta(f)$, then $\m = \DD f$ is $\delta$-invariant and $\M = \M_\m$.   If that is not the case, then $\chara(\FF) = p > 0$,  $f' \neq 0$,  and $k \equiv 0 \modd p$ must hold.

Assume now that $\chara (\FF) = p>0$.  Since $\m^p = \DD f^p$ and $f^p \in \FF[x^p] \subseteq \centh$, it follows that $\m^p \M$ is an $\A_h$-submodule of $\M$.  Because $\M$ is irreducible, either $\m^p \M = 0$ or $\m^p \M = \M$.  If $\m^p \M = \M$, then $\m^{2p} \M = \m^p (\m^p \M) = \m^p \M = \M$, and (proceeding inductively) $\m^{(n+1)p} \M = \m^{np} ( \m^p \M) = \m^{np} \M = \M$.  Since some power of $\m$ must annihilate $\M$, it is necessarily the case that $\m^p \M = 0$.
\end{proof} 

\begin{remark} When $\chara(\FF) = p > 0$ and $\lambda$ is not a root of $h(x)$, the irreducible $\A_h$-modules
$\M =\Lmgb$ appearing in Lemma \ref{L:Vlb}  below have the property that $\ann_\DD(\M) = \m^p$
where $\m = \DD(x-\lambda)$.   As we show in Corollary \ref{C:algclo},  they,  along with the one-dimensional 
modules,  are the only irreducible $\A_h$-modules when $\FF$ is algebraically closed of
 characteristic $p$.  \end{remark}

\begin{section}{Primitive ideals of $\A_h$}  \end{section}

Recall that a {\it primitive ideal} is the annihilator of an irreducible module;  in other words, it is the kernel of an irreducible representation. A ring is primitive if it has a faithful irreducible module.  In any ring, primitive ideals are prime,  and maximal ideals are primitive, but the converses of  these statements generally fail to be true. For the universal enveloping algebra of a finite-dimensional nilpotent Lie algebra over a field of characteristic $0$, \cite[Prop.~4.7.4]{Dix} shows that all primitive ideals are maximal. We will see below that this does not hold for $\A_{h}$.  In fact,  if $\chara (\FF)=0$,  then $\A_{h}$ has faithful irreducible modules.

In  \cite[Thm.\ 7.7]{BLO1} we determined the height-one prime ideals of $\A_{h}$ and noted in \cite[Remark 7.9]{BLO1} that the maximal ideals of $\A_{h}$ are the prime ideals of height two. (The \emph{height} of a prime ideal is the largest length of a chain of prime ideals contained in it, or is said to be $\infty$ if no bound exists.)  In Proposition \ref{P:prim} below,  we determine  the  primitive ideals of $\A_h$.   Our argument uses the following result, which holds quite generally. 
\medskip

\begin{lemma} \label{lem:finDimPrimMax}
Let $\A$ be an associative $\FF$-algebra.   Suppose $\M$ is a finite-dimensional irreducible $\A$-module, and let $\mathsf P = \ann_{\A}(\M)$.  Then $\mathsf{P}$ is a maximal ideal of $\A$, and $\A / \mathsf{P} \cong \mathsf{End}_{\mathsf{D}}(\M)$, where $\mathsf{D} = \mathsf{End}_{\A}(\M)$.
\end{lemma}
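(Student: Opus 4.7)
The plan is to invoke the classical Jacobson density theorem, which applies cleanly here because $\M$ has finite dimension over $\FF$. First I would observe that by Schur's lemma, $\mathsf{D} = \End_\A(\M)$ is a division ring (the standard argument: any nonzero $\A$-module endomorphism of an irreducible module has zero kernel and image equal to $\M$, hence is invertible). Since $\M$ is finite dimensional over $\FF$ and $\FF \subseteq \mathsf{D}$, $\M$ is automatically finite dimensional as a left $\mathsf{D}$-vector space.

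Next, I would consider the natural $\FF$-algebra homomorphism $\varphi \colon \A \to \End_\mathsf{D}(\M)$ sending $a \in \A$ to left multiplication by $a$ on $\M$. This makes sense because the $\A$-action commutes with $\mathsf{D}$ by definition of $\mathsf{D}$, so each $\varphi(a)$ is indeed $\mathsf{D}$-linear. By construction $\ker(\varphi) = \ann_\A(\M) = \mathsf{P}$, so $\varphi$ factors to an injection $\bar\varphi \colon \A/\mathsf{P} \hookrightarrow \End_\mathsf{D}(\M)$.

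The main step is surjectivity of $\bar\varphi$. Here I would apply the Jacobson density theorem: since $\M$ is an irreducible $\A$-module, the image $\varphi(\A)$ acts densely on $\M$ as a $\mathsf{D}$-module, meaning that for any $\mathsf{D}$-linearly independent $v_1, \dots, v_n \in \M$ and any $w_1, \dots, w_n \in \M$ there is $a \in \A$ with $a v_i = w_i$ for all $i$. Because $\M$ is finite dimensional over $\mathsf{D}$, density is equivalent to surjectivity onto $\End_\mathsf{D}(\M)$ (picking a $\mathsf{D}$-basis $v_1, \dots, v_n$ of $\M$, density lets us realize any prescribed assignment of images, which determines an arbitrary $\mathsf{D}$-linear endomorphism). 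Hence $\bar\varphi$ is an isomorphism $\A/\mathsf{P} \cong \End_\mathsf{D}(\M)$.

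Finally, $\End_\mathsf{D}(\M)$ is a matrix algebra over the division ring $\mathsf{D}$, hence is a simple ring. Therefore $\A/\mathsf{P}$ is simple, which is exactly the statement that $\mathsf{P}$ is a maximal two-sided ideal of $\A$. The main obstacle is really just invoking density correctly; no hard computation is needed, but one should be careful that the $\A$-action on $\M$ genuinely commutes with $\mathsf{D}$ on the correct side so that $\varphi$ lands in $\End_\mathsf{D}(\M)$ rather than in a twisted version of it.
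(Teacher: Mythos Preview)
Your proof is correct and follows essentially the same route as the paper: Schur's Lemma to get that $\mathsf{D}$ is a division ring with $\M$ finite dimensional over it, then the Jacobson Density Theorem to conclude $\A/\mathsf{P}\cong\End_{\mathsf{D}}(\M)$, hence $\A/\mathsf{P}$ is simple and $\mathsf{P}$ is maximal. The paper's version is more terse, but the ingredients and their order are the same.
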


\begin{proof}
The representation $\A \to \mathsf{End}_{\FF}(\M)$ induces an injective homomorphism 
\begin{equation}\label{E:jac-2}
\A/\mathsf{P}\hookrightarrow\mathsf{End}_{\FF}(\M).
\end{equation}
Let $\mathsf{D}=\mathsf{End}_{\A}(\M)$. By Schur's Lemma, $\mathsf{D}$ is a division ring containing $\FF\mathsf{id}_{\M}$,  and  $\M$ is finite dimensional over $\mathsf{D}$.  The image of \eqref{E:jac-2} is contained in $\mathsf{End}_{\mathsf{D}}(\M)$,  and the Jacobson Density Theorem implies that $\A/\mathsf{P}\cong\mathsf{End}_{\mathsf{D}}(\M)$. Hence $\A/\mathsf{P}$ is simple,  and $\mathsf{P}$ is maximal.
\end{proof}
 
\begin{prop}\label{P:prim}
An ideal $\mathsf{P}$ of $\A_{h}$ is primitive if and only if $\mathsf{P}$ is maximal,  or $\chara(\FF)=0$ and $\mathsf{P}=(0)$. In particular, if $\chara(\FF)=0$,  then $\A_{h}$ is a primitive algebra,  and all  infinite-dimensional irreducible $\A_{h}$-modules are faithful.
\end{prop}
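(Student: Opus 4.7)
The plan is to treat the two directions separately. The implication maximal $\Rightarrow$ primitive holds in any unital ring: if $\mathsf{P}$ is maximal, the simple ring $\A_h/\mathsf{P}$ has an irreducible module (quotient by any maximal left ideal), and its annihilator in $\A_h$ must be $\mathsf{P}$. For the converse when $\chara(\FF) = p > 0$, Proposition~\ref{P:findimp} says every irreducible $\A_h$-module is finite-dimensional, so Lemma~\ref{lem:finDimPrimMax} immediately makes every primitive ideal maximal. The substantive content is therefore all in characteristic zero.

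In characteristic zero I would first construct a faithful irreducible module, which shows $(0)$ is primitive. Pick any monic prime $f \in \DD = \FF[x]$ with $f \nmid h$ (infinitely many exist since $h$ has only finitely many prime factors), set $\m = \DD f$, and let $\UU(\m)$ be the induced module, which is irreducible by Theorem~\ref{T:induce}(iii). Using the basis $\{y^k x^\ell u_\m \mid k \ge 0,\; 0 \le \ell < \deg f\}$ and the expansion \eqref{eq:xynid}, one checks that $r \in \DD$ annihilates $\UU(\m)$ if and only if $\delta^j(r) \in \m$ for every $j \ge 0$. The main calculation is to upgrade this to $r = 0$: if $r \in \m^n \setminus \m^{n+1}$, write $r = f^n s$ with $f \nmid s$, so that
\[
\delta(r) = r' h = f^{n-1}\bigl(n f' s + f s'\bigr) h .
\]
Since $\chara(\FF) = 0$ the integer $n$ is nonzero in $\FF$, while $f$ prime gives $f \nmid f'$, and by construction $f \nmid h$; hence $\delta(r) \in \m^{n-1} \setminus \m^n$. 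Iterating, $\delta^n(r) \notin \m$, contradicting $\delta^n(r)\in\m$ unless $r \in \bigcap_n \m^n = (0)$. Thus $\ann_\DD(\UU(\m)) = 0$, and by the ``nonzero ideals of $\A_h$ meet $\DD$ nontrivially'' fact embedded in the proof of Lemma~\ref{lem: construct1}(iv), this upgrades to $\ann_{\A_h}(\UU(\m)) = 0$.

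Having shown $(0)$ primitive, I close the converse in characteristic zero as follows. Let $\mathsf{P} = \ann_{\A_h}(\M)$ be primitive with $\mathsf{P} \neq 0$; the same intersection-with-$\DD$ fact forces $\M$ to be $\DD$-torsion, so by Proposition~\ref{prop:genwt}(iii), $\M = \M^\m$ for some $\m \in \Max(\DD)$. Either $\m$ is $\delta$-invariant---so its generator divides $h$ by Lemma~\ref{lem:delta-inv}, and Theorem~\ref{T:induce}(ii) forces $\M \cong \Lmg$ finite-dimensional, making $\mathsf{P}$ maximal by Lemma~\ref{lem:finDimPrimMax}---or $\m$ is not $\delta$-invariant, in which case any $0 \ne v \in \M_\m$ together with Proposition~\ref{P:universal} yields a nonzero $\A_h$-map $\UU(\m) \to \M$, necessarily an isomorphism by irreducibility of both, so $\M \cong \UU(\m)$, contradicting $\mathsf{P} \ne 0$ by the faithfulness established above. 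The ``in particular'' clause follows from the same dichotomy: any irreducible $\A_h$-module with nonzero annihilator is forced into the finite-dimensional $\Lmg$ branch, so every infinite-dimensional irreducible must be faithful. I expect the main obstacle to be the $\m$-adic depth step underpinning $\ann_\DD(\UU(\m)) = 0$; this is exactly where the characteristic-zero hypothesis is indispensable (the prefactor $n$ must not vanish) and where both $f \nmid h$ and $f \nmid f'$ are needed.
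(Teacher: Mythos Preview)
Your proof is correct, but it takes a genuinely different route from the paper's in the characteristic-zero case. The paper handles a nonzero primitive $\mathsf{P}$ by invoking the classification of height-one primes from \cite[Thm.~7.7]{BLO1}: any nonzero prime of $\A_h$ contains a prime factor of $h$, hence $h\in\mathsf{P}$, so $\A_h/\mathsf{P}$ is commutative; a commutative primitive ring is a field, forcing $\mathsf{P}$ maximal and $\M\cong\A_h/\mathsf{P}$ finite dimensional. Faithfulness of $\UU(\m)$ then comes for free by contraposition (it is infinite dimensional, so its annihilator must be $(0)$). You instead stay entirely within the module theory developed in this paper: you prove faithfulness of $\UU(\m)$ directly via the $\m$-adic descent computation $\delta(\m^n\setminus\m^{n+1})\subseteq\m^{n-1}\setminus\m^n$, and you treat $\mathsf{P}\neq 0$ by showing $\M$ is $\DD$-torsion, then splitting on whether $\m$ is $\delta$-invariant, using Theorem~\ref{T:induce} in each branch. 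Your argument is more self-contained (no appeal to the prime-ideal structure from \cite{BLO1}) at the price of the explicit descent calculation; the paper's argument is shorter but relies on that external structural input.

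Two small points worth tightening. First, when you write ``any $0\ne v\in\M_\m$'' in the non-$\delta$-invariant branch, you should note why $\M_\m\neq 0$: this comes from the proof of Proposition~\ref{prop:genwt}(ii), where the element $u$ with maximal $\DD$-annihilator actually lies in $\M_\m$ (and $\m$ is uniquely determined by $\M=\M^\m$). Second, your faithfulness argument is carried out for one chosen $\m$ but is then reused for the $\m$ determined by $\M$; this is fine because in that branch $f\nmid h$ by Lemma~\ref{lem:delta-inv}, so the same descent applies, but it is worth saying so explicitly.
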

\begin{proof}
As mentioned earlier, any maximal ideal is primitive. Let $\mathsf{P}$ be a primitive ideal of $\A_{h}$,  and let $\M$ be an irreducible $\A_{h}$-module with annihilator $\mathsf{P}$. 

If  $\chara(\FF)=p>0$, then by Proposition~\ref{P:findimp}, $\M$ is finite dimensional, 
and Lemma \ref{lem:finDimPrimMax} implies that $\mathsf{P}$ is maximal.  
 
Now assume $\chara(\FF)=0$. If $\mathsf{P}\neq (0)$,  then $\mathsf{P}$ contains a height-one prime ideal. By \cite[Thm.\ 7.7]{BLO1},  we deduce that $\mathsf{P}$ contains a prime factor of $h$.  But then  $h\in\mathsf{P}$,  and $\A_{h}/\mathsf{P}$ is commutative, as $[y, x]\in\mathsf{P}$. Hence $\M\cong\A_{h}/\mathsf{P}$,  and $\mathsf{P}$ must be a maximal ideal. In particular, in this case $\A_{h}/\mathsf{P}$ is finite dimensional (it is a finitely generated field extension of $\FF$),  and thus $\M$ is also finite dimensional. This shows that if $\M$ is an infinite-dimensional irreducible $\A_{h}$-module,  then $\mathsf{P}=\ann_{\A_{h}}(\M)=(0)$ and $\M$ is faithful. It remains to show that $(0)$ is a primitive ideal when $\chara(\FF) = 0$.  But that follows from the existence of  infinite-dimensional irreducible $\A_{h}$-modules. Indeed, by Theorem~\ref{T:induce}\,(iii), if $\chara(\FF) = 0$ and $f\in\DD$ is a prime polynomial
which is not a factor of $h$ (e.g. if $f=x-\lambda$ with $h(\lambda)\neq 0$), then the induced generalized weight module $\UU(\m)$ for $\m = \DD f$  is irreducible and infinite dimensional, thus faithful.    \end{proof}

\begin{remark} Iyudu \cite{I12} has shown that this result holds for the algebra $\A_{x^2}$ over
algebraically closed fields of characteristic 0.   It should be noted
that the roles of $x$ and $y$ in \cite{I12} are reversed,  and the ideal (0) needs to be added to 
statement of Corollary 5.4 in \cite{I12}.   \end{remark}

In the proof of Proposition \ref{P:prim}, we have seen that when $\chara(\FF) = 0$
and $\mathsf{P}$ is a nonzero primitive ideal, then $\mathsf{P}$ is a maximal ideal containing a  prime factor $f$  of $h$.      Let  $\m = \DD f$.   Since $\A_h/\mathsf{P} = (\A_h/\mathsf{P})_\m$
is an irreducible weight module,  by Theorem \ref{T:induce}\,(ii)  there exists a monic  
prime  polynomial  $\bar g$ in $\PR_\m = (\DD/\m)[y]$  such that
$\A_h/\mathsf{P}\cong \Lmg$.   Hence, $\mathsf{P}$ is the annihilator of 
one of the finite-dimensional irreducible modules $\Lmg$.    We have the following analogue
of Duflo's result on the primitive ideals of the universal enveloping algebra of a finite-dimensional complex semisimple Lie algebra (see \cite{Du77}).  

\begin{cor}\label{C:annchar0} \begin{itemize} \item[{\rm (a)}] Assume $\chara(\FF) = 0$ and $h \not \in \FF^*$.   A primitive ideal of $\A_h$ is $(0)$ or is the annihilator of an irreducible module $\Lmg$ for $\m = \DD f$, where $f$ is a prime factor of $h$, and $\bar g$ is a monic prime polynomial of $\PR_\m = (\DD/\m)[y]$.  The primitive ideal $(0)$ is the annihilator of $\UU(\m)$  for any maximal ideal $\m$ of $\DD$ which is not $\delta$-invariant.
  \item[{\rm (b)}]  Over any field $\FF$,  if  $\m = \DD f$, where $f$ is a prime factor of $h$, and if 
$g = \sum_{j \geq 0} y^j g_j \in \A_h$ (where $g_j \in \DD$ for all $j$)  has the property that $\bar g  = \sum_{j \geq 0} y^j \bar g_j$ is a monic prime polynomial  in $\PR_\m$,  then  $\ann_{\A_h}(\Lmg) = \A_h g + \A_h \m$.  \end{itemize}    \end{cor}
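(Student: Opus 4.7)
For part (a), Proposition~\ref{P:prim} already shows that every primitive ideal of $\A_h$ in characteristic zero is either $(0)$ or maximal, so I only need to exhibit the claimed realizations. If $\mathsf{P}$ is a nonzero primitive ideal, the argument in the proof of Proposition~\ref{P:prim} shows that $\mathsf{P}$ contains some prime factor $f$ of $h$, so $\mathsf{P} \supseteq \m = \DD f$. Then $\A_h/\mathsf{P}$ is an irreducible weight module with $(\A_h/\mathsf{P}) = (\A_h/\mathsf{P})_\m$, and Theorem~\ref{T:induce}\,(ii) realizes it as some $\Lmg$; thus $\mathsf{P} = \ann_{\A_h}(\Lmg)$. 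For the zero primitive ideal, if $\m \in \Max(\DD)$ is not $\delta$-invariant, Theorem~\ref{T:induce}\,(iii) gives that $\UU(\m)$ is irreducible, and since it is clearly infinite dimensional, the last sentence of Proposition~\ref{P:prim} forces it to be faithful.

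For part (b), the plan is to factor the $\A_h$-action on $\Lmg$ through the commutative polynomial ring $\PR_\m = (\DD/\m)[y]$ and then apply elementary PID theory. Since $f$ divides $h$, the ideal $\m$ is $\delta$-invariant, so by Lemma~\ref{lem:norm-delta} the set $\A_h\m = \m\A_h$ is a two-sided ideal of $\A_h$, and one verifies that it annihilates $\UU(\m)$ (and hence its quotient $\Lmg$): for $r \in \m$ and $a \in \A_h$, write $ra = a'r'$ with $r' \in \m$ and $a' \in \A_h$, so that $(ra)\otimes\bar 1 = a' \otimes r'\bar 1 = 0$. Consequently the action descends through the projection $\pi\colon \A_h \to \A_h/\A_h\m \cong \PR_\m$, under which $\Lmg$ is identified with $\PR_\m/\PR_\m\bar g$ and $g$ maps to $\bar g$.

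Because $\bar g$ is a monic prime in the PID $\PR_\m$, the quotient $\PR_\m/\PR_\m\bar g$ is a simple $\PR_\m$-module whose annihilator is precisely $\PR_\m\bar g$. Pulling back through $\pi$ yields
\[
\ann_{\A_h}(\Lmg) \;=\; \pi^{-1}(\PR_\m\bar g) \;=\; \A_h g + \ker\pi \;=\; \A_h g + \A_h\m,
\]
as claimed. The only routine verification is the set-theoretic identity $\pi^{-1}(\PR_\m\bar g) = \A_h g + \A_h\m$: given $a \in \A_h$ with $\pi(a) = \bar b\bar g$, lift $\bar b$ to any $b \in \A_h$ and note that $a - bg \in \ker\pi = \A_h\m$. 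I do not anticipate a real obstacle; the crux is the $\delta$-invariance of $\m$, which permits the reduction to the commutative ring $\PR_\m$ where the annihilator analysis is elementary.
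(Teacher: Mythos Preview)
Your proof is correct and follows essentially the same approach as the paper. For part (a) you reconstruct exactly the discussion the paper places immediately before the corollary, and for part (b) your argument via the quotient map $\pi\colon \A_h \to \A_h/\A_h\m \cong \PR_\m$ and pullback of the annihilator $\PR_\m\bar g$ is just a cleaner packaging of the paper's direct computation: the paper also observes that the action of $a$ on $\Lmg$ agrees with that of $\bar a$ on $\PR_\m/\PR_\m\bar g$, deduces $\bar g \mid \bar a$, lifts the quotient to some $b \in \A_h$, and concludes $a - bg \in \m[y] = \A_h\m$.
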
 

\begin{proof}   Only part (b)  remains to be shown.   
Clearly,  $\A_h g + \A_h\m \subseteq 
\ann_{\A_h}(\Lmg)$.      For the other direction,  assume $a = \sum_{j \geq 0}y^j r_j \in 
\ann_{\A_h}(\Lmg)$.    Since the action of $a$ on $\Lmg = \UU(\m)/\bar g \UU(\m)$ is
the same as the action of $\bar a = \sum_{j \geq 0}y^j \bar r_j $ on $\PR_\m/\bar g \PR_\m$,   it must be that $\bar a$ is divisible
by $\bar g$.    Thus, there exists a $b = \sum_{j \geq 0} y^j b_j \in \A_h$ (with $b_j \in \DD$ for all $j$)  so that $\bar a = \bar b \bar g$ in $\PR_\m$,  where $\bar b = \sum_{j \geq 0} y^j \bar b_j$.  Hence $a-bg
\in \m[y] = \A_h \m$,    and  $a \in \A_h g + \A_h \m$.       \end{proof}

\begin{cor}\label{C:anniso}  Assume $\LL(\m_i,\bar g_i)$ for $i = 1,2$ are two irreducible $\A_h$-modules  as in \hbox{\rm Corollary \ref{C:annchar0}\,(b)}.  Then the following are equivalent:
\begin{itemize} 
\item[{\rm (a)}] $\LL(\m_1,\bar g_1) \cong \LL(\m_2,\bar g_2)$.
\item[{\rm (b)}] $\m_{1}=\m_{2}$,  and $\bar g_1=\bar g_2$ as polynomials in $\PR_{\m_{1}}=\PR_{\m_{2}}$.
\item[{\rm (c)}] $\ann_{\A_h}(\LL(\m_1, \bar g_1)) =  \ann_{\A_h}(\LL(\m_2, \bar g_2))$.
\end{itemize}
\end{cor}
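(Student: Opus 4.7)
The plan is to verify the cycle \textrm{(b)} $\Rightarrow$ \textrm{(a)} $\Rightarrow$ \textrm{(c)} $\Rightarrow$ \textrm{(b)}. The implication \textrm{(b)} $\Rightarrow$ \textrm{(a)} is immediate: when $\m_1 = \m_2 = \m$ and $\bar g_1 = \bar g_2 = \bar g$ in $\PR_\m$, the modules $\LL(\m_i, \bar g_i) = \UU(\m)/\bar g \UU(\m)$ coincide. The implication \textrm{(a)} $\Rightarrow$ \textrm{(c)} is the general fact that isomorphic modules share the same annihilator.

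The content lies in \textrm{(c)} $\Rightarrow$ \textrm{(b)}. First I would recover each $\m_i$ from the annihilator by intersecting with $\DD$. Since $\LL(\m_i, \bar g_i)$ is a weight module on which $\m_i$ acts as zero, the inclusion $\m_i \subseteq \ann_{\A_h}(\LL(\m_i, \bar g_i)) \cap \DD$ is clear. Conversely, if $r \in \DD$ annihilates $\LL(\m_i, \bar g_i)$, it kills the cyclic generator $\bar u_{\m_i}$; viewing the module as $\PR_{\m_i}/\PR_{\m_i}\bar g_i$ and using that $\bar g_i$ is a prime polynomial of $y$-degree at least $1$ in $(\DD/\m_i)[y]$ forces $\bar r = 0$ in $\DD/\m_i$, so $r \in \m_i$. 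Thus $\m_i = \ann_{\A_h}(\LL(\m_i, \bar g_i)) \cap \DD$, and hypothesis \textrm{(c)} yields $\m_1 = \m_2 =: \m$.

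Next I identify the algebra through which the action on $\LL(\m, \bar g_i)$ factors. Since $f$ is a prime factor of $h$ we have $h \in \m$, so the $\FF$-linear map $\A_h \to \PR_\m$ sending $\sum r_j y^j \mapsto \sum \bar r_j y^j$ respects the defining relation $yx - xy = h$ (both sides vanish in the commutative ring $\PR_\m$). Combined with the $\delta$-invariance of $\m$, which by Lemma \ref{lem:norm-delta} gives $\A_h \m = \m \A_h = \bigoplus_{n\geq 0} \m y^n$, this produces a surjective $\FF$-algebra homomorphism whose kernel is precisely $\A_h \m$, yielding $\A_h/\A_h \m \cong \PR_\m$ as $\FF$-algebras.

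Under this isomorphism, Corollary \ref{C:annchar0}\,(b) sends $\ann_{\A_h}(\LL(\m, \bar g_i)) = \A_h g_i + \A_h \m$ to the principal ideal $\PR_\m \bar g_i$. Hypothesis \textrm{(c)} then translates to $\PR_\m \bar g_1 = \PR_\m \bar g_2$ inside the PID $\PR_\m$, and since both $\bar g_i$ are monic, $\bar g_1 = \bar g_2$. The main, though mild, obstacle is the identification $\A_h/\A_h \m \cong \PR_\m$ and the corresponding reinterpretation of Corollary \ref{C:annchar0}\,(b); once these are in place, the remainder is routine PID arithmetic.
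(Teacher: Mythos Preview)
Your proof is correct and follows essentially the same approach as the paper: both arguments recover $\m_i$ as $\ann_{\A_h}(\LL(\m_i,\bar g_i))\cap\DD$ and then recover $\bar g_i$ as the monic generator of $\ann_{\A_h}(\LL(\m_i,\bar g_i))/\A_h\m_i$ inside $\PR_{\m_i}$, using Corollary~\ref{C:annchar0}\,(b). The only difference is that you spell out the identification $\A_h/\A_h\m\cong\PR_\m$ explicitly, whereas the paper relies on this having been established earlier in Section~4.1.
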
 
\begin{proof} For $i=1,2$, the maximal ideal  \  $\m_i$ \  is determined by \ $\m_i\, =\, \ann_\DD\big(\LL(\m_i,\bar g_i)\big)=$\newline  $\DD\cap \ann_{\A_{h}}\big(\LL(\m_i,\bar g_i)\big).$   In particular, if the generator $f_i$ of $\m_i$
 is assumed to be monic, it is uniquely determined. Then $\bar g_i$ is the monic prime polynomial in $(\DD/\m_{i})[y]$ which annihilates $\LL(\m_i,\bar g_i)$.  Equivalently, it is the generator of  $\ann_{\A_h}(\LL(\m_i, \bar g_i))/\A_{h}\m_{i}$ as an ideal of $\PR_{\m_{i}}$. Since $\ann_{\A_h}(\LL(\m_i, \bar g_i))$ is determined by the isomorphism class of $\LL(\m_i,\bar g_i)$ we have $(a)\implies (c)$, and $(c)\implies (b)$ by the above. Finally, since $\m_i$ and $\bar g_i$ determine
$\LL(\m_i,\bar g_i)$, we have $(b)\implies (a)$. 
  \end{proof}   

The equivalence of (a) and (c) in the previous corollary  is a general phenomenon.  We include a 
proof of this equivalence in a very general context next for the convenience of the reader,  and also because the following proposition can be used to deduce information about  the primitive ideals in Corollary \ref{C:algclo} below. 
  
 \begin{prop}\label{P:annVW} 
Let $\A$ be an associative $\FF$-algebra,  and let $\V,\W$ be finite-dimensional irreducible $\A$-modules. Then $\V\cong\W$ if and only if $\ann_\A(\V)=\ann_\A(\W)$.  Thus,  the isomorphism classes of finite-dimensional irreducible $\A$-modules are in bijection with the maximal ideals of $\A$ of finite co-dimension.
\end{prop}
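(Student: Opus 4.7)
The plan is to prove the equivalence through Lemma \ref{lem:finDimPrimMax} and the classical fact that a simple artinian ring has a unique simple module up to isomorphism.

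The forward implication is immediate: an $\A$-module isomorphism $\V\to\W$ transports annihilators, so $\ann_\A(\V)=\ann_\A(\W)$. For the reverse implication, set $\mathsf P=\ann_\A(\V)=\ann_\A(\W)$ and apply Lemma \ref{lem:finDimPrimMax} twice: $\A/\mathsf P\cong\mathsf{End}_{\mathsf D_\V}(\V)$ with $\mathsf D_\V=\mathsf{End}_\A(\V)$, and similarly $\A/\mathsf P\cong\mathsf{End}_{\mathsf D_\W}(\W)$. Thus $\A/\mathsf P$ is a simple artinian ring (a finite matrix ring over a division ring), and both $\V$ and $\W$ are faithful simple modules over $\A/\mathsf P$. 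Since a simple artinian ring has, up to isomorphism, a unique simple module (realized as any minimal left ideal), we conclude $\V\cong\W$ as $\A/\mathsf P$-modules, and hence as $\A$-modules.

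For the bijection statement, I would define the map sending the isomorphism class of a finite-dimensional irreducible $\A$-module $\V$ to $\ann_\A(\V)$. By Lemma \ref{lem:finDimPrimMax}, this annihilator is a maximal ideal, and since $\A/\ann_\A(\V)\hookrightarrow\mathsf{End}_\FF(\V)$, it has finite co-dimension. The map is well defined on isomorphism classes, and the equivalence just established shows it is injective. For surjectivity, take any maximal ideal $\mathsf M$ of finite co-dimension in $\A$. Then $\A/\mathsf M$ is a finite-dimensional simple $\FF$-algebra, hence isomorphic to a matrix ring $M_n(\mathsf D)$ over a finite-dimensional division algebra $\mathsf D$ over $\FF$. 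Let $\V$ be its (unique) simple module, pulled back to an $\A$-module via the projection $\A\twoheadrightarrow\A/\mathsf M$; this $\V$ is finite dimensional over $\FF$, irreducible, and satisfies $\mathsf M\subseteq\ann_\A(\V)$. Since $\V$ is faithful over $\A/\mathsf M$ (as $M_n(\mathsf D)$ is simple and acts nontrivially on $\V$), we get $\ann_\A(\V)=\mathsf M$, establishing surjectivity.

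The main obstacle is essentially notational: the content reduces to invoking Artin--Wedderburn theory for $\A/\mathsf P$, which is already set up by Lemma \ref{lem:finDimPrimMax} via the Jacobson Density Theorem. Once one has that $\A/\mathsf P$ is a simple artinian ring, the uniqueness of its simple module is standard and carries the whole argument.
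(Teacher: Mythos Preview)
Your proof is correct and follows essentially the same route as the paper: both use Lemma \ref{lem:finDimPrimMax} to identify $\A/\mathsf P$ with a simple Artinian ring and then invoke the uniqueness of the simple module over such a ring. You are slightly more explicit than the paper in verifying surjectivity of the bijection (constructing $\V$ from a given maximal ideal of finite co-dimension), which the paper leaves implicit, but the substance is identical.
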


\begin{proof}
Assume  $\phi: \V\rightarrow\W$ is a surjective $\A$-homomorphism.  Then 
$$\ann_{\A}(\W)= \ann_{\A}(\phi(\V)) \supseteq \ann_\A(\V),$$ so $\ann_{\A}(\W)\supseteq \ann_{\A}(\V)$,  and equality holds if $\phi$ is an isomorphism.

Conversely, suppose $\V$ is a finite-dimensional irreducible  $\A$-module,  and let  $\mathsf{P} = \ann_{\A}(\V)$.  Lemma \ref{lem:finDimPrimMax} implies that $\mathsf{P}$ is maximal and of finite co-dimension in $\A$.  Furthermore, if $\W$ is another irreducible $\A$-module with $\ann_\A(\W)=\ann_\A(\V)=\mathsf{P}$, then $\V$ and $\W$ are two irreducible modules over the simple Artinian ring $\mathsf{\End}_{\mathsf{D}}(\V) \cong \A / \mathsf{P}$, where $\mathsf{D}=\mathsf{\End}_{\A}(\V)$. But this ring has only one irreducible module up to isomorphism.  Thus $\V\cong\W$ as $\A/\mathsf{P}$-modules, hence also as $\A$-modules.   \end{proof}

\begin{section}{Irreducible $\A_h$-modules when $\chara(\FF) = 0$} \end{section} 

\begin{subsection}{Irreducible generalized weight modules for $\A_h$} \end{subsection} 

It is an immediate consequence of  Theorem \ref{T:induce} and the fact that a maximal ideal $\m = \DD f$ is $\delta$-invariant if and only if $f$ divides $h$ when $\chara(\FF) = 0$ that the following holds.

\medskip
\begin{cor}\label{cor:genwt0} Assume $\chara(\FF) = 0$.  Let $\V$  be an irreducible generalized
weight $\A_h$-module.  Then $\V = \V^\m$ for  some maximal ideal $\m = \DD f$ of $\DD$
 generated by a prime polynomial $f$.
  \begin{itemize} 
 \item[{\rm (i)}]  If $f$ is a factor of $h$,  then $\V = \V_\m$  and
 $\V \cong  \Lmg = \UU(\m)/\bar g \UU(\m)$ for some monic prime
 polynomial $\bar g \in (\DD/\m)[y]$.   
\item[{\rm (ii)}] If $f$ is not a  factor of $h$,  then $\V$ is isomorphic
to the induced module  $\UU(\m) = \A_h \ot_\DD \DD/\m$.  
\end{itemize}
\end{cor}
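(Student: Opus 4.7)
The proof is essentially a bookkeeping argument that assembles earlier results, consistent with the remark that the corollary is \emph{immediate} from Theorem~\ref{T:induce} together with Lemma~\ref{lem:delta-inv}. The plan is as follows.

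First I would extract the maximal ideal. By Proposition~\ref{prop:genwt}(ii)--(iii), an irreducible generalized weight $\A_h$-module $\V$ satisfies $\V = \V^{\m}$ for some $\m \in \Max(\DD)$, and since $\DD = \FF[x]$ is a PID, $\m = \DD f$ for a uniquely determined monic prime polynomial $f \in \DD$. This produces the $f$ in the statement and handles the opening claim.

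Next I would produce a nonzero weight vector and invoke the universal property. Since $\V \neq 0$ and $\V = \V^{\m}$, choose any nonzero $u \in \V$ and let $k \geq 1$ be minimal with $\m^{k} u = 0$; then any nonzero element $v \in \m^{k-1} u$ lies in $\V_\m$. Irreducibility gives $\V = \A_h v$, so Proposition~\ref{P:universal} provides a surjective $\A_h$-module homomorphism $\UU(\m) \twoheadrightarrow \V$ sending $u_\m \mapsto v$.

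Finally I would split on whether $f$ divides $h$ and apply Theorem~\ref{T:induce}. In case (i), $f \mid h$ implies $\delta(\m) = f'h\DD \subseteq \m$, so $\m$ is $\delta$-invariant, and Proposition~\ref{prop:genwt}(iii) upgrades $\V = \V^{\m}$ to $\V = \V_{\m}$; Theorem~\ref{T:induce}(ii) then identifies every irreducible quotient of $\UU(\m)$ as $\Lmg$ for some monic prime $\bar g \in \PR_\m = (\DD/\m)[y]$, and in particular $\V \cong \Lmg$. In case (ii), Lemma~\ref{lem:delta-inv} (using $\chara(\FF) = 0$) says $\m$ is not $\delta$-invariant, and Theorem~\ref{T:induce}(iii) says $\UU(\m)$ is already irreducible, so the surjection $\UU(\m) \twoheadrightarrow \V$ must be an isomorphism.

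There is no serious obstacle. The only subtle point is that in case (ii) one cannot a priori assert $\V = \V_{\m}$, so one needs the filtration argument above to exhibit a nonzero element of $\V_\m$ before Proposition~\ref{P:universal} can be applied; everything else is a direct citation of the quoted theorems.
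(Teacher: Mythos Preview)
Your proposal is correct and follows the same route the paper intends: the corollary is stated as an immediate consequence of Theorem~\ref{T:induce} together with the characterization of $\delta$-invariant maximal ideals (Lemma~\ref{lem:delta-inv}), and your write-up simply unpacks this, invoking Proposition~\ref{prop:genwt} for $\V = \V^\m$ and Proposition~\ref{P:universal} for the surjection $\UU(\m)\twoheadrightarrow \V$. Your observation that in case~(ii) one must first exhibit a nonzero element of $\V_\m$ (via the minimal-$k$ filtration) before applying Proposition~\ref{P:universal} is exactly the right detail to fill in; note that in case~(i) this step is actually redundant, since Theorem~\ref{T:induce}(ii) already asserts directly that any irreducible $\V = \V^\m$ is isomorphic to some $\Lmg$. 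One trivial quibble: the equality ``$\delta(\m) = f'h\DD$'' is not literally correct (apply $\delta$ to $rf$), though the desired containment $\delta(\m)\subseteq \m$ certainly holds once $f\mid h$.
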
  

\begin{remark} When $h \in \FF^\ast,$   the algebra  $\A_h$ is isomorphic to the Weyl algebra $\A_1$.
There are no prime polynomial  factors of $h$ in this case,  Thus, when $\chara(\FF) = 0$,  all the irreducible generalized
weight modules for $\A_1$ are induced modules $\UU(\m) = \A_1 \ot_\DD \DD/\m$ for some maximal ideal 
$\m$ of $\DD$ by Corollary \ref{cor:genwt0}.  Modules for the Weyl algebra $\A_1$, and more generally
for the Weyl algebras in arbitrarily many variables,  and for generalized Weyl algebras  over  fields of arbitrary characteristic,  have been studied
extensively by many authors (see for example, \cite{B1}, \cite{B2}, \cite{Bl81}, \cite{DGO}, \cite{C},  \cite{BBF}). \end{remark}   

\begin{subsection}{Finite-dimensional irreducible $\A_h$-modules when $\chara(\FF) = 0$} \end{subsection}

When $\chara(\FF) = 0$, Lemma \ref{lem:ann} shows that  for any finite-dimensional irreducible $\A_h$-module  $\V$,  there is a $\delta$-invariant maximal ideal $\m = \DD f$ such that $\V = \V_\m$, and $f$ is a prime
factor of $h$.    Here we determine more information about these finite-dimensional modules first in the algebraically closed case, then for
arbitrary $\FF$.  \bigskip

\begin{subsubsection}{$\FF$ algebraically closed of characteristic 0} \end{subsubsection}  

Let  $\M$ be a finite-dimensional irreducible $\A_h$-module.  As noted above,  we may assume
$\M = \M_\m$ where $\m$ is the maximal ideal generated by a prime factor $f$ of $h$, and  $x$ and $y$ are
 commuting transformations on $\M$ (compare \eqref{eq:xyw}).    When $\FF$ is algebraically closed, this implies that $x$ and $y$
 have a common eigenvector, which then is a basis for $\M$ by irreducibility.   
  Since    $f$  must be  a linear factor of $h$ in this case,  we have the following.    \medskip

\begin{thm}\label{thm:fd} Assume $\FF$ is an algebraically closed field of characteristic 0 and $h \not \in \FF$.   Then every finite-dimensional irreducible $\A_h$-module $\M$ is  one dimensional.    In particular,   there exist  $\lambda,\mu \in \FF$, with
$\lambda$  a root of $h$,   so that   $\M \cong \V_{\lambda, \mu} := \FF v_{\lambda, \mu}$, where the $\A_h$-module action is given by $x.v_{\lambda,\mu} = \lambda v_{\lambda,\mu}$ and $y.v_{\lambda, \mu} = \mu v_{\lambda,\mu}.$
Thus, in the notation of Theorem \ref{T:induce},   $\M \cong \V_{\lambda,\mu} \cong \Lmg$, where
$f = x-\lambda$,  $\m = \DD f$, and $g = y-\mu$.  
\end{thm}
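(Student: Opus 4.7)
The plan is to combine Lemma \ref{lem:ann}, the relation $[y,x]=h$, and the fact that over an algebraically closed field a pair of commuting operators on a nonzero finite-dimensional space shares an eigenvector.

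First I would apply Lemma \ref{lem:ann} to obtain a maximal ideal $\m=\DD f$ with $f$ a monic prime polynomial such that $\M=\M^{\m}$. In characteristic zero the alternative conclusion of that lemma (in which $\ann_{\DD}(\M)=\m^{p}$ with $f\notin\FF[x^{p}]$) is unavailable, so $\m$ is $\delta$-invariant and $\M=\M_{\m}$. Lemma \ref{lem:delta-inv} then forces $f$ to be a prime factor of $h$; since $h\notin\FF$ and $\FF$ is algebraically closed, such a prime factor is necessarily linear, so $f=x-\lambda$ for some root $\lambda$ of $h$.

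Next, because $f$ annihilates $\M$ and $f$ divides $h$, the element $h$ also acts as zero on $\M$. Since $[y,x]=h$, the operators induced by $x$ and $y$ on $\M$ therefore commute. Over an algebraically closed field, a nonzero finite-dimensional space carrying two commuting linear operators admits a common eigenvector $v$; the eigenvalue of $x$ must equal $\lambda$ (because $x-\lambda$ annihilates $\M$), and the eigenvalue of $y$ is some $\mu\in\FF$. As $\A_{h}$ is generated by $x$ and $y$, the space $\FF v$ is then an $\A_{h}$-submodule, so irreducibility gives $\M=\FF v\cong\V_{\lambda,\mu}$.

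Finally, to identify $\V_{\lambda,\mu}$ with $\Lmg$ for $\m=\DD(x-\lambda)$ and $g=y-\mu$, I would invoke Theorem \ref{T:induce}\,(ii): the polynomial $\bar g=y-\mu$ is monic of degree one, hence prime, in $\PR_{\m}=(\DD/\m)[y]\cong\FF[y]$, so $\Lmg$ has dimension $\deg(f)\deg(\bar g)=1$, and its cyclic generator transforms under $x$ and $y$ exactly as $v_{\lambda,\mu}$ does. I do not anticipate any serious obstacle; the argument is a short chain of citations together with a standard linear-algebra step, and the only point requiring care is the correct invocation of the characteristic-zero half of Lemma \ref{lem:ann} (to rule out the exceptional $\ann_{\DD}(\M)=\m^{p}$ case) and of Lemma \ref{lem:delta-inv} (to force $f\mid h$).
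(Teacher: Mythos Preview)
Your argument is correct and is essentially the same as the paper's: both use Lemma~\ref{lem:ann} (together with Lemma~\ref{lem:delta-inv}) to reduce to $\M=\M_{\m}$ with $f=x-\lambda$ a linear factor of $h$, observe that $h$ then annihilates $\M$ so $x$ and $y$ commute on $\M$, and conclude via a common eigenvector. The only cosmetic difference is that the paper phrases the commutativity via the inclusion $[\A_h,\A_h]\subseteq h\A_h$ (equation~\eqref{eq:xyw}) rather than the single relation $[y,x]=h$, but this is the same idea.
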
   
 
\begin{remark}  For the algebra $\A_x$, which is the universal enveloping algebra of the $2$-dimensional
solvable, non-abelian Lie algebra,  Theorem  \ref{thm:fd} is Lie's theorem.  For the algebra
$\A_{x^2}$, this result appears in \cite{I12}.   In both these cases (and more generally when  $h = x^n$ for any $n \geq 1$)  $\lambda = 0$ in Theorem \ref{thm:fd}.     \end{remark} 
 
\begin{cor}\label{cor:one-dim}  Assume $\FF$ is an algebraically closed field of characteristic 0,  and let $\V  = \V^\m$ be
an irreducible generalized weight module for $\A_h$ with $\m = \DD f$.   Either  
\begin{itemize}  
\item[{\rm (i)}] $f  = x - \lambda$, where $\lambda$ is a root of $h$,  and $\V = \V_{\lambda, \mu}$ for some $\mu \in \FF$, where $\V_{\lambda, \mu}$ is the one-dimensional $\A_h$-module 
determined by $\lambda,\mu$  in Theorem \ref{thm:fd};   or  
\item[{\rm (ii)}]  $f$ is not a factor of $h$ and $\V$ is isomorphic to the induced module $\UU(\m) =
\A_h \ot_{\DD} \DD/\m$.   
\end{itemize}  \end{cor}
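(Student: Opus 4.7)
The plan is to derive this corollary by combining Corollary~\ref{cor:genwt0} with Theorem~\ref{thm:fd}, using that algebraic closure of $\FF$ forces prime polynomials in $\DD$ and in $\PR_\m$ to be linear.

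First, since $\V = \V^\m$ is irreducible generalized weight with $\m = \DD f$ maximal and $f$ prime, Corollary~\ref{cor:genwt0} splits the analysis into two cases according to whether $f$ is a factor of $h$. If $f$ is not a factor of $h$, then Corollary~\ref{cor:genwt0}(ii) immediately yields $\V \cong \UU(\m) = \A_h \otimes_\DD \DD/\m$, which is exactly conclusion (ii) of the corollary to be proved.

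Now suppose $f$ is a factor of $h$. Because $\FF$ is algebraically closed, every prime polynomial in $\DD = \FF[x]$ is linear, so $f = x - \lambda$ for some $\lambda \in \FF$ which, being a root of $f$, is also a root of $h$. By Corollary~\ref{cor:genwt0}(i), $\V \cong \Lmg$ for some monic prime polynomial $\bar g \in \PR_\m = (\DD/\m)[y]$. The residue field $\DD/\m \cong \FF$ is algebraically closed, so $\PR_\m \cong \FF[y]$, and the monic primes in $\PR_\m$ are the linear polynomials $y - \mu$ with $\mu \in \FF$. Thus $\bar g = y - \mu$ for some $\mu$, and Theorem~\ref{T:induce}(ii) gives $\dim_\FF \Lmg = \deg(f)\deg(\bar g) = 1$.

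Finally, I would identify $\Lmg$ explicitly with $\V_{\lambda,\mu}$ of Theorem~\ref{thm:fd}. The image $v$ of the generator $u_\m$ in the quotient $\Lmg = \UU(\m)/(y-\mu)\UU(\m)$ is nonzero and spans $\Lmg$; the relation $(x-\lambda)u_\m = 0$ gives $x \cdot v = \lambda v$, while $(y - \mu)u_\m \in (y-\mu)\UU(\m)$ gives $y \cdot v = \mu v$. This matches the module structure of $\V_{\lambda,\mu}$, completing case (i). There is essentially no obstacle here: the only point requiring a moment of thought is the reduction from arbitrary monic prime $\bar g \in \PR_\m$ to a linear polynomial, which is immediate from algebraic closure of $\DD/\m$.
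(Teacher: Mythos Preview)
Your proof is correct and follows the approach the paper intends: the corollary is stated without proof as an immediate consequence of Corollary~\ref{cor:genwt0} and Theorem~\ref{thm:fd}, and your argument makes this explicit by invoking the dichotomy of Corollary~\ref{cor:genwt0} and then, in case~(i), using algebraic closure to force both $f$ and $\bar g$ to be linear. The explicit identification of $\Lmg$ with $\V_{\lambda,\mu}$ at the end is a nice touch, though one could equally well just cite Theorem~\ref{thm:fd} once $\V$ is known to be finite dimensional.
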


\begin{subsubsection}{$\FF$ an arbitrary field of characteristic 0}  \end{subsubsection}

 Assume $\FF$ is an arbitrary field of characteristic 0,  and $\M$ is as above,  a finite-dimensional 
 irreducible $\A_h$-module. We may suppose  that $\M = \M_\m$, where $\m$ is a maximal ideal generated by a prime factor $f$  of $h$ of degree $d$.  By  Corollary \ref{cor:genwt0}, we know that $\M \cong  \Lmg = \UU(\m)/\bar g \UU(\m)$ for some monic prime
 polynomial $\bar g = y^n - \sum_{j=0}^{n-1} y^j \bar g_j \in \PR_\m = (\DD/\m)[y]$.   Taking $v$ any  nonzero element of $\M$,  we have that  $\{y^k x^\ell v  \mid 0 \leq k < n,  0 \leq \ell < d\}$
is a basis for  $\M$. 

Assuming  $f = x^d - \sum_{i=0}^{d-1} \zeta_i x^i$ and $g = y^n - \sum_{j=0}^{n-1} y^jg_j$,   where $\zeta_i \in \FF$ for all $i$  and the polynomial $g_j \in \DD$ is of degree less than $d$ for all $j$,  we have   

\begin{eqnarray*} x. y^k x^\ell v &=&  \begin{cases}   y^k x^{\ell+1} v   & \qquad \hbox{if} \ \  0 \leq \ell < d-1, \\
\displaystyle{\sum_{i=0}^{d-1} \zeta_i\, y^k x^i  v}    & \qquad \hbox{if} \ \  \ell =  d-1,   \end{cases} \\
  y. y^k x^\ell v &=&  \begin{cases}   y^{k+1}x^j v   & \qquad \hbox{if} \ \  0 \leq k < n-1, \\
\displaystyle{\sum_{j=0}^{n-1} y^{j} g_j x^\ell}  v = 
\displaystyle{\sum_{j=0}^{n-1} y^{j} s_{j,\ell}}  v & \qquad \hbox{if} \ \  k =  n-1, \end{cases}
\end{eqnarray*} 
where $s_{j,\ell}$ is the remainder when $g_j x^\ell$ is divided by $f$.  
   \bigskip 
 
\begin{exam}   Assume  $h = (x-\lambda)^\ell$ for some $\lambda \in \FF$ and some $\ell \geq 1$;  
 $f = x-\lambda$;  and $\m = \DD f$.  Let $g = y^n - \sum_{j=0}^{n-1}y^jg_j \in \A_h$ be such that  $g_j \in \DD$ for all $j$ and $\bar g =  y^n - \sum_{j=0}^{n-1}y^j\bar g_j$ is prime in    $(\DD/\m)[y]$, i.e.  $y^n - \sum_{j=0}^{n-1}g_j(\lambda) y^j$ is a prime polynomial in $\FF[y]$.   Then the irreducible module $\Lmg = \UU(\m)/\bar g \UU(\m)$ has a  basis $\{y^kv \mid 0\leq k <  n\}$,  where $v := u_\m + \bar g \UU(\m)$, and  the  $\A_h$-action is given by   $$x.y^kv = \lambda y^kv, \quad   y.y^kv = y^{k+1}v \ \ (0 \leq k < n-1), \quad y.y^{n-1}v = \sum_{j=0}^{n-1} g_j(\lambda) y^j v.$$   \end{exam}

\begin{subsection}{Irreducible $\DD$-torsion-free $\A_h$-modules when $\chara(\FF) = 0$}  \end{subsection}

In order to discuss  the $\DD$-torsion-free irreducible $\A_{h}$-modules when $\chara(\FF) =0$,  we assume $\Sf = \DD \setminus \{0\}$ and   $\EE = \Sf^{-1} \DD$ is the field of fractions of $\DD = \FF[x]$ as in Section 2.  The localization 
$\BB = \Sf^{-1}\A_{h}$ is the Ore extension $\BB = \EE[y,\mathsf{id}_{\EE},\delta]$,  where $\delta(e)=e'h$ for all $e \in \EE$. (Note that $\BB$ does not depend on $h$, up to isomorphism.) First we briefly review Block's correspondence between $\widehat \A_{h}(\DD\hbox{-}\mathsf{torsion\hbox{-}free})$ and $\widehat \BB(\A_{h}\hbox{-}\mathsf{socle})$, where the latter denotes the set of isomorphism classes of irreducible $\BB$-modules $\V$ such that $\mathsf{Soc}_{\A_{h}}(\V)\neq 0$. Recall that the \textit{socle} of an $\A_h$-module $\V$ is the submodule $\mathsf{Soc}_{\A_{h}}(\V)$  generated by the irreducible $\A_h$-submodules of $\V$. Block's  correspondence   
\cite[Lem.\ 2.2.1]{Bl81} gives the following  (see also \cite[Sec.\ 5]{Bv99} for the same correspondence in a more general setting).  \medskip

\begin{prop}\label{P:tfcorres} 
Let $\M$ be an irreducible $\DD$-torsion-free $\A_{h}$-module. Then $\Sf^{-1}\M = \BB \otimes_{\A_{h}} \M$ is an irreducible $\BB$-module, and the map 
\begin{equation}\label{E:irrtorfreecorr}
\widehat \A_{h}(\DD\hbox{-}\mathsf{torsion\hbox{-}free}) \longrightarrow\widehat \BB(\A_{h}\hbox{-}\mathsf{socle}), \qquad [\M]\mapsto [\Sf^{-1}\M]
\end{equation} is a bijection.
\end{prop}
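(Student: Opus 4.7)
The plan is to establish Block's correspondence in three stages: well-definedness (that the assignment lands in $\widehat\BB(\A_h\hbox{-}\mathsf{socle})$), surjectivity, and injectivity. The running tool is that for any $\DD$-torsion-free $\A_h$-module $\M$, the canonical map $\iota\colon \M \to \Sf^{-1}\M$, $v \mapsto 1 \otimes v$, has kernel $\tor_{\DD}(\M) = 0$, so $\M$ embeds as an $\A_h$-submodule of $\Sf^{-1}\M$, and every element of $\Sf^{-1}\M$ has the form $s^{-1}\iota(v)$ for some $s \in \Sf$ and $v \in \M$.

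First, I would verify that for irreducible $\DD$-torsion-free $\M$, the module $\Sf^{-1}\M$ is irreducible over $\BB$ and has nonzero $\A_h$-socle. The socle contains $\iota(\M)$, hence is nonzero. For $\BB$-irreducibility, given any nonzero $\BB$-submodule $\N$ and $0 \neq w = s^{-1}\iota(v)$ in $\N$, the element $sw = \iota(v) \in \N \cap \iota(\M)$ is nonzero by $\DD$-torsion-freeness of $\M$; irreducibility of $\M$ then gives $\N \supseteq \iota(\M)$, and clearing denominators yields $\N = \Sf^{-1}\M$. Well-definedness of \eqref{E:irrtorfreecorr} on isomorphism classes then follows from the functoriality of $\Sf^{-1}(-)$.

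For surjectivity, given $\V \in \widehat\BB(\A_h\hbox{-}\mathsf{socle})$, I would choose any irreducible $\A_h$-submodule $\M$ of $\V$. Since every $s \in \Sf$ acts invertibly on the $\BB$-module $\V$, its action on $\M$ is injective, so $\M$ is $\DD$-torsion-free. Then $\Sf^{-1}\M$ is a nonzero $\BB$-submodule of $\V$, and $\BB$-irreducibility of $\V$ forces $\Sf^{-1}\M \cong \V$ as $\BB$-modules.

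The main obstacle is injectivity, which hinges on the claim that $\Sf^{-1}\M$ contains a \emph{unique} irreducible $\A_h$-submodule, namely $\iota(\M)$. If $\N_1, \N_2 \subseteq \Sf^{-1}\M$ are two such submodules, then each $\Sf^{-1}\N_i$ is a nonzero $\BB$-submodule of $\Sf^{-1}\M$ and so equals $\Sf^{-1}\M$. For any $0 \neq v \in \N_1$, writing $v = s^{-1}w$ with $s \in \Sf$ and $w \in \N_2$ gives $sv = w \in \N_1 \cap \N_2$, with $sv \neq 0$ because $s$ acts invertibly on $\Sf^{-1}\M$; irreducibility of $\N_1$ and $\N_2$ then forces $\N_1 = \N_2 = \iota(\M)$. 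Given a $\BB$-isomorphism $\phi\colon \Sf^{-1}\M \to \Sf^{-1}\M'$, the image $\phi(\iota(\M))$ is an irreducible $\A_h$-submodule of $\Sf^{-1}\M'$, so $\phi(\iota(\M)) = \iota(\M')$ by uniqueness, and $\phi$ restricts to an $\A_h$-isomorphism $\M \cong \M'$. The formal steps above are routine; the delicate point is this uniqueness-of-socle argument, which depends essentially on $\Sf^{-1}\M$ being $\DD$-torsion-free by virtue of its $\BB$-module structure.
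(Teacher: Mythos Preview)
Your proof is correct and follows essentially the same approach as the paper's. The paper phrases the key step using the language of \emph{essential} submodules---observing that $\iota(\M)$ is essential in $\Sf^{-1}\M$, hence $\mathsf{Soc}_{\A_h}(\Sf^{-1}\M)=\iota(\M)$, and then constructing the inverse map as $[\V]\mapsto[\mathsf{Soc}_{\A_h}(\V)]$---whereas you unwind the same essentiality argument explicitly to prove uniqueness of the irreducible $\A_h$-submodule; the content is identical.
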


\begin{proof} Let $\M$ be an irreducible $\DD$-torsion-free $\A_{h}$-module. Then $\Sf^{-1}\M = \BB \otimes_{\A_{h}} \M$ is an irreducible $\BB$-module. Thus,  there is a map
$\Psi: \widehat \A_{h}(\DD\hbox{-}\mathsf{torsion\hbox{-}free}) \longrightarrow\widehat \BB$
given by $[\M]\mapsto [\Sf^{-1}\M].$  Since $\M$ embeds in $\Sf^{-1}\M$ as an $\A_{h}$-module, we have $\M\subseteq \mathsf{Soc}_{\A_{h}}(\Sf^{-1}\M)$.  

Recall that a submodule of a module $\V$ is said to be  \textit{essential} if its intersection with any nonzero submodule of $\V$ is nonzero. It is easy to see that $\M$ is an essential $\A_{h}$-submodule of $\Sf^{-1}\M$, thus $\mathsf{Soc}_{\A_{h}}(\Sf^{-1}\M)=\M$.   This shows that the map $\Psi$ is injective,  and its image is contained in $\widehat \BB(\A_{h}\hbox{-}\mathsf{socle})$. Conversely, if $\V$ is an irreducible $\BB$-module such that $\mathsf{Soc}_{\A_{h}}(\V)\neq 0$, then we claim that $\mathsf{Soc}_{\A_{h}}(\V)$ is an irreducible $\A_{h}$-module and $\Sf^{-1}\mathsf{Soc}_{\A_{h}}(\V)=\V$.  Indeed,  if $\mathsf{L}\subseteq \mathsf{Soc}_{\A_{h}}(\V)$ is an irreducible $\A_{h}$-submodule,  then $\Sf^{-1}\mathsf{L}=\V$ by the irreducibility of $\V$ and the fact that $\mathsf{L}\subseteq\V$ is $\DD$-torsion-free.  Thus,  $\mathsf{L}$ is an essential $\A_h$-submodule of $\V$ which implies that $\mathsf{Soc}_{\A_{h}}(\V)=\mathsf{L}$ is irreducible. Hence, $\Psi$ gives  a bijection onto $\widehat \BB(\A_{h}\hbox{-}\mathsf{socle})$, with inverse
\begin{equation}
\widehat \BB(\A_{h}\hbox{-}\mathsf{socle}) \longrightarrow \widehat \A_{h}(\DD\hbox{-}\mathsf{torsion\hbox{-}free}), 
\quad [\V]\mapsto [\mathsf{Soc}_{\A_{h}}(\V)].
\end{equation} \end{proof}

Since $\BB$ is an Ore extension over the field $\Sf^{-1} \DD$, $\BB$ is a principal left ideal domain so that the irreducible $\BB$-modules are the $\BB$-modules of the form $\BB/\BB b$, where $b\in\BB$ is an irreducible element. In particular, any $\DD$-torsion-free irreducible $\A_{h}$-module has the form $\A_{h}/(\A_{h}\cap\BB b)$, for $b\in\BB$ irreducible, but not all such $\A_{h}$-modules are irreducible (compare \cite[Thm.\ 4.3]{Bl81}). In \cite[Cor.\ 2.2, Cor.\ 4.4.1]{Bl81},  Block showed that for the Weyl algebra $\A_{1}$,  the  map  $\Psi: \widehat \A_{1}(\DD\hbox{-}\mathsf{torsion\hbox{-}free}) \longrightarrow \widehat \BB$ is in fact surjective (i.e., $\widehat \BB=\widehat \BB(\A_{1}\hbox{-}\mathsf{socle})$), so the irreducible $\DD$-torsion-free $\A_{1}$-modules  correspond to $\BB$-modules of the form $\BB/\BB b$ and  are classified by the similarity classes of irreducible elements of $\BB$. This does not hold for $\A_{h}$ if $h\notin\FF$, by~\cite[Cor.\ 4.4.1]{Bl81}.  We illustrate this phenomenon with a specific example. 

\begin{exam}   Suppose $\chara(\FF)=0$.
Let  $\BB=\Sf^{-1}\A_{h}$, and consider the $\BB$-module $\BB/\BB y$. Then as an $\Sf^{-1}\DD$-module, $\BB/\BB y\cong \FF(x)$, the field of fractions of $\DD$, with $y.\frac{q}{r}=h\left(\frac{q}{r}\right)'=h\frac{q'r-qr'}{r^{2}}$ for all $q,r \in \DD$, $r \neq 0$. It is clear that $\BB/\BB y$ is an irreducible $\BB$-module, as $h^{-1}y$ acts as $\frac{d}{dx}$. Now consider the $\A_{h}$-submodule  $\A_{h}/(\A_{h}\cap\BB y)=\A_{h}/\A_{h}y$. As an $\DD$-module, $\A_{h}/\A_{h}y\cong \FF[x] = \DD$, with $y$ acting as $h\frac{d}{dx}$. For any $k\geq 0$, $h^{k}\DD$ is an $\A_{h}$-submodule of $\A_{h}/\A_{h}y$ and $\left\{ h^{k}\DD \right\}_{k\geq 0}$ 
is a strictly descending chain of submodules of $\A_{h}/\A_{h}y$ if $h\notin\FF$. In particular, $\A_{h}/\A_{h}y$ is  irreducible if and only if $h\in\FF^{*}$.   

Similarly, suppose $\mathsf{Soc}_{\A_{h}}(\BB/\BB y)\neq 0$, and assume  $\A_{h}.\frac{q}{r}\subseteq \mathsf{Soc}_{\A_{h}}(\BB/\BB y)$ is an irreducible $\A_{h}$-submodule of $\BB/\BB y \cong \FF(x)$. As $0\neq q=r\frac{q}{r}\in \A_{h}.\frac{q}{r}$,  which is an irreducible submodule, we have $\A_{h}.q=\A_{h}.\frac{q}{r}$, so we can assume $r=1$; in particular, $\A_{h}.q\subseteq \DD$. The irreducibility argument also shows that $\A_{h}.(hq)=\A_{h}.q$, so $q\in\A_{h}.(hq)$. Assume further that $h\notin\FF$ and take $k\geq 0$ maximal such that $h^{k}$ divides $q$. Then every nonzero element in $\A_{h}.(hq)$ is divisible by $h^{k+1}$, which contradicts the maximality of $k$. Thus, $\mathsf{Soc}_{\A_{h}}(\BB/\BB y)= 0$ if $h\notin\FF$. If $h\in\FF^{*}$,  then clearly $\mathsf{Soc}_{\A_{h}}(\BB/\BB y)=\A_{h}.1=\DD$.
\end{exam} 
 
 Next we will characterize the isomorphism classes of irreducible $\DD$-torsion-free $\A_{h}$-modules in terms of the irreducible $\DD$-torsion-free $\A_{1}$-modules, without involving localization. For this, we will view $\A_{h}$ as a subalgebra of the Weyl algebra $\A_{1}$ via the embedding $\A_{h}\hookrightarrow\A_{1}$, $x\mapsto x$, $\hat y\mapsto yh$, where $x, \hat y$ are the generators of $\A_{h}$ with $[\hat y, x]=h$ and $x, y$ are the generators of the Weyl algebra, satisfying $[y, x]=1$.

Let $\M$ be an irreducible $\DD$-torsion-free $\A_{h}$-module. Since $h$ is normal in $\A_{h}$, (that
is $h\A_h = \A_h h$,  as shown in  \cite[Lem.~7.1]{BLO1}),   $h\M$ is a submodule. 
But then  $h\M=\M$,  as $\M$ is $\DD$-torsion-free.   Given $m\in\M$,  there exists an $\widetilde m \in\M$ with $m=h\widetilde m$, and $\widetilde m$ is unique  since $\M$ is $\DD$-torsion-free.  Define 
$$
y.m :=\hat y.\widetilde m.
$$
It is apparent that this extends the action of $\A_{h}$ on $\M$ to an action of $\A_{1}$ on $\M$, so that $\M$ is an irreducible $\DD$-torsion-free $\A_{1}$-module.  Thus,  we have an injective map
\begin{equation}\label{E:ah2a1}
\widehat \A_{h}(\DD\hbox{-}\mathsf{torsion\hbox{-}free}) \longrightarrow \widehat \A_{1}(\DD\hbox{-}\mathsf{torsion\hbox{-}free}), 
\quad [\M]\mapsto [\M].
\end{equation}
The next result describes the image of this map.

\begin{prop}\label{P:imageah2a1}
Suppose $\M$ is an irreducible $\DD$-torsion-free $\A_{1}$-module. The following conditions are equivalent:
\begin{itemize} 
\item[{\rm (i)}]   The restriction of $\M$ to $\A_{h}$ is an irreducible $\A_{h}$-module.
\item[{\rm (ii)}] $\mathsf{Soc}_{\A_{h}}(\M) \neq 0$.
\item[{\rm (iii)}]  $h\M=\M$ and $\M$ is a Noetherian $\A_{h}$-module.
\end{itemize}
\end{prop}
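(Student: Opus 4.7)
The plan is to establish (i) $\Rightarrow$ (ii), (i) $\Rightarrow$ (iii), (ii) $\Rightarrow$ (i), and (iii) $\Rightarrow$ (i). The two implications out of (i) are formal: an $\A_{h}$-irreducible module $\M$ satisfies $\mathsf{Soc}_{\A_{h}}(\M)=\M\ne 0$, is trivially Noetherian over $\A_h$, and has $h\M=\M$ because $h\M$ is a nonzero $\A_{h}$-submodule of $\M$ (nonzero by $\DD$-torsion-freeness and $h\ne 0$, and an $\A_h$-submodule because $h$ is normal in $\A_{h}$ by \cite[Lem.~7.1]{BLO1}).

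The key technical observation underlies both remaining implications. Whenever $h\M=\M$ and $\M$ is $\DD$-torsion-free, the operator $h\colon\M\to\M$ is bijective, so $h^{-1}$ is well-defined as an $\FF$-linear endomorphism of $\M$. The relation $\hat y h - h\hat y=\delta(h)=h'h$, rewritten in $\BB=\Sf^{-1}\A_{1}=\Sf^{-1}\A_{h}$, gives $\hat y h^{-1}=h^{-1}\hat y - h'h^{-1}$, and combining with $\hat y = yh$ from the embedding $\A_{h}\hookrightarrow\A_{1}$ yields the crucial identity $y=\hat y h^{-1}$ as operators on $\M$. From this I will extract two facts: (a) for any $\A_{h}$-submodule $\mathsf{N}\subseteq\M$, the set $h^{-1}\mathsf{N}$ is again an $\A_{h}$-submodule, the only nonobvious point being $\hat y$-stability, which follows from $\hat y (h^{-1}n)=h^{-1}(\hat y n) - h'(h^{-1}n)$; and (b) any $\A_{h}$-submodule $\mathsf{N}\subseteq\M$ with $h^{-1}\mathsf{N}\subseteq\mathsf{N}$ is automatically an $\A_{1}$-submodule, since $y\mathsf{N}=\hat y h^{-1}\mathsf{N}\subseteq\hat y\mathsf{N}\subseteq\mathsf{N}$.

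For (iii) $\Rightarrow$ (i), I will take a nonzero $\A_{h}$-submodule $\mathsf{N}$ of $\M$, form the ascending chain $\mathsf{N}\subseteq h^{-1}\mathsf{N}\subseteq h^{-2}\mathsf{N}\subseteq\cdots$ of $\A_{h}$-submodules (well-defined by (a)), invoke the Noetherian hypothesis to conclude $h^{-1}\mathsf{N}=\mathsf{N}$, and then use (b) to deduce that $\mathsf{N}$ is an $\A_{1}$-submodule of the irreducible $\A_{1}$-module $\M$, forcing $\mathsf{N}=\M$. For (ii) $\Rightarrow$ (i), I will choose an irreducible $\A_{h}$-submodule $\mathsf{L}\subseteq\mathsf{Soc}_{\A_{h}}(\M)$; since $\mathsf{L}$ inherits $\DD$-torsion-freeness from $\M$, the submodule $h\mathsf{L}$ of $\mathsf{L}$ is nonzero, so irreducibility of $\mathsf{L}$ gives $h\mathsf{L}=\mathsf{L}$, equivalently $h^{-1}\mathsf{L}=\mathsf{L}$ inside $\M$. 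Then (b) shows $\mathsf{L}$ is an $\A_{1}$-submodule of $\M$, forcing $\mathsf{L}=\M$.

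The only substantive obstacle is the commutation identity $\hat y h^{-1}=h^{-1}\hat y - h' h^{-1}$ in $\BB$ together with its consequence that $h^{-1}$ transports $\A_{h}$-submodule structure to $\A_{h}$-submodule structure; once those are in place, both hard implications reduce to a one-line ascending-chain or essential-submodule argument against the $\A_{1}$-irreducibility of $\M$.
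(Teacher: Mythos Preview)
Your proof is correct, and for (i)$\Rightarrow$(ii), (i)$\Rightarrow$(iii), and (ii)$\Rightarrow$(i) it matches the paper's argument essentially line for line (the paper phrases (ii)$\Rightarrow$(i) by invoking the paragraph preceding the proposition, which is exactly your observation that $h\mathsf{L}=\mathsf{L}$ makes $\mathsf{L}$ an $\A_1$-submodule). One small point of care: in (ii)$\Rightarrow$(i) you have not assumed $h\M=\M$, so the operator $h^{-1}$ is not yet available on all of $\M$; however, your argument only needs it on $\mathsf{L}$, where $h\mathsf{L}=\mathsf{L}$ guarantees bijectivity, so the reasoning goes through.

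For (iii)$\Rightarrow$(i) the two arguments diverge slightly. The paper works from the top down: it picks a \emph{maximal} $\A_h$-submodule $\mathsf{N}$ (existence by Noetherianity), uses maximality together with $h\M=\M$ to show $\{m\in\M\mid hm\in\mathsf{N}\}=\mathsf{N}$, hence $h\mathsf{N}=\mathsf{N}$, so $\mathsf{N}$ is an $\A_1$-submodule and therefore $0$. You work from the bottom up: starting from an arbitrary nonzero $\A_h$-submodule $\mathsf{N}$, you run the ascending chain $\mathsf{N}\subseteq h^{-1}\mathsf{N}\subseteq h^{-2}\mathsf{N}\subseteq\cdots$ and use Noetherianity to force stabilization. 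Note that stabilization a priori occurs only at some stage $k$, i.e.\ $h^{-k}\mathsf{N}=h^{-(k+1)}\mathsf{N}$; to get your stated conclusion $h^{-1}\mathsf{N}=\mathsf{N}$ you should apply the bijection $h$ to both sides $k$ times. Both routes are equally short; yours has the mild advantage of attacking an arbitrary nonzero submodule directly rather than passing through existence of maximal submodules.
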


\begin{proof}
The implication (i)$\implies$(ii) is obvious,  and  (i)$\implies$(iii) follows from the preceding considerations. Suppose $\mathsf{Soc}_{\A_{h}}(\M) \neq 0$, and let $\mathsf{L}$ be an irreducible $\A_{h}$-submodule of $\M$. Then as before, $h\mathsf{L}=\mathsf{L}$,  and $\mathsf{L}$ is an $\A_{1}$-submodule of $\M$. Thus $\mathsf{L}=\M$ which shows that $\M$ is an irreducible $\A_{h}$-module,  so that  (ii)$\implies$(i)  holds.

Finally, assume that $h\M=\M$ and $\M$ is a Noetherian $\A_{h}$-module. Let $\N$ be a maximal $\A_{h}$-submodule of $\M$. Thus, since $h$ is normal, $\{ m\in\M\mid hm\in\N \}$ is an $\A_{h}$-submodule of $\M$ containing $\N$. As $\N$ is maximal and $h\M=\M\not\subseteq\N$, it follows that $\{ m\in\M\mid hm\in\N\}=\N$. Given $v \in\N\subseteq \M=h\M$, there exists $m\in\M$ so that $v=hm$; hence $m\in\N$ and $h\N=\N$. Now we can conclude that $\N$ is a proper $\A_{1}$-submodule of $\M$. Therefore, $\N=0$, proving that $\M$ is an irreducible $\A_{h}$-module. This shows 
that  (iii)$\implies$(i).   \end{proof}

\begin{section}{Irreducible $\A_h$-modules when $\chara(\FF) = p > 0$}\end{section}

%{\color{red} We assume in this section that $\chara(\FF) = p>0$.  In this case, all irreducible $\A_{h}$-modules are finite dimensional by Proposition \ref{P:findimp} and therefore have $\DD$-torsion.  }

In this section, we investigate the irreducible $\A_h$-modules when $\chara(\FF) = p > 0$ and completely determine them when $\FF$ is algebraically closed.  When $\chara(\FF) = p>0$, all irreducible $\A_{h}$-modules are finite dimensional by Proposition \ref{P:findimp} and therefore have $\DD$-torsion.  We have seen in Theorem \ref{T:center}  that  the center of $\A_h$ is the  polynomial algebra  $\centh = \FF[x^p, z_p]$, where $z_p =   y  (y  + h') \cdots (y  + (p-1)h') = y^p-y\frac{\delta^p(x)}{h(x)}$, and $\frac{\delta^p(x)}{h(x)} \in  \FF[x^p]$.  Quillen's extension of Schur's Lemma tells us that $\centh$  must act as scalars on any irreducible $\A_h$-module $\V$ when $\FF$ is  algebraically closed.

\medskip
Since our ultimate goal is a description of the irreducibles when $\FF$ is
algebraically closed,  we make the following assumptions throughout the section: \medskip

\begin{asspts} \label{A:charphypos} {\it 
 $\V$ is an irreducible $\A_h$-module, and   there exist scalars $\beta \in \FF$ and  $\lambda, \alpha$ in the algebraic closure $\overline \FF$ of $\FF$ such that  $\lambda^p, \alpha^{p-1} \in \FF$,  and as transformations on $\V$,
 \begin{itemize}
 \item  $x^p = \lambda^p\, \mathsf{id}_\V$  (equivalently,   $(x-\lambda \, \mathsf{id}_\V)^p = 0$), 
 \item  $\frac{\delta^p(x)}{h(x)} = \alpha^{p-1}\, \mathsf{id}_\V$, 
 \item $y^p - \alpha^{p-1}y = \beta\,\mathsf{id}_\V$. 
 \end{itemize} } \end{asspts}

 Suppose $\mu \in \overline \FF$ is a root of the polynomial  $g(t):= t^p - \alpha^{p-1}t - \beta$.   Then 
 
 \begin{equation}\label{eq:roots}  \Theta = \{ \mu + j \alpha  \mid  j = 0,1, \dots, p-1\}  \end{equation} 
is the complete set of roots of $g(t)$  in $\overline \FF$.
Now  if $g(t)$  has a monic factor in $\FF[t]$, say of degree $m$
where $1 \leq m  < p$, then the coefficient of $t^{m-1}$ in that factor has the form $-(m  \mu + n\alpha)$ for some $n$.     This implies $\mu + m^{-1}n \alpha \in \FF$, hence $g(t)$
has a root in $\FF$.     From this we see that either  $t^p - \alpha^{p-1}t - \beta$ has
a root in $\FF$ or is a prime polynomial in $\FF[t]$.

\begin{lemma} \label{lem:lambda=alpha}
Suppose $\V$ is an $\A_h$-module and $\lambda \in \FF$  is such that $h(\lambda) = 0$ and $x = \lambda \, \mathsf{id}_\V$ as a transformation on $\V$.  Then  $\frac{\delta^p(x)}{h(x)} = h'(\lambda)^{p-1} \, \mathsf{id}_\V$.
 \end{lemma}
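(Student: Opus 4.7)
The plan is to reduce the statement to a direct polynomial identity evaluated at $\lambda$. Recall from Theorem~\ref{T:center} that $\frac{\delta^p(x)}{h(x)}$ is actually an element of $\FF[x^p] \subseteq \DD$, so it is a polynomial in $x$. Since $x$ acts as $\lambda\,\mathsf{id}_\V$, this polynomial acts on $\V$ as the scalar obtained by evaluating it at $x = \lambda$. Thus the entire statement reduces to the identity
\[
\left.\frac{\delta^p(x)}{h(x)}\right|_{x = \lambda} \;=\; h'(\lambda)^{p-1}
\]
in $\FF$, and $\chara(\FF) = p$ plays no role beyond the one-line appearance of the exponent $p$.

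The key step is to organize the iterated derivatives. For each $k \geq 1$ the element $\delta^k(x) \in \DD$ is divisible by $h$, so we may write $\delta^k(x) = h\,P_k$ with $P_k \in \FF[x]$, and $P_1 = 1$. Applying $\delta$ and using $\delta(r) = r'h$, I get the recursion
\[
\delta^{k+1}(x) \;=\; \delta(h P_k) \;=\; (h P_k)'\,h \;=\; \bigl(h' P_k + h P_k'\bigr)h,
\]
so $P_{k+1} = h' P_k + h P_k'$.

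Evaluating at $x = \lambda$ and using $h(\lambda) = 0$ collapses the recursion to
\[
P_{k+1}(\lambda) \;=\; h'(\lambda)\,P_k(\lambda),
\]
and a trivial induction on $k$ starting from $P_1(\lambda) = 1$ yields $P_k(\lambda) = h'(\lambda)^{k-1}$. Taking $k = p$ gives $\frac{\delta^p(x)}{h(x)}\big|_{x=\lambda} = P_p(\lambda) = h'(\lambda)^{p-1}$, and substituting $x = \lambda\,\mathsf{id}_\V$ completes the proof.

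There is essentially no obstacle here; the only point worth noting is that the argument works verbatim over any field (independent of $\chara(\FF)$), and that divisibility of $\delta^k(x)$ by $h$ is immediate from $\delta(x) = h$ together with the Leibniz rule for $\delta = h\,\frac{d}{dx}$, so no appeal to Theorem~\ref{T:center} beyond the fact that $\frac{\delta^p(x)}{h(x)}$ is a genuine polynomial is required.
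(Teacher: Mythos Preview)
Your proof is correct and takes essentially the same approach as the paper: both show that $\delta^k(x)/h$ evaluated at $\lambda$ equals $h'(\lambda)^{k-1}$ via the recursive structure of $\delta^k(x)$. The paper phrases this as $\delta^k(x) = (h')^{k-1}h + f_k h^2$ (working modulo $h^2$), while you set up the recursion $P_{k+1} = h'P_k + hP_k'$ and evaluate at $\lambda$; these are minor presentational variants of the same argument.
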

 \begin{proof}
Note that $\delta^1(x) = h$ and $\delta^2(x) =   h' h$.  It is evident by induction that  for all $k \geq 1$, \, $\delta^k(x) = (h')^{k-1}h + f_k h^2$ for some $f_k \in \DD$ (compare Lemma \ref{L:dkf} and Corollary \ref{C:dkx} below).  Therefore $\frac{\delta^p(x)}{h(x)} = h'(x)^{p-1} + f_p(x)h(x)$ and $\frac{\delta^p(x)}{h(x)} \Big |_{x = \lambda} = h'(\lambda)^{p-1} + 0$.
\end{proof}

\begin{thm}\label{T:Vdot}
Suppose $\chara(\FF) = p > 0$,  and  let $\V$ be an irreducible $\A_h$-module satisfying  the assumptions in 
\ref{A:charphypos}.  Suppose further  that $\lambda \in \FF$.     Then  one of the following holds:
\begin{itemize}
\item[{\rm (i)}]   $h(\lambda)= 0$ and  there exists $\theta \in \Theta \cap \FF$ so that  $\V = \FF v$   where $x.v = \lambda v, \  y.v = \theta v$. 
\item[{\rm (ii)}]   $h(\lambda) = 0$,  $\Theta \cap \FF = \emptyset$, and $\V$  has a basis
$\{v_n \mid n = 0,1, \dots, p-1\}$ such that $x.v_n = \lambda v_n$ for all $n$, \  $y.v_n = v_{n+1}$ for $n < p-1$ \
and \ $y.v_{p-1} = h'(\lambda)^{p-1}v_1 + \beta v_0$.  
\item[{\rm (iii)}]   $h(\lambda) \neq 0$ and $\V$  has a basis
$\{v_n \mid n = 0,1, \dots, p-1\}$ such that 
\begin{eqnarray*}  y.v_n &=&  \begin{cases}  v_{n+1} & \qquad \hbox{\rm if} \ \ \   0 \leq n < p-1, \\
\alpha^{p-1}v_1 + \beta v_0  & \qquad  \hbox{\rm if}  \ \ \ n = p-1;   \end{cases}  \\
x.v_n &=&  \sum_{j=0}^n  (-1)^j {n \choose j}  \delta^j(x) \mid_{x = \lambda} v_{n-j}. 
\end{eqnarray*}  
\end{itemize} 
\end{thm}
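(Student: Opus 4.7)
The plan is to split the proof according to whether $h(\lambda) = 0$ or $h(\lambda) \neq 0$. In the first situation I reduce to the structure theory of irreducible $\FF[y]$-modules; in the second I construct an explicit cyclic basis from a $\lambda$-eigenvector of $x$.

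\emph{Case $h(\lambda) = 0$.} The ideal $\m = \DD(x - \lambda)$ is $\delta$-invariant since $x - \lambda$ divides $\delta(x - \lambda) = h$. Because $(x - \lambda)^p = 0$ on $\V$, the space $\ker(x - \lambda) = \V_{\m}$ is nonzero; picking any $0 \neq u \in \V_{\m}$, irreducibility gives $\V = \A_h u$, and Proposition~\ref{prop:genwt}\,(i) then yields $\V = \V_{\m}$. Thus $x$ acts on $\V$ as $\lambda\,\mathsf{id}_\V$, so $\V$ is an irreducible $\FF[y]$-module, necessarily of the form $\FF[y]/(p(y))$ for some prime $p(y) \in \FF[y]$ dividing $g(t) := t^p - \alpha^{p-1}t - \beta$. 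Lemma~\ref{lem:lambda=alpha} gives $\alpha^{p-1} = h'(\lambda)^{p-1}$, which forces $\alpha \in \FF$ (trivially if $h'(\lambda) = 0$; otherwise $\alpha/h'(\lambda)$ is a $(p-1)$-th root of unity, and these all lie in the prime subfield of $\FF$). Consequently, if $\Theta \cap \FF \neq \emptyset$ then every root $\mu + j\alpha$ lies in $\FF$, so $g$ splits into linear factors and $p(y)$ is linear, yielding case~(i). If instead $\Theta \cap \FF = \emptyset$, the dichotomy in the paragraph following~\eqref{eq:roots} shows $g$ is prime in $\FF[t]$, so $p(y) = g(y)$; taking $v_n := y^n v_0$ for any $0 \neq v_0 \in \V$ gives the basis and action of case~(ii) (using $\alpha^{p-1} = h'(\lambda)^{p-1}$).

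\emph{Case $h(\lambda) \neq 0$.} Since $(x - \lambda)^p = 0$ on $\V \neq 0$, fix a nonzero $v_0 \in \ker(x - \lambda)$ and set $v_n := y^n v_0$ for $0 \le n \le p - 1$. Applying~\eqref{eq:xynid} with $r = x$ and using $x^k v_0 = \lambda^k v_0$ yields
$$
x.v_n \;=\; \sum_{j=0}^n (-1)^j \binom{n}{j}\, \delta^j(x)\mid_{x = \lambda}\, v_{n-j},
$$
which is the formula of~(iii). The relation $y^p = \alpha^{p-1} y + \beta$ gives $y.v_{p-1} = \alpha^{p-1} v_1 + \beta v_0$, while $y.v_n = v_{n+1}$ is immediate for $n < p - 1$. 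Hence $\W := \spann_\FF\{v_0, \ldots, v_{p-1}\}$ is an $\A_h$-submodule, and $\W = \V$ by irreducibility.

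To complete case~(iii), it remains to establish the linear independence of $\{v_n\}_{n=0}^{p-1}$. The plan is to prove
$$
(x - \lambda)^n v_n \;=\; (-1)^n\, n!\, h(\lambda)^n\, v_0,
$$
which is nonzero because $h(\lambda) \neq 0$ and $n! \in \FF^*$ for $0 \le n < p$. Applying~\eqref{eq:xynid} with $r = (x - \lambda)^n$ reduces this to the identity
$$
\delta^j\bigl((x - \lambda)^n\bigr)\mid_{x = \lambda} \;=\; \begin{cases} 0 & \text{if } j < n, \\ n!\, h(\lambda)^n & \text{if } j = n, \end{cases}
$$
which follows by induction on $n$ from the Leibniz rule for $\delta$ together with $\delta((x-\lambda)^n) = n(x-\lambda)^{n-1}h$. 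Once this is in hand, $v_n \in \ker(x-\lambda)^{n+1} \setminus \ker(x-\lambda)^n$, and the standard filtration argument (if $\sum a_n v_n = 0$ with largest nonzero index $k$, apply $(x-\lambda)^k$) produces linear independence. The main technical obstacle is precisely this combinatorial $\delta$-identity; everything else amounts to bookkeeping with~\eqref{eq:xynid}.
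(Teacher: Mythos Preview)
Your proof is correct. The treatment of the case $h(\lambda)=0$ matches the paper's argument essentially line for line (you cite Proposition~\ref{prop:genwt}\,(i) where the paper cites (iii), but your citation is in fact the more precise one for this situation).

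Where you genuinely diverge is in establishing linear independence of $\{v_0,\dots,v_{p-1}\}$ when $h(\lambda)\neq 0$. The paper argues by descent: it assumes a minimal dependence $v_n=\sum_{k<n}\xi_k v_k$, applies $x$ to both sides via \eqref{eq:xynid}, and extracts a shorter relation from the $v_{n-1}$-coefficient $-n\,h(\lambda)$, forcing $n=p$. Your approach instead computes $(x-\lambda)^n v_n = (-1)^n n!\,h(\lambda)^n v_0$ through the identity $\delta^j\bigl((x-\lambda)^n\bigr)\big|_{x=\lambda}=\delta_{j,n}\,n!\,h(\lambda)^n$, placing each $v_n$ in $\ker(x-\lambda)^{n+1}\setminus\ker(x-\lambda)^n$. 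Both arguments hinge on the same ingredients, $h(\lambda)\neq 0$ and $n!\in\FF^*$ for $n<p$, but yours yields extra structural information: it identifies the $(x-\lambda)$-filtration level of every basis vector, which makes the argument reusable (and indeed the same computation underlies the irreducibility proof in Lemma~\ref{L:Vlb}). The paper's descent is marginally shorter and avoids the auxiliary $\delta$-identity, but either route is perfectly adequate here.
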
 
\begin{proof}  Assume first that $h(\lambda) = 0$.    Then $x-\lambda$ is a factor of $h$ and $\m=\DD(x-\lambda)$ is a maximal $\delta$-invariant ideal. Since $\V_{\m}\neq 0$, Proposition \ref{prop:genwt}\,(iii) implies that $\V=\V_{\m}$. In particular, $x = \lambda \, \mathsf{id_\V}$,
and $x$ and $y$ commute as   transformations on $\V$.   Since $y$ satisfies the polynomial $t^p-\alpha^{p-1}t - \beta$ on $\V$, $\V$ is a homomorphic image of the module $\Lmg = \UU(\m)/ \bar g \UU(\m)$, where $\bar g(y)=y^p-\alpha^{p-1}y - \beta$, under the identification $\PR_\m = (\DD/\m)[y] \cong \FF[y]$.   By Lemma \ref{lem:lambda=alpha}, we may write $\bar g(y) = y^p - h'(\lambda)^{p-1} y - \beta$, where $h'(\lambda) \in \FF$ since $\lambda \in \FF$.  We have seen that either $\bar g$ has a root in $\FF$ or is a prime polynomial.  If $\mu \in \FF$ is a root of $\bar g$, then $\Theta = \{ \mu + j h'(\lambda) \mid j = 0, 1, \ldots, p-1 \} \subseteq \FF$ is the complete set of roots of $\bar g$, and it follows that $y$ has an eigenvalue $\theta \in \FF$ on $\V$, so case (i) holds.  If $\bar g$ is prime in $\PR_\m$, then $\Lmg$ is irreducible, so $\V=\Lmg$ and $\dim_{\FF}\V=p$, by Theorem~\ref{T:induce}\,(ii). Taking a nonzero vector $v_0 \in \V$ and setting $v_n = y^n.v_0$
for $n=0,1,\dots, p-1$,  we see that the $v_n$ are linearly independent, and hence are a basis of $\V$.    Moreover,
$y.v_n = v_{n+1}$ for $n < p-1$ and $y.v_{p-1} = y^p.v_0 = \alpha^{p-1} y.v_0 + \beta v_0 = 
\alpha^{p-1}v_1 + \beta v_0$, so we have case (ii).  

Now suppose that $h(\lambda) \neq 0$, and take $0 \neq v_0 \in \V$ such that $x.v_0 = \lambda v_0$. 
Assume  $v_m = y^m.v_0$ for $m= 0,1,\dots.$  Let $n$ be minimal such that there is a dependence relation $v_n  = \sum_{k=0}^{n-1}  \xi_k  v_k$.   Observe that $n \leq p$,  as the minimum polynomial in $\FF[t]$ of $y$ on $\V$ divides $t^p - \alpha^{p-1}t - \beta$.       Applying $x$ to this relation and using \eqref{eq:xynid}, we obtain
\begin{eqnarray}\label{eq:xvn} x.v_n &=& xy^n. v_0 = \sum_{j=0}^n (-1)^j {n \choose j} \delta^j(x)\, {\big |}_{x = \lambda} v_{n-j} \\ &=& \sum_{k=0}^{n-1} \xi_k  \sum_{\ell=0}^{k} (-1)^\ell {k \choose \ell}  \delta^{\ell}(x)\, {\big |}_{x = \lambda} v_{k-\ell}. \nonumber
\end{eqnarray}  
The $j = 0$ term cancels with the sum of the $\ell = 0$ terms on the right  by the minimal dependence relation.   The term $v_{n-1}$ occurs in the resulting expression only when  $j = 1$,  and in this case,  we have $(-1)n \delta(x)\, {\big |}_{x = \lambda} v_{n-1}$.    Since $\delta(x){\big |}_{x = \lambda} = h(\lambda) \neq 0$, we will achieve a dependence relation involving $v_{n-1}$, except when $n = p$.   Thus, we have case (iii).     \end{proof}

\begin{lemma}\label{L:Vlb}  Let $\chara(\FF) = p > 0$ and $\beta,\lambda \in \FF$,  and assume $h(\lambda) \neq 0$.   Let  $\m = \DD(x-\lambda)$
and set $z_\beta = y^p - y\frac{\delta^{p}(x)}{h(x)} - \beta$.     Then the quotient 
$\Lmgb: = \UU(\m)/ z_\beta \UU(\m)$   is a $p$-dimensional  irreducible $\A_h$-module with  basis $v_n = y^n. \overline u_\m$, $0 \leq n < p$,   where $\overline u_\m$ is the image of   $u_\m = 1 \ot (1 + \m)$ in
$\Lmgb$.  The $\A_h$-action is
given by 
\begin{eqnarray}\label{eq:Ahactp}  y.v_n &=&  \begin{cases}  v_{n+1} & \qquad \hbox{\rm if} \ \ \   0 \leq n < p-1, \\
\frac{\delta^p(x)}{h(x)}  \Big |_{x = \lambda} v_1 + \beta v_0  & \qquad  \hbox{\rm if}  \ \ \ n = p-1;   \end{cases}  \\
x.v_n &=&  \sum_{j=0}^n  (-1)^j {n \choose j}  \delta^j(x) \mid_{x = \lambda} v_{n-j}.\nonumber \end{eqnarray}
  \end{lemma}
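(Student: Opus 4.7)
The plan is to verify that $\Lmgb$ is a well-defined $\A_h$-module, derive the action formulas, count the dimension, and finally establish irreducibility. First, by Theorem~\ref{T:center} the element $z_\beta = z_p - \beta$ is central in $\A_h$, so $z_\beta \UU(\m)$ is an $\A_h$-submodule of $\UU(\m)$ and $\Lmgb$ is a quotient $\A_h$-module. Since $\DD/\m \cong \FF$ via evaluation at $\lambda$, the space $\UU(\m)$ has $\FF$-basis $\{y^k u_\m \mid k \geq 0\}$.

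The action formulas follow quickly. For $y$, the equations $y.v_n = v_{n+1}$ with $n<p-1$ are just the definitions, while $z_\beta \overline{u_\m} = 0$ in $\Lmgb$ rearranges to $y^p \overline{u_\m} = y \cdot \frac{\delta^p(x)}{h(x)} \overline{u_\m} + \beta \overline{u_\m}$. Since $\frac{\delta^p(x)}{h(x)} \in \FF[x^p] \subseteq \centh$ is central (so it commutes with $y$) and acts on $\overline{u_\m}$ as its evaluation at $\lambda$, I can read off the asserted formula for $y.v_{p-1}$. The $x$-formula comes directly from \eqref{eq:xynid} together with the identity $\delta^j(x).u_\m = \delta^j(x)|_{x=\lambda}\,u_\m$ in $\UU(\m)$.

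For the dimension count, I need $\{v_0,\dots,v_{p-1}\}$ to both span and be linearly independent. Spanning is by induction: rewriting $z_\beta y^k u_\m = y^{k+p} u_\m - \frac{\delta^p(x)}{h(x)}\big|_{x=\lambda}\, y^{k+1} u_\m - \beta y^k u_\m$ (using centrality to slide the scalar past $y^k$ and evaluate at $\lambda$) shows that in the quotient each $y^{k+p} \overline{u_\m}$ lies in the span of $y^{k+1} \overline{u_\m}$ and $y^k \overline{u_\m}$. Linear independence reduces to showing $z_\beta \UU(\m) \cap \spann_{\FF}\{y^j u_\m \mid 0 \leq j < p\} = 0$: any element $\sum_{k=0}^N c_k z_\beta y^k u_\m$ has a unique basis contribution $c_N y^{N+p} u_\m$ in the top position, so membership in $\spann_{\FF}\{y^j u_\m \mid j < p\}$ forces $c_N = 0$, and then inductively $c_k = 0$ for all $k$. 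I expect this leading-term bookkeeping to be the main point of care in the argument.

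For irreducibility, I will exploit that $x - \lambda$ acts as a lowering operator. The $j=1$ term in the $x$-action formula gives $(x-\lambda).v_n = -n\, h(\lambda)\, v_{n-1} + (\text{combinations of } v_{n-2},\dots,v_0)$ for $1 \leq n \leq p-1$, and since $n \not\equiv 0 \pmod p$ and $h(\lambda) \neq 0$, the subdiagonal coefficient is nonzero. Given a nonzero $w = \sum_{k=0}^m a_k v_k$ with $a_m \neq 0$ inside a submodule $\mathsf{W}$, iterated application of $x - \lambda$ then yields a nonzero scalar multiple of $v_0$, so $v_0 \in \mathsf{W}$. Since $v_n = y^n.v_0$ for all $n < p$, one concludes $\mathsf{W} = \Lmgb$, proving irreducibility.
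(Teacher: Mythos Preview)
Your proof is correct and follows essentially the same outline as the paper's: centrality of $z_\beta$ gives well-definedness, the action formulas come from \eqref{eq:xynid}, and irreducibility is established by using $(x-\lambda)$ as a lowering operator to reach $v_0$. The one minor difference is in the linear-independence step: you argue directly in $\UU(\m)$ via a leading-term count on $\sum_k c_k z_\beta y^k u_\m$, whereas the paper reuses the $x$-action argument from \eqref{eq:xvn} (a minimal dependence relation among the $v_n$ would, after applying $x$, yield a shorter one since the coefficient $-n h(\lambda)$ of $v_{n-1}$ is nonzero for $1\le n\le p-1$). Both are equally valid; your version is arguably more self-contained.
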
   
  
\begin{proof}   Since $y^p - y \frac{\delta^p(x)}{h(x)}$ is central in $\A_h$,   it is apparent that 
$z_\beta \UU(\m)$ is a submodule of $\UU(\m)$, and hence that  the corresponding quotient
$\Lmgb$ is an $\A_h$-module.    As  $\{y^n. u_\m \mid n = 0,1,\dots\}$ is a basis for $\UU(\m)$,   the module
$\Lmgb$ is spanned by the  vectors $y^n. \overline u_\m$,  $n = 0,1,\dots$, where
$\overline u_\m$ is the image of $u_\m$ in $\Lmgb$.   However, since 
 $y^p. \overline u_\m  =  \frac{\delta^p(x)}{h(x)}\big |_{x = \lambda}y. \overline u_\m + \beta \overline u_\m$, we see that 
the dimension of $\Lmgb$ is at most $p$.   The argument  that the vectors $v_n: = y^n.\overline u_\m$ are linearly
independent for $n = 0,1,\dots, p-1$ is the same as that given in  \eqref{eq:xvn}.  

Now if $\mathsf{W}$ is a nonzero submodule of $\Lmgb$, and $0 \neq w = \sum_{k=0}^n  \gamma_k v_k \in \mathsf{W}$ with $n$ minimal,   then $$(x-\lambda).w =  \gamma_n  \sum_{j=1}^n  (-1)^j {n \choose j}  \delta^j(x) \mid_{x = \lambda} 
v_{n-j} - \sum_{k=1}^{n-1} \gamma_k  \sum_{\ell=1}^k (-1)^\ell{k \choose \ell}  \delta^\ell(x) \mid_{x = \lambda}v_{k-\ell},$$
will give a smaller length element in $\mathsf W$  if  $(-1) {n \choose1} \gamma_n \delta(x) \mid_{x = \lambda}  = -n \gamma_n h(\lambda) \neq 0$.  As $h(\lambda) \neq 0$,  it must be that $n = 0$, and  $w = \gamma_0 v_0$. 
But then applying $y^n$ to $w$ shows that $v_n \in \mathsf{W}$ for all $n= 0,1,\dots, p-1$.   Hence, $\mathsf{W} =
\Lmgb$, which is irreducible. 
\end{proof}

\begin{cor}\label{C:algclo}  Assume $\FF$ is an algebraically closed field of characteristic $p > 0$,  and  let $\V$ be an irreducible $\A_h$-module.  Then either 
\begin{itemize}
\item[{\rm (i)}] for some $\lambda, \theta \in \FF$ with $h(\lambda) = 0$,  \    $\V \cong \V_{\lambda, \theta} = \FF v_{\lambda, \theta}$,   where 
$$x.v_{\lambda, \theta} = \lambda v_{\lambda, \theta}, \quad   y.v_{\lambda, \theta} = \theta v_{\lambda, \theta},  \quad  \hbox{\rm or}$$
\item[{\rm (ii)}]  for some $\lambda, \beta \in \FF$ with $h(\lambda) \neq 0$,  
$\V \cong  \Lmgb= \bigoplus_{n=0}^{p-1}  \FF v_n$,  where $\m = \DD(x-\lambda)$ and  the action of $\A_h$ is given in 
\eqref{eq:Ahactp}.  
\end{itemize}
Hence, if $\mathsf{P}$ is a primitive ideal of $\A_h$,  then $\mathsf{P}$ is isomorphic to
one of the following:
\begin{itemize}  
\item[{\rm (i)}]  $\ann_{\A_h}(\Lmg)$ for some $\m = \DD(x-\lambda)$, where $h(\lambda) = 0$,
and some $g = y- \theta$, where  $\theta \in \FF$, or
\item[{\rm (ii)}]  $\ann_{\A_h}(\Lmgb)$ for some $\m = \DD(x-\lambda)$, where $h(\lambda) \neq 0$,
and some $z_\beta = y^p - y\frac{\delta^{p}(x)}{h(x)} - \beta \in \centh$,  where $\beta \in \FF$. 
\end{itemize}
\end{cor}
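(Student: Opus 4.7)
The plan is to reduce the classification to Theorem~\ref{T:Vdot} and Lemma~\ref{L:Vlb} by first verifying that the standing Assumptions~\ref{A:charphypos} hold for any irreducible $\A_h$-module $\V$ over an algebraically closed $\FF$. Since $\V$ is finite dimensional by Proposition~\ref{P:findimp}, the extension of Schur's lemma noted at the start of the section forces $x^p$, $\frac{\delta^p(x)}{h(x)}$, and $z_p = y^p - y\frac{\delta^p(x)}{h(x)}$ to act on $\V$ by scalars in $\FF$. Algebraic closure then lets me pick $\lambda \in \FF$ with $\lambda^p$ equal to the scalar for $x^p$, and $\alpha \in \FF$ with $\alpha^{p-1}$ equal to the scalar for $\frac{\delta^p(x)}{h(x)}$; setting $\beta$ equal to the scalar by which $z_p$ acts then puts us inside Assumptions~\ref{A:charphypos}.

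Theorem~\ref{T:Vdot} then presents three cases. Case (ii) cannot occur: the polynomial $t^p - \alpha^{p-1}t - \beta$ has a root in the algebraically closed field $\FF$, so $\Theta\cap\FF$ is nonempty. Case (i) immediately yields the one-dimensional module $\V_{\lambda,\theta}$ of part (i) of the corollary. In case (iii), I would identify $\V$ with $\Lmgb$ as follows. The central element $\frac{\delta^p(x)}{h(x)}$ lies in $\FF[x^p]$, and since $x^p$ acts on $\V$ as $\lambda^p$, the scalar by which $\frac{\delta^p(x)}{h(x)}$ acts must equal $\frac{\delta^p(x)}{h(x)}\big|_{x=\lambda}$, and hence coincides with $\alpha^{p-1}$. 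With this alignment, the $\A_h$-action described in case (iii) of Theorem~\ref{T:Vdot} matches verbatim the formulas of Lemma~\ref{L:Vlb} for $\Lmgb$ with $\m = \DD(x-\lambda)$. The vector $v_0$ satisfies $(x-\lambda)v_0 = 0$ and $z_\beta v_0 = 0$, so Proposition~\ref{P:universal} produces a nonzero $\A_h$-module map $\UU(\m)\to\V$ which factors through $\Lmgb$; irreducibility of both source (by Lemma~\ref{L:Vlb}) and target then forces this map to be an isomorphism.

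For the primitive ideals, every such ideal is the annihilator of some irreducible $\A_h$-module, so by the classification above it equals either $\ann_{\A_h}(\V_{\lambda,\theta})$ or $\ann_{\A_h}(\Lmgb)$ for the stated data. The only remaining bookkeeping is to recognize $\V_{\lambda,\theta}$ as $\Lmg$ with $\m = \DD(x-\lambda)$ and $g = y - \theta$, which follows from Theorem~\ref{T:induce}(ii) since $y-\theta$ is a monic prime polynomial of degree one in $\PR_\m \cong \FF[y]$ when $h(\lambda)=0$. The main technical point of the whole argument is the identification $\alpha^{p-1} = \frac{\delta^p(x)}{h(x)}\big|_{x=\lambda}$ that bridges Theorem~\ref{T:Vdot} and Lemma~\ref{L:Vlb}; once it is in place, nothing else requires any real calculation.
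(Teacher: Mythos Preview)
Your proposal is correct and follows essentially the same route as the paper's proof: verify Assumptions~\ref{A:charphypos} via Schur's lemma, rule out case~(ii) of Theorem~\ref{T:Vdot} by algebraic closure, and in case~(iii) identify $\V$ with $\Lmgb$ through a surjection from $\UU(\m)$ that factors through $\Lmgb$. You have made explicit two points the paper leaves implicit---the identification $\alpha^{p-1}=\frac{\delta^p(x)}{h(x)}\big|_{x=\lambda}$ (via $\frac{\delta^p(x)}{h(x)}\in\FF[x^p]$) and the fact that the map $\UU(\m)\to\V$ kills $z_\beta\UU(\m)$ because the central element $z_\beta$ annihilates all of $\V$---but these are exactly the details the paper's one-line appeal to Lemma~\ref{L:Vlb} is hiding.
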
  

\begin{proof} This is a direct consequence of Theorem \ref{T:Vdot}  and Lemma \ref{L:Vlb},  since only
cases (i) and (iii) of that theorem occur when $\FF$ is algebraically closed.  In case (iii),  $\V$ must be a homomorphic image of
the irreducible $\A_h$-module $\Lmgb$ for some $\lambda$ and $\beta$ in $\FF$ by
Lemma \ref{L:Vlb},
so $\V$ must be isomorphic to $\Lmgb$.   
\end{proof}

\begin{section}{The Combinatorics of $\delta^k(x)$}  \end{section}

We have seen that many of the expressions for the action of $\A_h$ on an irreducible module involve terms
$\delta^k(x)$ for some $k \geq 1$, where $\delta$ is the derivation of $\DD$ given by
$\delta(f) = f' h$, and $'$ denotes the usual derivative.    Here,  we  first determine an expression for $\delta^k(f)$ for arbitrary $f$
and then specialize to the case $f = x$.    \medskip

Suppose $\nu$ is a partition of some integer $n$, and let  $\ell(\nu)$ denote the number of nonzero parts 
of $\nu$.    We write $\nu = (n^{\nu_n}, \dots, 2^{\nu_2}, 1^{\nu_1})$ to indicate that $\nu$
has $\nu_1$ parts equal to 1,  \, $\nu_2$ parts equal to 2, and so forth.  Thus,
$\sum_{k=1}^n k  \nu_k  = n$ and $\sum_{k=1}^n \nu_k = \ell(\nu)$.   For example, $\nu = (4,2^2,1^3)$ is 
a partition of 11,  which we write  $\nu \vdash 11$,  with $\nu_1 = 3$, $\nu_2 = 2$, $\nu_3 = 0$,  $\nu_4 = 1$,  and
$\ell(\nu) = 6$.   
 
Let $\emptyset$ denote the unique partition of 0 and set $h^{(\emptyset)} = 1$.  For $j \geq 1$,  let $h^{(j)} = (\frac{d}{dx})^j(h)$.   Then for   $\nu = (n^{\nu_n}, \dots, 2^{\nu_2}, 1^{\nu_1}) \vdash n$,  we define 
$$h^{(\nu)} :=  (h^{(1)})^{\nu_1}(h^{(2)})^{\nu_2}  \cdots (h^{(n)})^{\nu_n}.$$

\begin{lemma}\label{L:dkf}   For $k \geq 1$,    
$$\delta^{k}(f) = \sum_{n=0}^{k-1} \sum_{\nu \vdash n}  b_\nu^k  f^{(k-n)} h^{(\nu)} h^{k-\ell(\nu)},$$
where the $b_\nu^k$ are nonnegative integer coefficients.   \end{lemma}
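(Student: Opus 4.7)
The plan is to induct on $k$. The base case $k=1$ is immediate, since $\delta(f) = f'h$ matches the formula with $n=0$, $\nu = \emptyset$, $h^{(\emptyset)}=1$, and $b_\emptyset^1 = 1$.

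For the inductive step, assuming the formula at level $k$, I would apply $\delta$ to each summand $T_{n,\nu} := f^{(k-n)} h^{(\nu)} h^{k-\ell(\nu)}$. The derivation property of $\delta$ together with $\delta(g) = g'h$ produces three kinds of contributions. First, (A) differentiating $f^{(k-n)}$ yields
$$ f^{(k-n+1)} h \cdot h^{(\nu)} h^{k-\ell(\nu)} \;=\; f^{(k+1-n)} h^{(\nu)} h^{(k+1)-\ell(\nu)}, $$
a term still indexed by $(n,\nu)$. Second, (B) for each $j$ with $\nu_j > 0$, differentiating one of the $\nu_j$ copies of $h^{(j)}$ inside $h^{(\nu)}$ and then multiplying by $h$ yields
$$ \nu_j\, f^{(k+1-(n+1))} h^{(\nu^{+j})} h^{(k+1) - \ell(\nu^{+j})}, $$
where $\nu^{+j} \vdash n+1$ is obtained from $\nu$ by promoting one part of size $j$ to a part of size $j+1$ (so $\ell(\nu^{+j}) = \ell(\nu)$). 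Third, (C) differentiating $h^{k-\ell(\nu)}$ yields
$$ (k-\ell(\nu))\, f^{(k+1-(n+1))} h^{(\nu^+)} h^{(k+1)-\ell(\nu^+)}, $$
where $\nu^+ \vdash n+1$ is $\nu$ with one extra part of size $1$ adjoined (so $\ell(\nu^+) = \ell(\nu) + 1$).

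Collecting like terms over all $(n,\nu)$ realizes $\delta^{k+1}(f)$ as a sum indexed by partitions $\mu$ of integers $m$ with $0 \leq m \leq k$, which is exactly the shape of the formula at level $k+1$. Reading off coefficients yields the explicit recursion
$$ b_\mu^{k+1} \;=\; b_\mu^k \;+\; \sum_{\substack{(\nu,j) \\ \nu^{+j} = \mu}} \nu_j\, b_\nu^k \;+\; \sum_{\substack{\nu \\ \nu^+ = \mu}} (k - \ell(\nu))\, b_\nu^k, $$
which keeps the $b_\mu^k$ in $\mathbb{Z}_{\geq 0}$. The only nonnegativity point worth checking is the factor $k - \ell(\nu)$ in the (C) contributions, but whenever $\nu$ indexes a term of $\delta^k(f)$ we have $\ell(\nu) \leq |\nu| = n \leq k-1$, so this factor is a positive integer.

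I expect the main obstacle to be purely combinatorial bookkeeping: verifying that the two natural operations on partitions (promoting a part $j \to j+1$ in (B) and adjoining a new part of size $1$ in (C)) correspond exactly to the two ways $\delta$ acts on $h$-factors, so that every contribution lands in the claimed shape. No essentially new idea is required beyond this.
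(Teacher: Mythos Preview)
Your proposal is correct and follows essentially the same approach as the paper: induction on $k$, applying the product rule to each summand to obtain three types of contributions (from differentiating $f^{(k-n)}$, from promoting a part $j\to j+1$ inside $h^{(\nu)}$, and from adjoining a new part of size $1$ when differentiating $h^{k-\ell(\nu)}$), and then reading off the nonnegative recursion for the coefficients. The only cosmetic difference is notation (the paper writes $\nu[j]$ and $\nu^{+}$ where you write $\nu^{+j}$ and $\nu^{+}$) and that the paper notes the first summand $b_\mu^k$ in your recursion should be read as $0$ when $|\mu|=k$, which you are implicitly assuming.
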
 
 
Before beginning the proof, and as the initial inductive steps, we present some examples:

\begin{eqnarray*}  \delta^1(f) &=&  f'h = f^{(1)}h,  \\
 \delta^2(f) &=& f'' h^2 +f' h' h = f^{(2)}h^2 + f^{(1)} h^{(1)} h,  \\
  \delta^3(f) &=& f''' h^3  + 3 f'' h' h^2 + f' h'' h^2 + f' (h')^2 h   \\
              &=&  f^{(3)} h^3 + 3 f^{(2)} h^{(1)} h^2 + f^{(1)} h^{(2)} h^2 + f^{(1)}( h^{(1)})^2 h.  
    \end{eqnarray*} 

\begin{proof}  We can assume the lemma is true for $k$ and prove it for $k+1$.   Suppose
there are nonnegative integers $b_\nu^k$ so that 

$$\delta^k(f) = \sum_{n=0}^{k-1} \sum_{\nu \vdash n}   b_\nu^k  f^{(k-n)}  h^{(\nu)} h^{k-\ell(\nu)}.$$  
Then 
\begin{eqnarray}\label{eq:dk+1}  \delta^{k+1}(f) &=& \sum_{n=0}^{k-1} \sum_{\nu \vdash n} 
  b_\nu^k  \Big( f^{(k-n)} h^{(\nu)} h^{k-\ell(\nu)}\Big)' h \\
&=&\sum_{n=0}^{k-1} \sum_{\nu \vdash n}   b_\nu^k   f^{(k+1-n)}  h^{(\nu)} h^{k+1-\ell(\nu)}+
\sum_{n=0}^{k-1} \sum_{\nu \vdash n}   b_\nu^k  f^{(k-n)} \Big( h^{(\nu)}\Big)' h^{k+1-\ell(\nu)} \nonumber \\ && \qquad  + 
\sum_{n=0}^{k-1} \sum_{\nu \vdash n}  \big (k - \ell(\nu)\big) b_\nu^k  f^{(k-n)} h^{(\nu)} h' h^{k-\ell(\nu)}.
\nonumber  \end{eqnarray} 

Observe  for $\nu \vdash n$ that 
$$\left( h^{(\nu)}\right)' = \sum_{j = 1}^n  \nu_j (h^{(1)})^{\nu_1} \cdots (h^{(j)})^{\nu_j-1}(h^{(j+1)})^{\nu_{j+1}+1} \cdots (h^{(n)})^{\nu_n}.$$
In the $j$th summand on the right,  a part of size $j$ has been converted to a part of size $j+1$.   Now if $\nu_j \neq 0$
for some $j$ such that  $1 \leq j \leq n$,   we set 
\begin{equation}\label{eq:nuj} \nu[j] = \begin{cases} (n^{\nu_n}, \dots, (j+1)^{\nu_{j+1}+1},  j^{\nu_j-1}, \dots, 2^{\nu_2}, 1^{\nu_1} ) & \ \ \  \hbox{\rm if} \ \  1 \leq j < n, \\
 ( (n+1)^{1}) \ \ \  \hbox{\rm if} \ \   j =n. \end{cases}
\end{equation}
Then  $\nu[j]  \vdash n+1$,  and $\ell(\nu[j]) = \sum_{i=1}^n \nu_i = \ell(\nu)$.       Hence,   

\begin{equation*} b_\nu^k  f^{(k-n)} \Big( h^{(\nu)}\Big)' h^{k+1-\ell(\nu)}
=   \sum_{j=1}^n  b_\nu^k \nu_j  f^{(k+1-(n+1))} h^{(\nu[j])} h^{k+1-\ell(\nu[j])},  \end{equation*} 
 where $h^{(\nu[j])}$ should be interpreted as $1$ if $\nu_j = 0$. 
 
Now let's consider a term  $h^{(\nu)} h' h^{k-\ell({\nu})}$ in the last sum of \eqref{eq:dk+1},  where $\nu \vdash n$.   
Then  $h^{(\nu)} h'$ corresponds to the partition  
\begin{equation}\label{eq:nuplus} \nu^+ = (n^{\nu_{n}}, \dots, 2^{\nu_2}, 1^{\nu_1+1})\vdash n+1,\end{equation}
which has one more part equal to 1  than does $\nu$.    Hence 
$k+1 - \ell(\nu^+) = k - \ell(\nu)$, and the corresponding term is
$$\big (k - \ell(\nu)\big)b_\nu^k  f^{(k-n)}  h^{(\nu)} h' h^{k-\ell(\nu)}  =  \big (k+1 - \ell(\nu^+)\big) b_\nu^k  f^{(k+1-(n+1))}  h^{(\nu^+)} h^{k+1-\ell({\nu^+})}.$$   

For  $\mu \vdash m$,  where $0 \leq m <  k+1$,   if 
$f^{(k+1-m)} h^{(\mu)} h^{k+1-\ell(\mu)} \neq 0$, it appears  in \eqref{eq:dk+1} with a nonnegative integer 
coefficient $b_{\mu}^{k+1}$, which is obtained from summing the following:
\begin{itemize}
\item [{\rm (i)}]  $b_{\mu}^k$   \ \  if   $m < k$,   
\item [{\rm (ii)}]  $\nu_j b_{\nu}^k$  \ \ if  $\nu \vdash m-1$ is a partition  such that 
$\nu[j] = \mu$,   
\item[{\rm (iii)}]  $(k-\ell(\nu))b_{\nu}^k$ \ \   if $\nu \vdash m-1$ is a partition such that $\nu^+ = \mu$.  \end{itemize}    Hence $b_{\mu}^{k+1}$ is
a nonnegative integer and 
$$\delta^{k+1}(f) = \sum_{m=0}^k  \sum_{\mu \vdash m}  b_{\mu}^{k+1}  f^{(k+1-m)} h^{(\mu)} h^{k+1-\ell(\mu)}.$$
\end{proof}
Since  $f^{(j)} = 0$ for all $j \geq 2$  when $f = x$,  Lemma \ref{L:dkf} reduces in this special case to

\begin{cor}\label{C:dkx}  For $k \geq 1$,    
$$\delta^{k}(x) = \sum_{\mu \vdash k-1}  c_\mu^k  h^{(\mu)} h^{k-\ell(\mu)},$$
where the coefficients  $c_\mu^k$ are nonnegative integer coefficients, which are obtained from 
the coefficients $c_{\nu}^{k-1}$ appearing in $\delta^{k-1}(x)$  by summing all the following terms: 
  \begin{itemize}
\item [{\rm (a)}]  $\nu_j c_{\nu}^{k-1}$  \ \ if  $\nu \vdash k-2$ is a partition  such that 
$\nu[j] = \mu$, where $\nu[j]$ is as in \eqref{eq:nuj};    
\item[{\rm (b)}]  $(k-1-\ell(\nu))c_{\nu}^{k-1}$ \ \  if $\nu \vdash k-2$ is a partition such that $\nu^+ = \mu$, where $\nu^+$ is as in \eqref{eq:nuplus}.  \end{itemize}     
   \end{cor}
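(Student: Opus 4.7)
The plan is to specialize Lemma~\ref{L:dkf} to the polynomial $f = x$ and then extract the recurrence from the inductive step in the proof of that lemma. Since $x^{(1)} = 1$ and $x^{(j)} = 0$ for all $j \geq 2$, the factor $f^{(k-n)}$ in the general formula
$$\delta^{k}(f) = \sum_{n=0}^{k-1} \sum_{\nu \vdash n} b_\nu^k\, f^{(k-n)} h^{(\nu)} h^{k-\ell(\nu)}$$
forces $k - n = 1$, so only the summands with $n = k-1$ survive. Setting $c_\mu^k := b_\mu^k$ for $\mu \vdash k-1$ immediately gives the asserted expansion, and nonnegativity of the $c_\mu^k$ is inherited from that of the $b_\mu^k$.

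For the recurrence, I would invoke the explicit description of $b_\mu^{k+1}$ derived at the end of the proof of Lemma~\ref{L:dkf}, applied with the index shift $k+1 \leftrightarrow k$. That description assembles $b_\mu^{k}$ (for $\mu \vdash m$) from three contributions: (i) $b_\mu^{k-1}$ whenever $m < k-1$; (ii) $\nu_j\, b_\nu^{k-1}$ over pairs $(\nu,j)$ with $\nu \vdash m-1$ and $\nu[j] = \mu$; and (iii) $(k-1-\ell(\nu))\, b_\nu^{k-1}$ over $\nu \vdash m-1$ with $\nu^+ = \mu$. Setting $m = k-1$, condition (i) reads $k-1 < k-1$, which is false, so case (i) drops out. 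The remaining two cases involve $\nu \vdash k-2$, and after renaming $b = c$ they are precisely the contributions listed in the statement.

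The only real point to check is the boundary behavior: that case (i) of the recurrence for $b$ vanishes at exactly $m = k-1$, so that $c_\mu^k$ depends solely on the coefficients $c_\nu^{k-1}$ with $\nu \vdash k-2$, matching the expansion of $\delta^{k-1}(x)$. This is where the specialization $f = x$ really does the work: for a general $f$ one would also inherit a term $b_\mu^{k-1}$ (corresponding to differentiating $f^{(k-n)}$ to $f^{(k-n+1)}$ without touching $h$), but that branch contributes zero here because $x^{(j)} = 0$ for $j \geq 2$. With this observation the corollary follows with no further computation.
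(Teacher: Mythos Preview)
Your proposal is correct and is precisely the paper's approach: the corollary is obtained by specializing Lemma~\ref{L:dkf} to $f=x$, where $x^{(j)}=0$ for $j\ge 2$ forces $n=k-1$ and kills the case~(i) term of the recurrence for $b_\mu^{k}$, leaving exactly the two contributions listed as (a) and (b). If anything, you have spelled out more detail than the paper, which simply states that the lemma ``reduces in this special case'' to the corollary.
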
  \medskip

In the table below, for $k = 1, \dots, 7$ and for each partitition $\mu \vdash k-1$, we display  the coefficient $c_{\mu}^k$ as a subscript on $\mu$.   \bigskip

\noindent
%\small{
$$\boldsymbol{\begin{array}{c|ccccccccc} 
\hspace{-.1truein} k & c_{\mu}^k &  &&&  &&&   \\
 \hline \\
\hspace{-.1truein} \bf{ 1} & \bf{(0)_{\color{red} 1}} &&&&&&  \\ \\
\hspace{-.1truein}\bf{ 2} & \bf{(1)_{\color{red} 1}} &&&&&&  \\
\\ 
\hspace{-.1truein}\bf{3} & \bf{(2)_{\color{red} 1}}  & \bf{(1^2)_{\color{red} 1}} &&&&& \\ 
\\ 
\hspace{-.1truein}\bf{4} & \bf{(3)_{\color{red} 1}}  & \bf{(2,1)_{\color{red} 4}} & \bf{(1^3)_{\color{red} 1}}&&&& \\ \\ 
\hspace{-.1truein}\bf {5} & \bf{(4)_{\color{red} 1}}  & \bf{(3,1)_{\color{red} 7}} &  \bf{(2^2)_{\color{red} 4}}&\bf{(2,1^2)_{\color{red} 11}} &\bf{(1^4)_{\color{red} 1}}&&
\\ \\  
\hspace{-.1truein}\bf{6} & \bf{(5)_{\color{red}1}}  & \bf{ (4,1)_{\color{red} 11}} &  \bf{ (3,2)_{\color{red} 15}} &\bf{(3,1^2)_{\color{red} 32}} &
\bf{(2^2,1)_{\color{red} 34}} &
\bf{(2,1^3)_{\color{red} 26}}&\bf{(1^5)_{\color{red} 1}}  \\ \\ 
\hspace{-.1truein} \bf{7} & \bf{(6)_{\color{red}1}}  & \bf{ (5,1)_{\color{red} 16}} &  \bf{ (4,2)_{\color{red} 26}} &\bf{(4,1^2)_{\color{red} 76}} &
\bf{(3^2)_{\color{red} 15}} &
\bf{(3,2,1)_{\color{red} 192}}&  \bf{ (3,1^3)_{\color{red} 122}}
 \\ \\
\hspace{-.1truein}\bf{7} & & & &   \bf{ (2^3)_{\color{red} 34}} & \bf{ (2^2,1^2)_{\color{red} 180}}&  \bf{ (2,1^4)_{\color{red} 57}} &  \bf{(1^6)_{\color{red} 1}}  \\
\hspace{-.1truein}{\bf cont.} &&&&&&&  \end{array}}$$
%}

\smallskip
\begin{exams} (1)  Consider the partition  $\mu = (3,2)\vdash 5$, so here $k=6$.    Since  $\mu = \nu[2]$, for $\nu = (2^2)$,  and $\mu = \pi[1]$ for $\pi=(3,1)$, we have  $c_{\mu}^6  =  2c_{\nu}^5  + c_{\pi}^5  =  2 \cdot 4 + 7  =  15$, as displayed in the table. 

(2)  As another example,  consider the partition $\mu = (2^2,1) \vdash 5$.    Now  $\mu = \nu^+$ for $\nu = (2^2) \vdash 4$,  and $\mu =  \pi[1]$ for $\pi = (2,1^2) \vdash 4$.    Thus,  
$c_{\mu}^6  =  (5-\ell(\nu))c_{\nu}^5 +  2c_\pi^5  =  3 \cdot  4 +  2 \cdot 11 =  34$, as shown.  \end{exams} 
\medskip

The coefficients $c_{\mu}^k$ satisfy some intriguing properties.    We illustrate this with one particular example in
the next proposition. 

\begin{prop} Assume $c_{\mu}^k$ are the coefficients appearing in Corollary \ref{C:dkx}.    Then 
$$\sum_{\mu \vdash k-1} c_{\mu}^k  =  (k-1)\,!.$$  \end{prop}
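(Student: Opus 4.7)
The plan is to prove the identity by induction on $k$, using the recursion for the coefficients $c_\mu^k$ given in Corollary \ref{C:dkx}. Let $S_k := \sum_{\mu \vdash k-1} c_\mu^k$. For the base case, $\delta^1(x) = h$, so $c_\emptyset^1 = 1$ and $S_1 = 1 = 0!$. For the induction, I will establish that $S_k = (k-1)\,S_{k-1}$ for all $k \geq 2$, which combined with the base case yields $S_k = (k-1)!$.

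To derive the recursion $S_k = (k-1)\,S_{k-1}$, I sum the recurrence of Corollary \ref{C:dkx} over all $\mu \vdash k-1$ and interchange the order of summation. Each $\mu \vdash k-1$ receives contributions $\nu_j c_\nu^{k-1}$ from pairs $(\nu, j)$ with $\nu \vdash k-2$, $\nu_j > 0$, and $\nu[j] = \mu$, together with $(k-1-\ell(\nu))c_\nu^{k-1}$ whenever $\nu^+ = \mu$. Since the maps $(\nu,j) \mapsto \nu[j]$ and $\nu \mapsto \nu^+$ are functions into $\{\mu \vdash k-1\}$, swapping the order of summation reindexes the double sum by $\nu \vdash k-2$; each such $\nu$ then contributes
\[
c_\nu^{k-1}\left(\sum_{j \geq 1} \nu_j \;+\; (k-1-\ell(\nu))\right)
\]
to $S_k$. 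By the very definition of $\ell(\nu)$, one has $\sum_{j \geq 1} \nu_j = \ell(\nu)$, so the parenthesized quantity collapses to $\ell(\nu) + (k-1-\ell(\nu)) = k-1$, independently of $\nu$. Hence $S_k = (k-1) \sum_{\nu \vdash k-2} c_\nu^{k-1} = (k-1)\,S_{k-1}$, as required.

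I do not foresee any serious obstacle; the crux is the clean cancellation of $\ell(\nu)$, which reflects the fact that the recurrence distributes $k-1$ units of ``growth'' between two complementary operations: enlarging an existing part (case (a), contributing $\ell(\nu)$ in total) or adjoining a new part of size $1$ (case (b), contributing $k-1-\ell(\nu)$). As a conceptual cross-check, one can specialize $h$ to $e^x$ in a formal power series ring over a field of characteristic $0$: then $h^{(\mu)} = e^{\ell(\mu)x}$ for every partition $\mu$, so $\delta^k(x) = S_k\,e^{kx}$, while a direct computation using $\delta = e^x \tfrac{d}{dx}$ yields $\delta^k(x) = (k-1)!\,e^{kx}$. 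Either argument establishes the identity.
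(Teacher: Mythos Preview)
Your proof is correct and follows essentially the same approach as the paper's: both argue by induction, and the inductive step in each case rests on the observation that every partition $\nu \vdash k-2$ contributes exactly $(k-1)c_\nu^{k-1}$ to $S_k$, because the contributions from operations (a) and (b) are $\ell(\nu)$ and $k-1-\ell(\nu)$ respectively. The paper dresses this up as a combinatorial ``multiply by $(1)$'' operation on padded $k$-tuples, whereas you sum the recurrence directly; your $h=e^x$ cross-check is a pleasant addition not in the paper.
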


\begin{proof}  We proceed by induction on $k$.    Verification for small values of $k$ can be done by adding
the subscripts in the $k$th  row of the  table.     We assume the result for $k$ and
show it for $k+1$.   To accomplish this, 
we define a new sort of ``multiplication'' that will help to reveal the proof. 

\begin{itemize}
\item {\bf Step 1}.  {\it  List the parts of a partition $\nu$ of $k-1$ with multiplicity in descending order, and add sufficiently many  0's to get  a $k$-tuple $\tilde \nu$ with weakly descending components.  Multiply the $k$-tuple $\tilde \nu$  by $c_{\nu}^k$,  then sum over $\nu \vdash k-1$. } 

To illustrate this,  consider the  line corresponding to $k=4$ in the table,  which is   $\boldsymbol{(3)_{\color{red} 1} \ \  (2,1)_{\color{red} 4}  \ \  (1^3)_{\color{red} 1}}$.  
In this step we rewrite it as 
$$\boldsymbol{(3,0,0,0) + {\color{red} 4}(2,1,0,0) + (1,1,1,0)}.$$

\item {\bf Step 2}.   {\it ``Multiply''  by $(1)$; i.e.  add 1 to each component in all possible ways
and sum the result.}
\begin{eqnarray*} && \hspace{-.5 truein} \bf{(1) \ast \Big((3,0,0,0) + {\color{red} 4}(2,1,0,0) + (1,1,1,0) \Big) = \qquad \qquad } \\
&&\qquad \quad  \bf{\ \ (4,0,0,0) + (3,1,0,0) + (3,0,1,0) + (3,0,0,1)}  \\
&&\qquad \quad   \bf{+  {\color{red} 4}(3,1,0,0) +  {\color{red} 4}(2,2,0,0) +{\color{red}  4}(2,1,1,0) + {\color{red} 4}(2,1,0,1)} \\
&&\qquad \quad  \bf{+  (2,1,1,0) +  (1,2,1,0) + (1,1,2,0) + (1,1,1,1).}  \end{eqnarray*}

\item{\bf Step 3}. {\it Collect terms that are the same after permutation of the components.}
$$\boldsymbol{(4,0,0,0) + {\color{red} 7}(3,1,0,0)+ {\color{red} 4}(2,2,0,0) +{\color{red} 11}(2,1,1,0) + (1,1,1,1).}$$
We can read off the line $k = 5$ in the table from this.   

\end{itemize}
This process is just a different way of doing what is described in  Corollary \ref{C:dkx} to determine
the coefficient  $c_{\mu}^{k+1}$.   Indeed,  adding 1 to the nonzero parts of a $k$-tuple
 takes into account the multiplicities
in (a) of that corollary,  and adding 1 to the $k-\ell(\mu)$ components that are 0 accounts for (b).  
Thus, the resulting coefficient of each $\mu \vdash k$ is $c_{\mu}^{k+1}$.  
Now suppose that $\sum_{\nu \vdash k-1}  c_{\nu}^k = (k-1)!.$
The sum of the coefficients in $(1) \ast  \sum_{\nu \vdash k-1}  c_{\nu}^{k} \tilde \nu$
is just $\sum_{\mu \vdash k} c_{\mu}^{k+1}$.       But each   $c_{\nu}^{k}$
appears $k$ times in $(1) \ast  \sum_{\nu \vdash k-1}  c_{\nu}^{k} \tilde \nu$.   Thus,

$$\sum_{\mu \vdash k} c_{\mu}^{k+1}  =  
k \sum_{\nu \vdash k-1} c_{\nu}^k =  k \cdot (k-1)! = k!.$$
\end{proof}

 %%%%%%%%%%%%%%%%%%%%%%%%%%%%%%%%%%%%%%%%

\noindent  \textsc{Georgia Benkart} \\
\textit{\small Department of Mathematics, University of Wisconsin-Madison, Madison, WI 53706, USA}\\
\texttt{benkart@math.wisc.edu}\\ 

\noindent \textsc{Samuel A. Lopes} \\
\textit{\small CMUP, Faculdade de Ci\^encias, Universidade do Porto, 
Rua do Campo Alegre 687\\ 
4169-007 Porto, Portugal}\quad 
\texttt{slopes@fc.up.pt}\\

\noindent \textsc{Matthew Ondrus} \\
\textit{\small Mathematics Department,
Weber State University, 
Ogden, Utah, 84408 USA}\\
\texttt{mattondrus@weber.edu}

\end{document}